\theoremstyle{plain}
\newtheorem{theorem}{Theorem}[section]
\newtheorem{corollary}[theorem]{Corollary}
\newtheorem{lemma}[theorem]{Lemma}
\newtheorem{proposition}[theorem]{Proposition}
\newtheorem{assumption}[theorem]{Assumption}
\newtheorem{example}[theorem]{Example}
\theoremstyle{definition}
\newtheorem{definition}{Definition}
\theoremstyle{remark}
\newtheorem{remark}{Remark}
\newcolumntype{L}[1]{>{\raggedright\let\newline\\\arraybackslash\hspace{0pt}}m{#1}}
\newcolumntype{C}[1]{>{\centering\let\newline\\\arraybackslash\hspace{0pt}}m{#1}}
\newcolumntype{R}[1]{>{\raggedleft\let\newline\\\arraybackslash\hspace{0pt}}m{#1}}
\def\cu#1{{\color{black}#1}}
\def\cub#1{{\color{black}#1}}
\def\ag#1{{\color{black}#1}}
\def\ga#1{{\color{black}#1}}
\DeclareMathOperator*{\argmax}{arg\,max}
\DeclareMathOperator{\spn}{span}
\begin{document}
	
	
	
	\title{A Dual Approach for Optimal Algorithms in Distributed Optimization over Networks}
	
	\author{
		\name{C\'esar A. Uribe\textsuperscript{a}$^\ast$\thanks{$^\ast$Corresponding author. Email: cauribe@mit.edu}, Soomin Lee\textsuperscript{b}, Alexander Gasnikov\textsuperscript{c}, and Angelia Nedi\'{c}\textsuperscript{d}}
		\affil{\textsuperscript{a}Laboratory for Information and Decision Systems, and the Institute for Data, Systems, and Society, Massachusetts Institute of Technology,
			Cambridge, Massachusetts, USA; \textsuperscript{b}Yahoo! Research,
			Sunnyvale, California, USA; \textsuperscript{c}Moscow Institute of Physics and Technology, and Institute for Information Transmission, Moscow Oblast, Russia; \textsuperscript{d}School of Electrical, Computer and Energy Engineering, Arizona State University,
			Tempe, Arizona, USA}
		\received{}
	}
	
	\maketitle
	
	\begin{abstract}
		We study dual-based algorithms for distributed convex optimization problems over networks, where the objective is to minimize a sum $\sum_{i=1}^{m}f_i(z)$ of functions over in a network. We provide complexity bounds for four different cases, namely: each function $f_i$ is strongly convex and smooth, each function is either strongly convex or smooth, and when it is convex but neither strongly convex nor smooth. Our approach is based on the dual of an appropriately formulated primal problem, which includes a graph that models the communication restrictions. We propose distributed algorithms that achieve the same optimal rates as their centralized counterparts {(up to constant and logarithmic factors)}, with an additional {optimal} cost related to the spectral properties of the network. Initially, we focus on functions for which we can explicitly minimize its Legendre–Fenchel conjugate, i.e., admissible or dual friendly functions. Then, we study distributed optimization algorithms for non-dual friendly functions, as well as a method to improve the dependency on the parameters of the functions involved. Numerical analysis of the proposed algorithms is also provided.
	\end{abstract}
	
	\begin{keywords}
		Distributed optimization; optimal rates; optimization over networks; convex optimization; primal-dual algorithms
	\end{keywords}
	
	\begin{classcode}90C25 ;  90C30 ; 90C60 ; 90C35\end{classcode}
	
	\section{Introduction}
	
	This paper studies the design of efficient distributed algorithms for the solution of convex optimization problems over networks. The study of distributed algorithms can be traced back to classic papers from the 70s and 80s~\cite{bor82,tsi84,deg74}. The adoption of distributed optimization algorithms on several fronts of applied and theoretical machine learning, robotics, and resource allocation has increased the attention on such methods in recent years \cite{xia06,rab04,kon15,kra13,ned17e}. The particular flexibilities induced by the distributed setup make them suitable for large-scale learning problems involving large quantities of data  \cite{bot10,boy11,aba16,ned16w,ned15}. 

We consider the following optimization problem
\begin{align}\label{main_problem}
\min_{z \in \mathbb{R}^n} \sum\limits_{i=1}^{m}f_i(z),
\end{align}
where the each \cu{$f_i: \mathbb{R}^n \to \mathbb{R} \cup \{+\infty\}$} is a closed convex function known by an agent $i$ only, that represents a node in an arbitrary communication network.
Problem~\eqref{main_problem} is to be solved in a distributed manner by repeated interactions of a set of agents over a static network. 

Initial algorithms for distributed optimization, such as distributed subgradient methods, were shown successful for solving optimization problems in a distributed manner over networks \cite{ned09,ned09b,ram10,ned13}. Nevertheless, these algorithms are particularly slow compared with their centralized counterparts. Recently, distributed methods that achieve linear convergence rates for minimizing a sum of strongly convex and smooth (network) objective functions have been proposed. \cu{For example, \cite{beck2014optimal} studies a distributed resource allocation problem but there the objective functions are decoupled, and the agents couple through the links in a different manner (sharing resources). The agents' coupling in the problem we consider is quite different from that in~\cite{beck2014optimal}, and consequently, our methods and their analysis require more information about the graph \cub{structure. Also,} unlike~\cite{NECOARA2015209}, we explore the explicit dependency on the graph topology.} One can identify three main approaches to the study of distributed algorithms. 

\cub{
\begin{itemize}
		\item In~\cite{ned17}, a new method was proposed where it was shown that $O((m^2 + \sqrt{L/\mu}m)\log \varepsilon^{-1})$ iterations are required to find an $\varepsilon$ solution to the optimization problem when the function is $\mu$-strongly convex and $L$-smooth, and $m$ is the number of nodes in the network. In \cite{jak17}, a unifying approach was proposed, that recovers rate results from several existing algorithms such as those in \cite{shi15,qu17}. This newly proposed general method is able to recover existing rates and achieves an $\varepsilon$ precision in $O(\sqrt{L/ (\mu \lambda_2)} \log \varepsilon^{-1})$ iterations, where $\lambda_2$ is the second largest eigenvalue of the interaction matrix. These results require some minimal information about the topology of the network and provide explicit statements about the dependency of the convergence rate on the problem parameters. Specifically, polynomial scalability is shown with the network parameter for particular choices of small enough step-sizes and even uncoordinated step-sizes are allowed \cite{ned17r}. One particular advantage of this approach is that can handle time-varying and directed graphs. Nevertheless, optimal dependencies on the problem parameters and tight convergence rate bounds are far less understood. 
	\item Secondly,  in~\cite{sun17}, a new analysis technique for the convergence rate of distributed optimization algorithms via a semidefinite programming characterization was proposed. This approach provides an innovative procedure to numerically certify worst-case rates of a plethora of distributed algorithms, which can be useful to fine-tune parameters in existing algorithms based on feasibility conditions of a semidefinite program.
		\item A third approach was recently introduced in~\cite{sca17}, where the first optimal algorithm for distributed optimization problems was proposed. This new method achieves an $\varepsilon$ precision in $O(\sqrt{L/\mu}(1 + \tau / \sqrt{\gamma}) \log \varepsilon^{-1})$ iterations for \mbox{$\mu$-strongly} convex and $L$-smooth problems, where $\tau$ is the diameter of the network and $\gamma$ is the normalized eigengap of the interaction matrix. Even though extra information about the topology of the network is required, the work in \cite{sca17} provides a coherent understanding of the optimal convergence rates and its dependencies on the communication network. The work in~\cite{sca17} is based on the representation of the communication structure as an additional set of linear constraints on the distributed problem to guarantee consensus on the solution, from which a primal-dual method can be applied~\cite{parikh2014block,Zhu2010,khuzani2016distributed,zhang2018distributed,maros2018panda,Simonetto2016}. For example in~\cite{Latafat2016}, the authors develop a new primal-dual algorithm that uses the Laplacian of the communication graph as a set of linear constraints to induce coordination. Moreover, with additional metric subregularity conditions a linear convergence rate is shown. Recently in \cite{scaman2018optimal}, the authors extended the optimality lower bounds from \cite{sca17} to the non-smooth case using the randomized regularization approach~\cite{duchi2012}.
\end{itemize}
}

In this paper, we follow the approach in \cite{sca17} by formulating a dual problem and exploit recent results in the study of convex optimization problems with affine constraints \cite{ani17,dvu16b,gas17} to develop algorithms with provably optimal convergence rates for the cases where each of the objective functions $f_i$ has one the following properties: 
1) it is strongly convex and with Lipschitz continuous gradients; 
2) it is strongly convex and Lipschitz continuous (but not necessarily smooth); 
3) it is convex with Lipschitz continuous gradients, and 4) it is convex and Lipschitz continuous (not necessarily smooth), \cu{see Table~\ref{tab:summary_other}}. \cub{We say that a function $f$ is $M$-Lipschitz if $\|\nabla f(x)\|\leq M$, a function $f$ is $L$-smooth if $\|\nabla f(x) - \nabla f(y) \|_2 \leq L\|x-y\|_2$, a function $f$ is $\mu$-strongly convex ($\mu$-s.c.) if, for all $x, y \in \mathbb{R}^n$, $f(y) \geq f(x) + \langle \nabla f(x) ,y-x\rangle + \frac{\mu}{2}\|x-y\|^2$.}

\begin{table}
	\tbl{Iteration complexity of distributed optimization algorithms. All estimates are presented up to logarithmic factors, i.e. of the order $\tilde O$.}{
		\begin{tabular}{cccccc} \toprule
			{\bf Approach} &{\bf Reference} & \multicolumn{1}{C{1.7cm}}{\bf $\mu$-strongly convex and $L$-smooth}    & \multicolumn{1}{C{1.7cm}}{\bf  $\mu$-strongly convex and \mbox{$M$-Lipschitz}}   & {\bf $L$-smooth}   & {\bf $M$-Lipschitz}   \\ \toprule
Centralized	&\cite{Nemirovskii1983}	& $ \sqrt{\frac{L}{\mu}} $ &$ \frac{M^2}{\mu \varepsilon}  $ &   $ \sqrt{\frac{L}{\varepsilon} }$  &  $ \frac{M^2}{\varepsilon^2} $ \\ \colrule
\multirow{7}{*}{\parbox[c]{2.4cm}{\vspace{-0.9cm} \centering Gradient Computations}} 
&\cite{Qu2017}$^{\text{b}}$ &$m^3 \big(\frac{L}{\mu}\big)^{5/7} $ & $-$ & $ \frac{1}{\varepsilon^{5 / 7}}$ & $-$\\
&\cite{ols14} & $-$ & $-$& $-$& $ m\frac{M^2}{\varepsilon^2} $\\
&\cite{Duchi2012ab} & $-$ & $-$& $-$& { $ m^2\frac{M^2}{\varepsilon^2} $}\\
&\cite{Doan2017} & $m^2 \frac{L}{\mu}$ &$-$ & $-$ & $-$ \\
&\cite{Lakshmanan2008} 		&  $-$&$-$ & $m^3 \frac{L}{\varepsilon} $ & $-$  \\ 
&\cite{Necoara2013}	& $ m^4\frac{L}{\mu}$ &$-$ &   $ m^4\frac{L }{\varepsilon} $  &$-$\\
& \cite{jak17}$^{\text{c}}$ & $ m^2\sqrt{\frac{L}{\mu}}$ &$-$ &$-$ & $-$\\ \colrule
\multirow{2}{*}{\parbox[c]{2.2cm}{\vspace{0.3cm} \centering Communication Rounds}} &\cite{sca17}	&  $ m\sqrt{\frac{L}{\mu}} $ & $-$ & $-$ & $-$\\
&\cite{lan17}& $-$ &$m^2\sqrt{\frac{M^2}{\mu \varepsilon}} $ &   $-$  &$m^2\frac{M}{\varepsilon}$\\
&\cu{\textbf{This paper}}& \cu{$ \boldsymbol{m\sqrt{\frac{L}{\mu}} }$ } &\cu{$\boldsymbol{m\sqrt{\frac{M^2}{\mu \varepsilon}} }$ }&\cu{   $ \boldsymbol{m\sqrt{\frac{L }{\varepsilon} }} $ } &  \cu{ $\boldsymbol{m\frac{M}{\varepsilon}} $}\\ \botrule
		\end{tabular}
	}
\tabnote{\textsuperscript{a}Additionally, it is assumed functions are proximal friendly. No explicit dependence on $L$, $M$ or $m$ is provided.} 
\tabnote{	\textsuperscript{b}An iteration complexity of $\tilde{O}(\sqrt{1 / \varepsilon} ) $ is shown if the objective is the composition of a linear map and a strongly convex and smooth function. Moreover, no explicit dependence on $L$ and $m$ is provided.}
\tabnote{	\textsuperscript{c}A linear dependence on $m$ is achieved if $L$ is sufficiently close to $\mu$.}
	\label{tab:summary_other}
\end{table}

 \cu{Again, our goal is to investigate whether distributed algorithms,
	that use  local agents objective functions and local agents' interactions in a given communication graph, can match the performance of their centralized counterparts. With respect to this goal, the contributions of our work is in the development of the algorithms and their analysis. The major novelty is in the establishment of the results showing that the performance of our distributed algorithms can match the performance of their centralized counterparts, up to a logarithmic factor. These rate results are established for two classes of functions, namely, the functions 
	that are dual friendly and those that are not dual friendly. However, to accomplish these optimal rates (up to a logarithmic scaling) of centralized methods, some common knowledge of certain quantities about the underlying graph is needed. These quantities may require additional information preprocessing in the graph, which we do not investigate here; it is beyond the scope of the paper. Additionally, our convergence rate analysis is useful for the design of the communication graphs, as it indicates how the performance of our methods will depend on various quantities of the graph and the agents' objective functions.}

We provide a sequence of algorithms, to minimize functions from the problem classes described above, that achieve \cub{an $\varepsilon$-approximate solution (c.f. Definition~\ref{def:opt_sol})} for \textit{any} fixed, connected and undirected graph according to Table~\ref{tab:summary_other}, where universal constants, logarithmic terms, and dependencies on the initial conditions are hidden for simplicity. The resulting iteration complexities are given both for the optimality of the solution and the violation of the consensus constraints. Note that for distributed algorithms based on primal iterations these estimates translate to computations of gradients of the local functions for each of the agents. On the other side, in dual based algorithms, the complexity refers to computations of the gradients of the Lagrangian dual function, which translates to the number of communication rounds in the network. Initially, we consider dual friendly or admissible functions. Our results match known optimal complexity bounds for centralized convex optimization (obtained by classical methods such as Nesterov's fast gradient method \textsc{FGM} \cite{nes83}), with an additional cost induced by the network of communication constraints. This extra cost appears in the form of a multiplicative term proportional to the square root of the spectral gap of the interaction matrix. Later we study non-dual friendly functions where the number of gradient computations has to be taken into account. Therefore, we provide complexity bounds in terms of oracle calls, i.e., gradient computations, as well as communication rounds. Moreover, we provide a new algorithm for smooth functions that improves the dependency on the strong convexity parameters of the local functions.

This paper is organized as follows: we start with a couple of preliminary definitions and auxiliary results in Section~\ref{sec:prelim}. Section \ref{sec:problem} introduces the problem of distributed optimization over a network. \cub{Section~\ref{sec:pd_linear} presents a general form of primal-dual analysis of convex problems with linear constraints, this will serve as the basis for our main results. Section \ref{sec:main} provides our main results on the optimal convergence rates for distributed convex minimization problems with a dual friendly structure.} Section \ref{sec:non_dual} provides our main results on the optimal convergence rates for distributed convex minimization problems where an exact solution to the dual subproblem is not available. Section \ref{sec:improve_bound} presents a method to improve the dependency of the convergence rate on the condition number of the function $F$. Discussions are provided in Section~\ref{sec:discussion}.  Section \ref{sec:experiments} provides numerical experiments of the proposed algorithms. Finally, Section~\ref{sec:conclusions} provides some conclusions and points out future work.

	\noindent\textbf{Notation:} 
We will assume that the nodes in the network, also referred \cu{to} as agents, are indexed from $1$ through $m$ (no actual enumeration is needed in the execution of the proposed algorithms; it is only used in our analysis).  
We use the superscripts $i$ or $j$ to denote agent indices and the subscript $k$ to denote the iteration index of an algorithm. We denote by $[A]_{ij}$ the entry of the matrix $A$ in its $i$-th row and $j$-th column, and 
write $I_{n}$ for the identity matrix of size $n$. For a \cub{positive semi-definite matrix} 
$W$, we let $\lambda_{\max}(W)$ be its largest eigenvalue 
and $\lambda_{\min}^+(W)$ be its smallest positive eigenvalue, and we denote its condition number 
by $\chi(W) = \lambda_{\max}{(W)}/ \lambda_{\min}^{+}{(W)}$.
Given a matrix $A$, define $\sigma_{\max}(A)\triangleq \lambda_{\max}(A^TA)$ and
$\sigma_{\min}^+(A) \triangleq \lambda_{\min}^+ (A^TA)$. 
We use $\boldsymbol{1}$ to denote a vector with all entries equal to 1.
We write $\tilde{O}$ to denote a complexity bound that ignores logarithmic factors. We will work in the standard Euclidean norm, denoted by $\|\cdot\|_2$.  
	
	\section{Preliminaries}\label{sec:prelim}
	
	In this section, we provide some definitions and preliminary information that we will use in the forthcoming sections. 

%

If each function $f_i$ in~\eqref{consensus_problem2} 
is $\mu_i$-strongly convex in $x_i$, then $F$ in~\eqref{consensus_problem2} is $\mu$-strongly convex in $x$ with \mbox{$\mu = \min_{1\le i\le m} \mu_i$}. Also, if each $f_i$ is \mbox{$L_i$-smooth}, then $F$ is $L$-smooth with \mbox{$L = \max_{1\le i\le m} L_i$}. In Section~\ref{sec:improve_bound} \cub{we explore} a method to improve the dependency on the condition number $\mu$ from the worst case parameter to the average strong convexity.

We will build the proposed algorithms based on Nesterov's fast gradient method (FGM)~\cite{nes13}. For example, a version of the \textsc{FGM} for a $\mu$-strongly convex and $L$-smooth function $f$ is shown in \cub{Algorithm~\ref{alg:nesterov}}. Note that Theorem~\ref{thm:nesterov} is precisely Theorem $2.2.\cu{3}$ in~\cite{nes13} Other variants of this method can be found in~\cite{nes13,bec09,lan11}.
\begin{algorithm}[t]
	\caption{Nesterov's Constant Step Scheme II.}
	\begin{algorithmic}[1]
		\STATE{Choose $x_0\in \mathbb{R}^n$ and $\alpha_0 \in (0,1)$. Set $y_0 = x_0$ and $q = \frac{\mu}{L}$.}
		\STATE{ $k$th iteration $(k\geq 0)$.
			\begin{enumerate}
				\item[(a)] Compute $\nabla f(y_k)$. Set $ x_{k+1} = y_k - \frac{1}{L}\nabla f(y_k)$.
				\item[(b)] Compute $\alpha_{k+1} \in (0,1)$ from equation  
				$\alpha_{k+1}^2 = (1-\alpha_{k+1})\alpha_k^2 + q \alpha_{k+1}$, \newline
				and set $\beta_k = \frac{\alpha_{k}(1-\alpha_{k})}{\alpha_{k}^2 + \alpha_{k+1}}$, 
				$ y_{k+1} = x_k  + \beta_k (x_{k+1} - x_k)$.
			\end{enumerate}    
		}
	\end{algorithmic}
	\label{alg:nesterov}
\end{algorithm}

\cu{
\begin{theorem}[Adapted from Theorem $2.2.3$ in~\cite{nes13}]\label{thm:nesterov}
	If in Algorithm~\ref{alg:nesterov} one sets $\alpha_{0}$ such that $L(1-\alpha_0) = \alpha_0(\alpha_0L - \mu)$, then Algorithm~\ref{alg:nesterov} generates a sequence $\{x_k\}_{k=0}^{\infty}$ such that
	\begin{align}\label{rate_nest}
	f(x_k) - f^* \leq L\left( 1- \sqrt{\frac{\mu}{L}} \right)^k\|x_0 - x^*\|_2^2, 
	\end{align}
	where $f^*$ denotes the minimum value of the function $f$ over $\mathbb{R}^n$ and $x^*$ is its 
	minimizer. Moreover, Algorithm~\ref{alg:nesterov} is optimal for unconstrained minimization of strongly convex and smooth functions.
\end{theorem}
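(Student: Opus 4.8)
The plan is to follow Nesterov's method of estimate sequences, specialized to the constant-step scheme. I would introduce a family of quadratic functions $\phi_k(x)=\phi_k^\ast+\tfrac{\gamma_k}{2}\|x-v_k\|_2^2$ and scalars $\lambda_k\in(0,1]$ defined by $\lambda_0=1$, $v_0=x_0$, $\phi_0(x)=f(x_0)+\tfrac{L}{2}\|x-x_0\|_2^2$ (so that $\gamma_0=L$), and
\[
\phi_{k+1}(x)=(1-\alpha_k)\phi_k(x)+\alpha_k\!\left(f(y_k)+\langle\nabla f(y_k),x-y_k\rangle+\tfrac{\mu}{2}\|x-y_k\|_2^2\right),\qquad \lambda_{k+1}=(1-\alpha_k)\lambda_k.
\]
The first ingredient is the estimate-sequence inequality $\phi_k(x)\le(1-\lambda_k)f(x)+\lambda_k\phi_0(x)$ for all $x$ and all $k$; this follows by an easy induction whose inductive step uses only $\mu$-strong convexity of $f$ to lower-bound $f(x)$ by its quadratic model at $y_k$, together with the algebraic identities $(1-\alpha_k)(1-\lambda_k)+\alpha_k=1-\lambda_{k+1}$ and $(1-\alpha_k)\lambda_k=\lambda_{k+1}$.

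Next I would expand the quadratics to read off the auxiliary recursions: $\gamma_{k+1}=(1-\alpha_k)\gamma_k+\alpha_k\mu$, an update for the center $v_{k+1}$ as an affine combination of $v_k$, $y_k$ and $\nabla f(y_k)$, and a closed form for $\phi_{k+1}^\ast$ in terms of $\phi_k^\ast$, $f(y_k)$, $\|\nabla f(y_k)\|_2^2$ and $\langle\nabla f(y_k),v_k-y_k\rangle$. The crucial step, and the one I expect to be the main obstacle, is the second induction, $f(x_k)\le\phi_k^\ast$. Granting it at step $k$, one invokes the descent lemma $f(x_{k+1})\le f(y_k)-\tfrac{1}{2L}\|\nabla f(y_k)\|_2^2$ (valid because $x_{k+1}=y_k-\tfrac1L\nabla f(y_k)$ and $f$ is $L$-smooth), bounds $\phi_k^\ast\ge f(x_k)\ge f(y_k)+\langle\nabla f(y_k),x_k-y_k\rangle$ by convexity, substitutes into the closed form for $\phi_{k+1}^\ast$, and checks that the resulting inequality holds as soon as $\gamma_{k+1}\ge L\alpha_k^2$. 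Choosing $\alpha_k\in(0,1)$ as the root of $L\alpha_k^2=(1-\alpha_k)\gamma_k+\alpha_k\mu=\gamma_{k+1}$ makes this an equality and forces the cancellation of the cross terms involving $v_k-y_k$; this pins down $y_k$ as a convex combination of $x_k$ and $v_k$, and, after eliminating $v_k$, yields exactly the updates for $\alpha_{k+1}$, $\beta_k$ and $y_{k+1}$ in Algorithm~\ref{alg:nesterov}.

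It remains to analyze the scalar recursions with $\gamma_0=L$. The stated condition $L(1-\alpha_0)=\alpha_0(\alpha_0 L-\mu)$ is precisely $L\alpha_0^2=(1-\alpha_0)L+\alpha_0\mu$, i.e.\ the defining equation for $\alpha_0$ when $\gamma_0=L$. Since $\mu\le L$ we have $\gamma_0\ge\mu$, and from $L\alpha_k^2=(1-\alpha_k)\gamma_k+\alpha_k\mu\ge(1-\alpha_k)\mu+\alpha_k\mu=\mu$ we obtain, inductively, $\gamma_{k+1}=L\alpha_k^2\ge\mu$ and $\alpha_k\ge\sqrt{\mu/L}$ for every $k$; hence $\lambda_k=\prod_{j=0}^{k-1}(1-\alpha_j)\le(1-\sqrt{\mu/L})^k$. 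Combining the two inductions,
\[
f(x_k)-f^\ast\le\phi_k^\ast-f^\ast\le\lambda_k\big(\phi_0(x^\ast)-f^\ast\big)=\lambda_k\Big(f(x_0)-f^\ast+\tfrac{L}{2}\|x_0-x^\ast\|_2^2\Big),
\]
and bounding $f(x_0)-f^\ast\le\tfrac{L}{2}\|x_0-x^\ast\|_2^2$ by $L$-smoothness gives \eqref{rate_nest}. Finally, the optimality assertion is not established by the argument above: it follows from the classical lower bound showing that on the class of $\mu$-strongly convex, $L$-smooth functions no first-order method can contract the optimality gap faster than a geometric factor of order $1-\sqrt{\mu/L}$ (up to constants in the exponent), a bound attained on a suitable quadratic on $\ell_2$ — see~\cite{nes13} — so Algorithm~\ref{alg:nesterov} is optimal in this class.
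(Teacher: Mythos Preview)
Your argument is correct and follows the standard estimate-sequence proof from Nesterov's book; note, however, that the paper does not supply its own proof of this theorem, but simply states it as a preliminary result adapted from Theorem~2.2.3 in~\cite{nes13}. Since your proposal reproduces precisely that proof (with the specialization $\gamma_0=L$ matching the stated condition on $\alpha_0$), it is consistent with the cited reference.
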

}


	\section{Problem Statement}\label{sec:problem}
	
	Defining the stacked column vector $x = [x_1^T,x_2^T,\hdots,x_m^T]^T \in \mathbb{R}^{mn}$ allows us to rewrite problem~\eqref{main_problem} in an equivalent form as follows:
\begin{align}\label{equiv_main_problem}
\min_{x_1 = \hdots =x_m} {F(x)} & \qquad 
\text{where } \qquad F(x)\triangleq\sum_{i=1}^{m}f_i(x_i).
\end{align}

The distributed optimization framework assumes we want to solve problem~\eqref{equiv_main_problem} in a distributed manner over a network. 
We model such a network as a fixed connected undirected graph $\mathcal{G} = (V,E)$, where 
$V$ is the set of $m$ nodes, and $E$ is the set of edges. We assume that the graph $\mathcal{G}$ does
not have self-loops.
The network structure imposes information constraints; specifically, each node $i$ 
has access to the function $f_i$ only and a node can exchange information only with its immediate neighbors, i.e., a node $i$ can communicate with node $j$ if and only if $(i,j)\in E$.

The communication constraints imposed by the network can be represented as
a set of equivalent constraints via the Laplacian  
$\bar W{\in \mathbb{R}^{m\times m}}$ of the graph $\mathcal{G}$, defined as: $[\bar W]_{ij} = -1$ if $(i,j) \in E$, $[\bar W]_{ij} = \text{deg}(i)$ if $i= j$, and $[\bar W]_{ij} = 0$ otherwise,
where $\text{deg}(i)$ is the degree of the node $i$, i.e., the number of neighbors of the node. Moreover, define the communication matrix (also referred to as an interaction matrix) 
by \mbox{$W \triangleq\bar W \otimes I_n$}, where $\otimes$ indicates the Kronecker product.

Throughout this paper, {\it we assume the graph $\mathcal{G} = (V,E)$ is connected and undirected}.
Under this assumption, the Laplacian matrix $\bar W$ is symmetric and positive semi-definite. Furthermore,
the vector $\boldsymbol{1}$ is the unique (up to a scaling factor) eigenvector associated with the eigenvalue
$\lambda=0$. One can verify that $W$ inherits all the properties of $\bar W$, i.e., it is a symmetric positive semi-definite matrix and it satisfies the following relations: $W{x} = 0$ if and only if {$x_1 = \hdots = x_m$}, $\sqrt{W}{x} = 0$ if and only if {$x_1 = \hdots = x_m$}, and $\sigma_{\max} (\sqrt{W}) = \lambda_{\max} (W)$. Therefore, one can equivalently rewrite problem~\eqref{equiv_main_problem} as follows:
\begin{align}\label{consensus_problem2}
\min_{\sqrt{W} x=0} F(x) & \qquad 
\text{where } \qquad F(x) \triangleq\sum_{i=1}^{m}f_i(x_i).
\end{align}

Note that the constraint sets $\{x\mid \sqrt{W} x=0\}$ and $\{x\mid x_1 = \hdots  = x_m\}$ are equal. This is because $\ker (\sqrt{W}) = \spn(\boldsymbol{1})$ due to  $\mathcal{G}$ being connected. A similar idea of writing the Laplacian as a product of a matrix $B$ and its transpose has been employed in~\cite{Latafat2016} using the incidence matrix.

Our results provide convergence rate estimates for the solution of the problem in~\eqref{main_problem} for four different cases in terms of the properties of the function \newline \mbox{$F(x)=\sum_{i=1}^m f_i(x_i)$}.
\begin{assumption}\label{assumptions}
	Consider a function $F(x)=\sum_{i=1}^m f_i(x_i)$, and assume:
	\begin{enumerate}
		\item[(a)] Each $f_i$ is $\mu_i$-strongly convex and $L_i$-smooth, thus $F$ is $\mu$-strongly convex and $L$-smooth;
		\item[(b)] each $f_i$ is $\mu_i$-strongly convex and $M_i$-Lipschitz on a ball around the optimal point $x^*$ with radius \cub{$\|x^*-x^0\|_2$, where $x^0$ is an initialization point}, thus $F$ is \mbox{$\mu$-strongly} convex and $M$-Lipschitz on the same bounded set;
		\item[(c)] each $f_i$ is convex and $L_i$-smooth, thus $F$ is convex and $L$-smooth;
		\item[(d)] each $f_i$ is convex and $M_i$-Lipschitz on a ball around the optimal point $x^*$ with radius \cub{$\|x^*-x^0\|_2$, where $x^0$ is an initialization point}, thus $F$ is convex and \mbox{$M$-Lipschitz} on the same bounded set;
	\end{enumerate}
	where $\mu=\min_{1\le i\le m}\mu_i$, $L=\max_{1\le i\le m}L_i$ and \mbox{$M=\max_{1\le i\le m}M_i$}.
\end{assumption}

	\begin{example}


\begin{figure}
	\centering
	\subfigure[Cycle graph.]{%
		\includegraphics[width=0.21\textwidth]{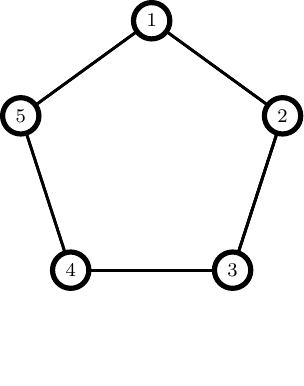}}\hspace{50pt}
	\subfigure[Erd\H{o}s-R\'enyi random graph.]{%
		\includegraphics[width=0.3\textwidth]{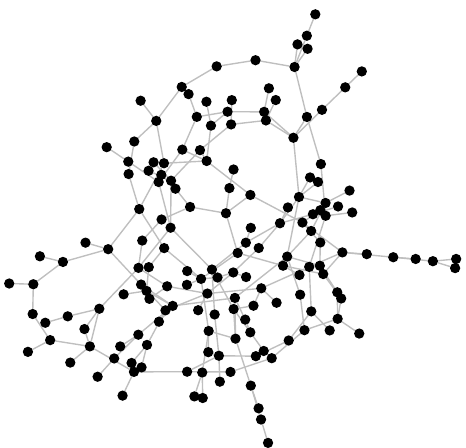}}
	\caption{Two examples of networks of agents. (a) A cycle graph with $5$ agents. (b) An Erd\H{o}s-R\'enyi random graph with $160$ agents.} \label{fig:network}
\end{figure}
	
	Consider a network of agents as shown in Figure~\ref{fig:network}, where the agents in the network seek to cooperatively solve the following regularized linear regression problem
	\begin{align}\label{ex:cost1}
	\min_{z \in \mathbb{R}^n} \frac{1}{2ml}\|Hz - b\|_2^2 + \frac{1}{2}c \|z\|_2^2,
	\end{align}
	where $b \in \mathbb{R}^{ml}$, $H \in \mathbb{R}^{ml \times n}$ and $c>0$ is some constant. Furthermore, assume the data in $b$ and $H$ are distributed over the network, where no single agent has full access to the complete information, i.e., each agent has access to a subset of points such that
	\begin{align*}
	b^T & = [\underbrace{b_1^T }_{\text{Agent} \ 1}\mid \underbrace{b_2^T}_{\text{Agent} \ 2}\mid \cdots \mid \underbrace{b_m^T}_{\text{Agent}  \ m}],
\ \ \ \text{and} \ \ \
	H^T = [\underbrace{H_1^T }_{\text{Agent} \ 1}\mid \underbrace{H_2^T}_{\text{Agent} \ 2}\mid \cdots \mid \underbrace{H_m^T}_{\text{Agent}  \ m}],
	\end{align*}
	where $b_i \in \mathbb{R}^l$ and $H_i \in \mathbb{R}^{l\times n}$ for each $i$.	Therefore, problem~\eqref{ex:cost1} is equivalent to 
	\begin{align}\label{ex:ridge}
	\min_{\sqrt{W}x =0} \sum_{i=1}^{m} \left( \frac{1}{2}\frac{1}{ml}\|b_i - H_ix_i\|_2^2 + \frac{1}{2}\frac{c}{m} \|x_i\|_2^2\right),
	\end{align}
	where $W = \bar W \otimes I_n$. Particularly, for the cycle graph network of $5$ agents shown in Figure \ref{fig:network}(a), agent $1$ can share information with agents $2$ and $5$, agent $5$ shares information with agents $1$ and $4$, and similarly for the other agents. 
	
	
\end{example}
	
	\section{Primal-Dual Analysis for Convex Problems with Linear Constraints}\label{sec:pd_linear}
	
	In this section, we consider a generic optimization problem with linear constraints. We will use the convergence results in~\eqref{rate_nest} to obtain some fundamental insights for the distributed optimization problem. Moreover, we will derive the results for a corresponding distributed algorithm for solving problem~\eqref{consensus_problem2}. The main idea of our analysis will be to explore the case when the linear constraints $Ax=0$ represent the network communication constraints as $\sqrt{W}x=0$ and the function $f(x)$ corresponds to the network function $F(x)$ as defined in~\eqref{consensus_problem2}. 

Initially, consider a $\mu$-strongly convex and an $L$-smooth function $f$ 
to be minimized over a set of linear constraints, i.e.,
\begin{align}\label{main2}
\min_{Ax=0} f(x).
\end{align} 

Assume that problem~\eqref{main2} is feasible, in which case a unique solution exists, denoted by $x^*$. However, we will be interested in finding approximate solutions of~\eqref{main2} that attain a function value arbitrarily close to the optimal value and have arbitrarily small feasibility violation
of the linear constraints. For this, we introduce the following definition.

\begin{definition}\cite{lan17}\label{def:opt_sol}
	A point $x \in \mathbb{R}^{mn}$ is called an $(\varepsilon,\tilde{\varepsilon})$-solution of~\eqref{main2} if the following conditions are satisfied: $
	f(x) - f^*  
	\le  \varepsilon$, and $\|Ax\|_2 \leq \tilde{\varepsilon}$,
	where $f^* = f(x^*)$ denotes the optimal value for the primal problem in~\eqref{main2}. 
\end{definition}


The Lagrangian dual of~\eqref{main2} is given by
\begin{align}\label{eq:laggrange_dual}
\min_{Ax=0} f(x) & = \max_y \left\lbrace \min_{{x}} \left\{ f(x) - \left\langle A^Ty,x \right\rangle \right\} \right\rbrace.
\end{align}
Moreover, \eqref{eq:laggrange_dual} can be re-formulated as an equivalent minimization problem, as follows:
\begin{align}\label{dual_problem22}
\min_y  \varphi(y) \quad \text{where} \quad  \varphi(y) \triangleq  \max_x \Psi(x,y),
\end{align}
and $
\Psi(x,y) \triangleq  \left\langle   A^Ty,x \right\rangle -f(x)$.

The function 
$\varphi(y)$ is $\mu_\varphi$-strongly convex on $\ker(A^T)^{\perp}$ with
$\mu_\varphi = \lambda_{\min}^{+}(A^TA)/L$. Moreover,  it has 
$L_\varphi$-Lipschitz continuous gradients with $L_\varphi= \lambda_{\max}(A^TA)/\mu$, see Lemma~$3.1$ in \cite{Beck2014}, Proposition $12.60$ in~\cite{rockafellar2011variational}, Theorem $1$ in~\cite{nes05}, Theorem~$6$ in~\cite{Kakade2009a}.
Additionally, from Demyanov--Danskin's theorem 
(see, for example, Proposition $4.5.1$ in \cite{Bertsekas2003}),  
it follows that 
\ag{
\begin{align}\label{eq:grad}
\nabla \varphi(y) = Ax^*(A^Ty),
\end{align}}
where $x^*(A^Ty)$ denotes the unique solution 
\cub{of} the inner maximization problem 
\begin{align}\label{eq:xstary}
x^*(A^Ty) & = \argmax_x \Psi(x,y).
\end{align}
Also known as the \textit{sharp}-operator~\cite{yurtsever2015universal}.

We will call $x^*$ the minimizer of~\eqref{main2}. On the other hand, we denote $x^*(A^Ty)$ as the solution of~\eqref{eq:xstary} for a given value $A^Ty$. Particularly, $x^*(0) = \argmax_x \left\lbrace -f(x)\right\rbrace $. Moreover, there is no duality gap between the primal problem in~\eqref{main2} and its dual problem in \eqref{dual_problem22}, and 
the dual problem has a solution $y^*$ (see for example, Proposition $6.4.2$ in~\cite{Bertsekas2003}).
Thus, it holds that $x^* = x^*(A^Ty^*)$.
In general, the dual problem in~\eqref{dual_problem22} can have multiple solutions of the form 
$y^* + \ker(A^T)$ when the matrix $A$ does not have a full row rank, for example when $A$ is the Laplacian of a graph. If the solution is not unique, 
then  $y^*$ denotes the {\it smallest norm} solution, and we let $R$ be its norm, i.e.
$R = \|y^*\|_2$. 



\cu{ Note that we typically do not know $R$ or $R_x$. Thus, we require a method to estimate the strong convexity parameter, which is challenging \cite{nes07,odo15}. Some recent work have explored restarting techniques to reach optimal convergence rates when the strong convexity parameters are unknown \cite{odo15,iou14}. Similarly, a generalization of the \textsc{FGM} algorithm can be proposed when the smoothness parameter is unknown \cite{gas16b}. However, the effect of restarting in the distributed setup requires further study and is out of the scope of this paper. Additionally, parameters such as  $\lambda_{\min}^{+}(W)$, $\lambda_{\max}(W)$ can be efficiently computed in a distributed manner~\cite{TRAN20145526}. Moreover, the values of~$\mu$ and $L$ can be shared with simple max-consensus which is guaranteed to converge in finite time~\cite{maxcon}.}

In the next Lemma, we provide an auxiliary condition to check whether a point $x$ is an  $(\varepsilon,\tilde{\varepsilon})$-solution in terms of the properties of the dual function $\varphi$.
\begin{lemma}[Lemma $1$ in \cite{gas16b}]
	Let $\langle y, \nabla \varphi (y) \rangle \leq \varepsilon $ and $\|\nabla \varphi (y)\|_2 \leq \tilde{\varepsilon}$. Then, $x^*(A^Ty)$ is an $(\varepsilon,\tilde{\varepsilon})$-solution of~\eqref{main2}.
	\label{lemma:sasha}
\end{lemma}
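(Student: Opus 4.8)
The plan is to unwind the definitions and use the gradient formula~\eqref{eq:grad} together with the two hypotheses to verify the two conditions in Definition~\ref{def:opt_sol} directly at the point $x = x^*(A^Ty)$. First I would recall from~\eqref{eq:grad} that $\nabla\varphi(y) = A\,x^*(A^Ty)$, so the second hypothesis $\|\nabla\varphi(y)\|_2 \le \tilde\varepsilon$ is literally the statement $\|A\,x^*(A^Ty)\|_2 \le \tilde\varepsilon$, which is exactly the feasibility-violation condition required of an $(\varepsilon,\tilde\varepsilon)$-solution. So the second condition is immediate and requires no work beyond citing~\eqref{eq:grad}.

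The substance is in the first condition, $f(x^*(A^Ty)) - f^* \le \varepsilon$. Here I would exploit weak duality: for any $y$, $\varphi(y) \ge -f^* $ wait — more precisely, since $\varphi(y) = \max_x\{\langle A^Ty,x\rangle - f(x)\} = -\min_x\{f(x) - \langle A^Ty,x\rangle\}$ and the inner min is a lower bound on the constrained optimum $f^*$ (because on the feasible set $Ax=0$ the linear term vanishes), we get $-\varphi(y) \le f^*$, i.e. $\varphi(y) \ge -f^*$. Next, by definition of $x^*(A^Ty)$ as the maximizer in $\Psi(x,y)$, we have $\varphi(y) = \langle A^Ty, x^*(A^Ty)\rangle - f(x^*(A^Ty)) = \langle y, A x^*(A^Ty)\rangle - f(x^*(A^Ty)) = \langle y, \nabla\varphi(y)\rangle - f(x^*(A^Ty))$, using~\eqref{eq:grad} once more. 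Rearranging,
\[
f(x^*(A^Ty)) - f^* = \langle y, \nabla\varphi(y)\rangle - \varphi(y) - f^* \le \langle y, \nabla\varphi(y)\rangle \le \varepsilon,
\]
where the first inequality is weak duality $-\varphi(y) - f^* \le 0$ and the last is the first hypothesis. This closes the first condition.

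The main obstacle — really the only point requiring care — is getting the sign conventions and the direction of weak duality right, since the paper writes the dual as a minimization of $\varphi(y) = \max_x\Psi(x,y)$ with $\Psi(x,y) = \langle A^Ty,x\rangle - f(x)$, which flips the usual sign of the Lagrange multiplier term; one must check that $\varphi(y) \ge -f^*$ holds in this formulation (it does, by substituting any feasible $x$ and noting $\langle A^Ty,x\rangle = \langle y, Ax\rangle = 0$, so $\Psi(x,y) = -f(x) \ge -f^*$ is false in general — rather $\max_x\Psi(x,y) \ge \Psi(x^*,y) = -f(x^*) = -f^*$). Everything else is substitution of~\eqref{eq:grad} into the identity $\varphi(y) = \langle A^Ty, x^*(A^Ty)\rangle - f(x^*(A^Ty))$ and the two hypotheses. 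No smoothness or strong convexity of $f$ beyond what guarantees existence and uniqueness of $x^*(A^Ty)$ is needed in this lemma.
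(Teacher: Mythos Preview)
Your argument is correct. The paper does not actually supply its own proof of this lemma; it simply cites it as Lemma~1 in~\cite{gas16b}, so there is nothing to compare against beyond noting that your weak-duality argument is the standard one. Your derivation is exactly right: the feasibility bound is immediate from $\nabla\varphi(y)=Ax^*(A^Ty)$, and for the function-value bound you correctly use $\varphi(y)=\langle y,\nabla\varphi(y)\rangle - f(x^*(A^Ty))$ together with $\varphi(y)\ge \Psi(x^*,y)=-f^*$ (weak duality at the primal optimum, where $Ax^*=0$) to obtain $f(x^*(A^Ty))-f^*\le \langle y,\nabla\varphi(y)\rangle\le\varepsilon$. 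Your self-correction on the weak-duality step is also right: the inequality comes from plugging in the constrained optimizer $x^*$, not an arbitrary feasible point.
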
 

\cu{
\begin{remark}
	From a technical perspective, dual smoothing with linear constraints is not new. However, the main technical contribution of our approach is the elimination of the assumption that the target function has bounded variation on the admissible set (that is typically unbounded). \cub{That is, for a function $f(\cdot)$ defined on a set $Q$, it holds that $\max_{x,y \in Q} \{f(x) -f(y)\} \leq \Delta < \infty$~\cite{gas16b,ani17}, or a \ag{weaker condition where if we define $\Delta(f)=\max_{x,\ag{y}\in Q}\{\nabla f(x)[y-x]\}$, where $\nabla f(x)[y-x]$ is the directional derivative of $f$ at a point $x$ in the direction $y-x$ \cite{dev12}}, then $\|x^*\|\leq \Delta(f)/r$, with $r$ is the radius of a ball inside $Q$, see \cite[Theorem 6.1]{dev12}.} One can verify that this assumption significantly required in the previous dual smoothing (regularized) works. We show how to obtain all the results without such a restrictive assumption. As far as we known this is the first time such strong condition is removed.
\end{remark}
}

In what follows, we will apply the bound for the \textsc{FGM} algorithm in~\eqref{rate_nest} on the dual problem~\eqref{dual_problem22}, which is not strongly convex in the ordinary sense (on the whole space), see Algorithm~\ref{alg:nesterov_dual}. 
However, by choosing
$y_0 = x_0=0$ in Algorithm~\ref{alg:nesterov} as the initial condition, 
the algorithm applied to the dual problem will produce iterates that lie 
in the linear space of gradients 
$\nabla \varphi(y)$, which are of the form $Ax$ for $x=x^*(A^Ty)$. In this case, the dual function $\varphi(y)$ will be strongly convex when $y$ is restricted to the linear space spanned by the range of the matrix $A$.

\cub{\begin{algorithm}[t]
	\caption{Nesterov's Constant Step Scheme II for the dual problem.}
	\begin{algorithmic}[1]
		\STATE{\cub{Choose $y_0\in \mathbb{R}^n$ and $\alpha_0 \in (0,1)$. Set $\tilde{y}_0 = y_0$ and $q = {\mu_\varphi}/{L_\varphi}$.}}
		\STATE{ \cub{$k$th iteration $(k\geq 0)$.}
			\begin{enumerate}
				\item[(a)] \cub{Compute $\nabla \varphi(y_k)$. Set $ y_{k+1} = \tilde y_k - \frac{1}{L_\varphi}\nabla \varphi(\tilde y_k)$.}
				\item[(b)] \cub{Compute $\alpha_{k+1} \in (0,1)$ from equation  
				$\alpha_{k+1}^2 = (1-\alpha_{k+1})\alpha_k^2 + q \alpha_{k+1}$, \newline
				and set $\beta_k = \frac{\alpha_{k}(1-\alpha_{k})}{\alpha_{k}^2 + \alpha_{k+1}}$, 
				$ \tilde y_{k+1} = y_k  + \beta_k (y_{k+1} - y_k)$.}
			\end{enumerate}    
		}
	\end{algorithmic}
	\label{alg:nesterov_dual}
\end{algorithm}
}

Note we are now required to compute explicitly $x^*(A^Ty)$, which is the solution of the inner maximization problem~\eqref{eq:xstary}. Section~\ref{sec:main} will present the proposed algorithms and convergence rates, for the different convexity and smoothness assumptions on the functions $f_i$ expressed in Assumption~\ref{assumptions}, when an explicit solution to the inner maximization problem~\eqref{eq:xstary} is available. We will denote this scenario as dual-friendly functions. Particularly, to find $x^*(A^Ty)$ one can use optimal (randomized) numerical methods~\cite{nes13,nes94,bub15}. Later in Section~\ref{sec:non_dual}, we extend the results of Section~\ref{sec:main} to the case where only approximate solutions to~\eqref{eq:xstary} can be computed. 

\begin{definition}\label{def:dual_friendly}
	A function $f(x)$ is \textit{dual-friendly} if, for any $y$, 
	a solution  $x^*(A^Ty)$ in~\eqref{eq:xstary} can be computed ``efficiently" (e.g., by a closed form or by polynomial time algorithms). Sometimes this function are also called \textit{admissible}~\cite{hiriart2012fundamentals,Raginsky2012}, or Fenchel tractable~\cite{Dunner:2016}.
\end{definition}

Several authors have previously studied the primal-dual approach we will follow in this paper. In~\cite{yurtsever2015universal}, the authors propose a universal framework for the study of constrained optimization problems. Leveraging on the Fenchel operator, the authors provide an algorithm that can optimally adapt to an unknown H\"older continuity parameters. In~\cite{Dunner:2016}, the authors provide an algorithm-free generic framework that provides convergence rate certificates for primal-dual algorithms. In~\cite{tran2015splitting,tran2014constrained}, the authors study a model-based gap and split-gap reduction techniques that simultaneously updates the primal and dual smoothness parameters for the convergence to the duality gap at optimal rates. In contrast, we focus our analysis on the recent results in accelerated primal-dual approaches that allows us to obtain optimal rates for larger classes of functions and quantify the constraint violation and distance to optimality~\cite{gas16b,dvu16b}.

Examples of optimization problems for which Definition \ref{def:dual_friendly} holds can be found in the literature, e.g., the entropy-regularized optimal transport problem \cite{cut16}, the entropy linear programming problem \cite{gas16b} or the ridge regression. Note that by definition, finding a solution for the problem~\eqref{eq:xstary} corresponds to finding a maximizing point of the Legendre transformation $f^*$ of the function $f$, where $f^*$ is defined as
\begin{align*}
f^*(y) & = \max_x \left\lbrace  \langle x, y \rangle -f(x) \right\rbrace.
\end{align*}
Moreover, if the conjugate dual function is available, then the maximizing argument is $
x^*(A^Ty) = \nabla f^*(y)$, see Proposition $8.1.1$ in \cite{Bertsekas2003}. For example, for the ridge regression problem~\eqref{ex:ridge},
the maximizing argument in~\eqref{eq:xstary} can be explicitly computed as $x^*(A^Ty)  = (H^TH)^{-1}(A^Ty +H^Tb)$.

Another example where one can find an explicit solution to the auxiliary dual problem is the Entropy Linear Program (ELP) \cite{ani15}, i.e.,
\begin{align}\label{prob:ELP}
\min_{x \in S_n(1), Ax = 0} \sum\limits_{j=1}^{n}x_j \log\left(\frac{x_j}{q_j} \right),  
\end{align}
where $S_n(1) = \{ x \in \mathbb{R}^n : x_j \geq 0 ; j=1,2,\hdots,n; \sum_{j=1}^{n}x_j = 1 \}$ is a unit simplex in $\mathbb{R}^n$ and $q \in S_n(1)$. The maximizing argument~\eqref{eq:xstary} for problem~\eqref{prob:ELP} can be explicitly computed as
\begin{align}
[x^*(A^Ty)]_i & = \frac{q_i \exp ([A^Ty]_i)}{\sum_{j=1}^{n} q_j \exp ([A^Ty]_j)}.
\end{align}

Additional examples related to optimal transport problems of computation of Wasserstein barycenter can be found in \cite{cut16,Uribe2018,Dvurechensky2018a}.  

\cub{
\begin{remark}
	Note that the particular example of Entropy Linear Program in~\eqref{prob:ELP}, the function is not defined on $\mathbb{R}^n$ as in~\eqref{main_problem}. In general, we can define the function $f_i: \mathcal{E}\to \mathbb{R}$, and the optimization space to be constrained to $x\in Q$, where $\mathcal{E}$ is some finite-dimensional vector space and $Q$ is a simple closed convex set. In that case, the matrix $A$ is defined as a linear operator from $E$ to another finite-dimensional real vector space $H$, with $0 \in H$. Moreover, strong convexity should be defined on $Q$ with respect to some chosen norm $\|\cdot\|_\mathcal{E}$, which implies that any subgradient of $f_i$ is an element of the dual space $\mathcal{E}^*$, with dual norm $\|\cdot\|_{\mathcal{E}^*}$.
	\begin{align*}
	\|g\|_{\mathcal{E}^*} & = \max_{\|x\|_\mathcal{E} \leq 1} \langle g,x \rangle.
	\end{align*}
	Similarly, for the linear operator $A: \mathcal{E}_1 \to \mathcal{E}_2$, $\lambda_{\max}$ and should be appropriately defined as
	\begin{align*}
	\lambda_{\max}(W) & = \|A\|_{\mathcal{E}_1 \to \mathcal{E}_2} = \max_{x\in \mathcal{E}_1, u \in \mathcal{E}_2} \left\lbrace \langle u, Ax \rangle : \|x\|_{\mathcal{E}_1} = 1, \|u\|_{\mathcal{E}_2^*} = 1 \right\rbrace,
	\end{align*} 
	and similarly for $\lambda_{\min}^+$. However, for simplicity of exposition, we will provide our results in the Euclidean space.
\end{remark}
}

\begin{remark}
	\cub{Without loss of generality, we assume that the optimal points $x^*\neq 0$ and $y^* \neq 0$. This facilitates notation, and simplifies analysis as we initialize our algorithms at the $0$ point, and our bounds will depend on the distance between the initial point and the optimal point. }
\end{remark}

	\section{Dual Friendly Functions: Algorithms and Iteration Complexity Analysis}\label{sec:main}
	
	In this section, we provide our main results about the communication complexity considering each function class in Assumption~\ref{assumptions}. For each case, we present an algorithm and the minimum number of iterations required fo the algorithm to reach an approximate solution of~\eqref{consensus_problem2}. 

\subsection{Sums of Strongly Convex and Smooth Functions}

In this subsection, we analyze the distributed optimization problem in \eqref{equiv_main_problem} when Assumption \ref{assumptions}(a) holds, i.e., each function $f_i$ is strongly convex and smooth. 

Initially, consider step (a) in Algorithm~\ref{alg:nesterov_dual}, and replace $A = \sqrt{W}$, thus
	\begin{align}\label{eq:nes_sqrt}
	y_{k{+}1} &= \ag{\tilde{y}_k {-} \frac{1}{L_\varphi}\nabla\varphi(\tilde{y}_k) = } \tilde{y}_k {-} \frac{1}{L_\varphi} \sqrt{W}x^*(\sqrt{W}^T\tilde{y}_k).
	\end{align}  
Unfortunately,~\eqref{eq:nes_sqrt} cannot be executed over a network in a distributed manner because the sparsity pattern of the matrix $\sqrt{W}$ need not be compliant with the graph $\mathcal{G}$ in the same way the matrix $W$ is. Therefore, we make the following change of variables: $\sqrt{W}y_k = z_k$ and \mbox{$\sqrt{W}\tilde y_k = \tilde z_k$}, resulting in an algorithm that can be executed in a distributed manner. Interaction between agents is dictated by the term $Wx^*(\tilde z_k)$ which depends only on local information. As a result, each agent $i$ in the network has its local variables $z_k^i$ and $\tilde z_k^i$, and to compute their value at the next iteration, it only requires the information sent by the neighbors defined by the communication graph $\mathcal{G}$. Additionally, the dual subproblem can be computed in a distributed manner at node $i$ as
\begin{align}\label{eq:inner_problem}
x^*_i(\tilde z^i_k) & = \argmax_{x_i} \left\lbrace \left\langle \tilde z^i_k,x_i \right\rangle {-}f_i(x_i) \right\rbrace.
\end{align}

Next, we formally state the \textsc{FGM} algorithm applied to the dual of problem \eqref{equiv_main_problem} with the change of variable that \cub{allows} a distributed execution.

\begin{algorithm}[t]
	\caption{Distributed \textsc{FGM} for strongly convex and smooth problems}
	\begin{algorithmic}[1]
		\STATE{All agents set $z_0^i = \tilde{z}_0^i =0 \in \mathbb{R}^n$, $q =\frac{\mu}{L}\frac{\lambda^{{+}}_{\min}(W)}{\lambda_{\max}(W)}$, $\alpha_0$ solves $\frac{\alpha_0^2 {-}q}{1{-}\alpha_0} = 1$ and $N$.}
		\STATE{For each agent $i$}
		\FOR{ $k=0,1,2,\cdots,N{-}1$ }
		\STATE{$x^*_i(\tilde z^i_k)  = \argmax\limits_{x_i} \left\lbrace \left\langle \tilde z^i_k,x_i \right\rangle {-}f_i(x_i) \right\rbrace$}
		\STATE{Share $x^{*}_i(\tilde z_k^i)$ with neighbors, i.e. $\{j \mid (i,j) \in E \}$.}
		\STATE{{ $z_{k{+}1}^i = \tilde z_k^i {-} \frac{\mu}{\lambda_{\max}(W)} \sum_{j=1}^{m} W_{ij} x^{*}_j(\tilde z_k^j)$}}
		\STATE{Compute $\alpha_{k{+}1}\in (0,1)$ from $\alpha_{k{+}1}^2 = (1 {-} \alpha_{k{+}1})\alpha_{k}^2 {+}q \alpha_{k{+}1}$ and set $\beta_k = \frac{\alpha_k(1{-}\alpha_k)}{\alpha_{k}^2 {+} \alpha_{k{+}1}}$}
		\STATE{{ $\tilde z_{k{+}1}^i  = z_{k{+}1}^i {+} \beta_k(z_{k{+}1}^i {-} z_k^i)$}}
		\ENDFOR
	\end{algorithmic}
	\label{alg:case1}
\end{algorithm}

\cu{
\begin{remark}
	Note that the networked communication between agents is executed in Line~$5$ of Algorithm~\ref{alg:case1}. In order for an agent $i$ to execute line $6$, it needs to have access to $x^{*}_j(\tilde z_k^j)$ for the non{-}zero entries of $W_{ij} $.  Sharing with neighbors refers to the action of an agent $i$ sending a value it holds in its local memory, to the set of neighbors it can communicate with according to the network structure, i.e., $\{j \mid (i,j) \in E \}$. \cu{Also, note that the sequences $\{\alpha_k\}$ and $\{\beta_k\}$ can be computed independently by each agent and coordination is not needed.}  
\end{remark}
}

Algorithm~\ref{alg:case1} requires the number of iterations $N$, which effectively corresponds to the number \cub{of} communication rounds. We define a communication round as an iteration of Algorithm~\ref{alg:case1} where every node shares its local estimates with its neighbors and updates its local variables. Thus, we are interested in finding a lower bound on $N$ such that we can guarantee certain optimality of the local solutions in the sense of Definition \ref{def:opt_sol}. Next, we state our main result regarding the number of iterations required by Algorithm \ref{alg:case1} to reach an approximate solution of problem~\eqref{equiv_main_problem}.

\begin{theorem}\label{thm:case1} 
	Let $F(x)$ be dual friendly and Assumption~\ref{assumptions}(a) hold. 
	For any $\varepsilon >0$, the output $x^*(z_N)$ of Algorithm~\ref{alg:case1} is an $(\varepsilon,\varepsilon/R )${-}solution of~\eqref{consensus_problem2} for 
	\begin{align*}
	N & \geq 2\sqrt{\frac{L}{\mu}\chi(W)} \log \left(\frac{2\sqrt{2} \lambda_{\max}(W) R^2}{ \mu \cdot \varepsilon}  \right),
	\end{align*}
	where $R = \|y^*\|_2$, and $\chi(W) = \lambda_{\max}{(W)}/ \lambda_{\min}^{{+}}{(W)}$. 
\end{theorem}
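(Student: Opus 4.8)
The plan is to apply Nesterov's fast gradient method (Algorithm~\ref{alg:nesterov_dual}, whose guarantee is \eqref{rate_nest} via Theorem~\ref{thm:nesterov}) to the dual problem \eqref{dual_problem22} with $A=\sqrt{W}$, and then use Lemma~\ref{lemma:sasha} to translate a bound on $\varphi(y_N)-\varphi^*$ into an $(\varepsilon,\varepsilon/R)$-solution of \eqref{consensus_problem2}. First I would record the relevant constants: by the discussion after \eqref{dual_problem22}, $\varphi$ restricted to $\ker(\sqrt{W}^T)^\perp = \mathrm{range}(\sqrt{W})$ is $\mu_\varphi$-strongly convex with $\mu_\varphi = \lambda_{\min}^+(W)/L$ (using $\sigma_{\min}^+(\sqrt{W}) = \lambda_{\min}^+(W)$) and $L_\varphi$-smooth with $L_\varphi = \lambda_{\max}(W)/\mu$. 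Since the algorithm is initialized at $y_0=0$ and every gradient $\nabla\varphi(\tilde y_k) = \sqrt{W}x^*(\sqrt{W}^T\tilde y_k)$ lies in $\mathrm{range}(\sqrt{W})$, all iterates stay in that subspace, so Theorem~\ref{thm:nesterov} applies with the effective condition number $L_\varphi/\mu_\varphi = \chi(W)\cdot L/\mu$, giving $\varphi(y_N)-\varphi^* \le L_\varphi\big(1-\sqrt{\mu_\varphi/L_\varphi}\big)^N\|y_0-y^*\|_2^2 = L_\varphi(1-1/\sqrt{\chi(W)L/\mu})^N R^2$, where $R=\|y^*\|_2$ is the smallest-norm dual solution (the one lying in $\mathrm{range}(\sqrt{W})$).

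Next I would convert the functional suboptimality into the two quantities appearing in Lemma~\ref{lemma:sasha}, namely $\langle y_N,\nabla\varphi(y_N)\rangle$ and $\|\nabla\varphi(y_N)\|_2$. By $\mu_\varphi$-strong convexity of $\varphi$ on the relevant subspace, $\varphi(y_N) - \varphi^* \ge \tfrac{\mu_\varphi}{2}\|y_N - y^*\|_2^2$, so $\|y_N - y^*\|_2^2 \le \tfrac{2}{\mu_\varphi}(\varphi(y_N)-\varphi^*)$; combining with $L_\varphi$-smoothness, $\|\nabla\varphi(y_N)\|_2 = \|\nabla\varphi(y_N)-\nabla\varphi(y^*)\|_2 \le L_\varphi\|y_N-y^*\|_2 \le L_\varphi\sqrt{2(\varphi(y_N)-\varphi^*)/\mu_\varphi}$. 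For the pairing term, since $y^*$ is optimal $\langle y^*,\nabla\varphi(y^*)\rangle = 0$ (in fact $\nabla\varphi(y^*)=0$ as $x^*(\sqrt{W}^Ty^*)=x^*$ is feasible), so $\langle y_N,\nabla\varphi(y_N)\rangle = \langle y_N-y^*,\nabla\varphi(y_N)-\nabla\varphi(y^*)\rangle \le \|y_N-y^*\|_2\|\nabla\varphi(y_N)\|_2 \le L_\varphi\|y_N-y^*\|_2^2 \le \tfrac{2L_\varphi}{\mu_\varphi}(\varphi(y_N)-\varphi^*)$. Plugging in the rate bound and $L_\varphi/\mu_\varphi = \chi(W)L/\mu$, and using $\lambda_{\max}(W)=\sigma_{\max}(\sqrt{W})$, one gets an explicit geometric-decay bound on both quantities in terms of $\lambda_{\max}(W)$, $\mu$, $R$, and the contraction factor.

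Finally I would solve for $N$: requiring the right-hand side (after taking the larger of the two estimates, which is the $\langle y_N,\nabla\varphi(y_N)\rangle \le \tfrac{2\lambda_{\max}(W)}{\mu}\cdot\big(1-1/\sqrt{\chi(W)L/\mu}\big)^N R^2 \cdot (\text{const})$ one, with the $\sqrt2$ coming from the gradient estimate) to be at most $\varepsilon$ and at most $\tilde\varepsilon = \varepsilon/R$ respectively, and using $-\log(1-t)\ge t$, I would extract $N \ge \sqrt{\chi(W)L/\mu}\,\log(\cdot/\varepsilon)$; the stated bound $N \ge 2\sqrt{\tfrac{L}{\mu}\chi(W)}\log\!\big(\tfrac{2\sqrt2\,\lambda_{\max}(W)R^2}{\mu\varepsilon}\big)$ absorbs the constant factors and the reconciliation of the two requirements (noting $\|Ax^*(z_N)\|_2 = \|\nabla\varphi(y_N)\|_2 \le \tilde\varepsilon$ gives the $\varepsilon/R$ feasibility tolerance via $\varepsilon = \tilde\varepsilon R$). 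The main obstacle I anticipate is the bookkeeping needed to ensure the strong-convexity argument is legitimately carried out on $\mathrm{range}(\sqrt{W})$ rather than all of $\mathbb{R}^{mn}$ — i.e., verifying carefully that the FGM iterates never leave that subspace and that $y^*$ used in the distance bound is exactly the minimum-norm dual solution lying in it — together with tracking the numerical constants so the final logarithmic argument matches the displayed expression.
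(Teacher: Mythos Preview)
Your overall strategy---run FGM on the dual restricted to $\mathrm{range}(\sqrt{W})$, then invoke Lemma~\ref{lemma:sasha}---is exactly what the paper does, but the two technical estimates you use differ from the paper's, and one of your steps contains an algebraic slip.

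For the gradient norm, the paper uses the standard smoothness inequality $\|\nabla\varphi(y_k)\|_2^2 \le 2L_\varphi(\varphi(y_k)-\varphi^*)$ directly, which yields $\|\nabla\varphi(y_k)\|_2 \le \sqrt{2}\,L_\varphi R\,\exp(-\tfrac{k}{2}\sqrt{\mu_\varphi/L_\varphi})$. Your route through strong convexity ($\|y_N-y^*\|_2^2\le \tfrac{2}{\mu_\varphi}(\varphi(y_N)-\varphi^*)$ and then Lipschitz continuity of $\nabla\varphi$) is correct but loses a factor $\sqrt{L_\varphi/\mu_\varphi}$, so the constant you would eventually place inside the logarithm picks up an extra $\chi(W)L/\mu$ and will not match the displayed bound exactly.

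For the pairing term, the identity you write, $\langle y_N,\nabla\varphi(y_N)\rangle = \langle y_N-y^*,\nabla\varphi(y_N)-\nabla\varphi(y^*)\rangle$, is false: using $\nabla\varphi(y^*)=0$ only gives $\langle y_N,\nabla\varphi(y_N)\rangle = \langle y_N-y^*,\nabla\varphi(y_N)\rangle + \langle y^*,\nabla\varphi(y_N)\rangle$, and the second summand does not vanish. The paper sidesteps this entirely: it bounds $|\langle y_k,\nabla\varphi(y_k)\rangle|\le \|y_k\|_2\|\nabla\varphi(y_k)\|_2$ by Cauchy--Schwarz and then invokes the FGM iterate-monotonicity property $\|y_k-y^*\|_2\le\|y_0-y^*\|_2$ from~\cite{dev12}, giving $\|y_k\|_2\le 2R$. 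That a~priori bound on $\|y_k\|_2$ is the ingredient your proposal is missing; with it (and the sharper gradient estimate above) you recover exactly the constant $2\sqrt{2}\,\lambda_{\max}(W)R^2/(\mu\varepsilon)$ inside the logarithm.
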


\begin{proof}
	Algorithm~\ref{alg:case1} follows from Algorithm~\ref{alg:nesterov} applied to the dual problem~\eqref{dual_problem22} with the change of variables $\sqrt{W}y_k = z_k$ and \mbox{$\sqrt{W}\tilde y_k = \tilde z_k$}. \ag{In other words, we apply Algorithm~\ref{alg:nesterov_dual} to dual problem, i.e., Algorithm~\ref{alg:case1} is just Algorithm~\ref{alg:nesterov_dual} in new variables  $z_k = \sqrt{W}y_k$ and \mbox{$\tilde z_k = \sqrt{W}\tilde y_k$}.}  Therefore, we are going to use the convergence results of the \textsc{FGM} for the dual problem in terms of the dual variables $y_k$ and $\tilde y_k$ and provide an estimate of the convergence rate of in terms of the primal variables. 
	
	Initially, it follows from Theorem~\ref{thm:nesterov} that the sequence of estimates generated by the iterations in~\eqref{eq:nes_sqrt} has the following property:
		\begin{align}\label{nest_dual}
		\varphi(y_k){-} \varphi^* & \leq L_{\varphi}R^2 \exp\left({-}k \sqrt{\frac{\mu_{\varphi}}{L_{\varphi}}}\right).
		\end{align}
Moreover, it holds that 
	$$
	\|\nabla \varphi (\ag{y_k})\|_2^2  \leq  2 L_{\varphi} \left(\varphi(y_k) {{-}} \varphi^*\right).$$ Hence \ag{from \eqref{eq:grad}, \eqref{eq:xstary}, \eqref{nest_dual} with $A = \sqrt{W}$ ($W=W^T$)}  $$
	\|\sqrt{W} x^*(\sqrt{W} \ag{y_k})\|_2^2 \leq 2 L_{\varphi}^2R^2 \exp\left({{-}}k \sqrt{\frac{\mu_{\varphi}}{L_{\varphi}}}\right).$$
	
We can conclude that $\|\sqrt{W} x^*(z_k)\|_2\leq \varepsilon / R$ if	
$
	k \geq 2 \sqrt{\frac{L_\varphi}{\mu_\varphi}} \log \left(\frac{ \sqrt{2}L_\varphi R^2}{ \varepsilon} \right) 
$. 	Now, by using the Cauchy{-}{-}Schwarz inequality, it follows that	
	\begin{align*}
	|\langle y_k,\sqrt{W} x^*(\sqrt{W} y_k) \rangle|^2 & \leq \|y_k\|_2^2\|\sqrt{W} x^*(\sqrt{W} y_k)\|_2^2.
	\end{align*}
	
	\cub{The authors~\cite{dev12} showed that the iterates generated by Nesterov's fast gradient method remain in a ball around the optimal point, that can be bounded by the distance between the initial point and the optimal point. We exploit this property to bound $\|y_k\|_2$ as} \ag{\cite{dev12}}
	\begin{align*}
	\|y_k {-} y^*\|_2 &\leq \|y_0 {-} y^*\|_2.
	\end{align*}
	Thus, since we assume $y_0 = 0$, it holds that $\|y_k\|_2 \leq 2\|y^*\|_2 \leq 2R$, then
	\begin{align*}
	|\langle y_k,\sqrt{W} x^*(\sqrt{W} y_k) \rangle|^2 & \leq 4 R^2 \|\sqrt{W} x^*(\sqrt{W} y_k)\|_2^2 \ag{\leq} 8 R^4  L_{\varphi}^2 \exp\left({-}k \sqrt{\frac{\mu_{\varphi}}{L_{\varphi}}}\right).
	\end{align*}
	
	Therefore \ag{from Lemma 4.1} $f(x^*(z_k)) {-} f^* \ag{= |\langle z_k, x^*(z_k) \rangle| \leq |\langle y_k,\sqrt{W} x^*(\sqrt{W} y_k) \rangle| } \leq \varepsilon$ if
	\ga{$k \geq 2\sqrt{\frac{L_\varphi}{\mu_\varphi}} \log \left( \frac{2 \sqrt{2} L_\varphi^2 R^3}{ \varepsilon} \right)$}. Finally, based on Lemma~\ref{lemma:sasha}, Algorithm \ref{alg:case1} will produce an $(\varepsilon, \varepsilon/R)${-}solution if
\ga{	\begin{align*}
	N & \geq 2\sqrt{\frac{L_\varphi}{\mu_\varphi}} \log \left(\max\left\lbrace  \frac{2\sqrt{2} L_\varphi R^2}{ \varepsilon}, \frac{\sqrt{2} L_\varphi R^2}{\varepsilon} \right\rbrace  \right).
	\end{align*}}
	
	Following the definitions of $L_\varphi$, $\mu_{\varphi}$, and $\chi(W)$, we obtain the desired result.
\end{proof}

Theorem~\ref{thm:case1} states that in order to obtain an $(\varepsilon,\varepsilon/R)${-}solution of~\eqref{consensus_problem2}, when each function $f_i$ is strongly convex and smooth, the communication complexity is $O \big(\sqrt{\chi(W) {L}/{\mu}}\log (1/\varepsilon)\big)$.

\subsection{Sums of Strongly Convex and $M${-}Lipschitz Functions on a Bounded set}

In this subsection, we propose a distributed algorithm for sum of strongly convex functions that are Lipschitz on a bounded set to be specified later. Moreover, we show the convergence rates of the proposed algorithm.

We will build our results by using Nesterov smoothing \cite{nes05,dev12}. This approach has been previously used, for example in~\cite{nec08}. We use such approach to guarantee optimal dependency on the network structure. Furthermore, as pointed before, we remove the assumption of bounded variation on the admissible set. Particularly, we will use the following result

\begin{proposition}[\cub{Lemma 3 in \cite{gas16b}}]\label{prop:smooth}
	Consider a convex function $\varphi$, the strongly convex term $({\hat \mu }/{2})\|y\|_2^2$, and define
	$
	\hat\varphi(y) = \varphi(y) {+} ({\hat \mu }/{2})\|y\|_2^2.
	$
	Then, $\hat\varphi(y)$ is $\hat\mu${-}strongly convex. Moreover, if $\hat\mu \leq {\varepsilon}/{R^2}$, where $R = \|y^*\|\ag{_2}$ and assume that there exists $y_N$ such that
	$
	\hat\varphi(y_N) {-} \hat\varphi^* \leq {\varepsilon}/{2},
	$
	it holds that
	$
	\varphi(y_N) {-} \varphi^* \leq \varepsilon,
	$
	where $\varphi^*$ and $\hat\varphi^*$ are the optimal values of the function $\varphi$ and $\hat\varphi$ respectively. Moreover, if $\varphi$ is defined in~\eqref{dual_problem22}, then 
	$
	\nabla \hat\varphi(y)   = Ax^*(A^Ty) {+} \hat\mu y.
	$
\end{proposition}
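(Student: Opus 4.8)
The plan is to prove each of the three claims in Proposition~\ref{prop:smooth} in sequence, as they build on one another. First, that $\hat\varphi$ is $\hat\mu$-strongly convex: this is immediate, since $\varphi$ is convex and we are adding the $\hat\mu$-strongly convex quadratic $(\hat\mu/2)\|y\|_2^2$; the sum of a convex function and a $\hat\mu$-strongly convex function is $\hat\mu$-strongly convex. Second, the gradient formula $\nabla\hat\varphi(y) = Ax^*(A^Ty) + \hat\mu y$ when $\varphi$ is the dual function from~\eqref{dual_problem22}: this follows directly from~\eqref{eq:grad}, which gives $\nabla\varphi(y) = Ax^*(A^Ty)$, plus the obvious derivative $\hat\mu y$ of the added quadratic.

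The substantive part is the sandwich inequality: if $\hat\mu \le \varepsilon/R^2$ and $\hat\varphi(y_N) - \hat\varphi^* \le \varepsilon/2$, then $\varphi(y_N) - \varphi^* \le \varepsilon$. The key idea is to compare the minimizers of $\varphi$ and $\hat\varphi$. Let $y^*$ be the smallest-norm minimizer of $\varphi$, so $R = \|y^*\|_2$ and $\varphi^* = \varphi(y^*)$, and let $\hat y^*$ be the minimizer of $\hat\varphi$. I would chain the estimates as
\begin{align*}
\varphi(y_N) - \varphi^* &= \hat\varphi(y_N) - \tfrac{\hat\mu}{2}\|y_N\|_2^2 - \varphi(y^*) \\
&\le \hat\varphi(\hat y^*) + \tfrac{\varepsilon}{2} - \tfrac{\hat\mu}{2}\|y_N\|_2^2 - \varphi(y^*) \\
&\le \hat\varphi(y^*) + \tfrac{\varepsilon}{2} - \tfrac{\hat\mu}{2}\|y_N\|_2^2 - \varphi(y^*) \\
&= \varphi(y^*) + \tfrac{\hat\mu}{2}\|y^*\|_2^2 + \tfrac{\varepsilon}{2} - \tfrac{\hat\mu}{2}\|y_N\|_2^2 - \varphi(y^*) \\
&\le \tfrac{\hat\mu}{2}R^2 + \tfrac{\varepsilon}{2} \le \varepsilon,
\end{align*}
where the first inequality is the hypothesis $\hat\varphi(y_N) - \hat\varphi^* \le \varepsilon/2$ rewritten as $\hat\varphi(y_N) \le \hat\varphi(\hat y^*) + \varepsilon/2$, the second uses that $\hat y^*$ minimizes $\hat\varphi$ so $\hat\varphi(\hat y^*) \le \hat\varphi(y^*)$, we then drop the nonnegative term $-\tfrac{\hat\mu}{2}\|y_N\|_2^2$, substitute $\|y^*\|_2 = R$, and finally invoke $\hat\mu \le \varepsilon/R^2$.

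I do not expect any serious obstacle here; the only point requiring a little care is the interplay with the restricted strong convexity — $\varphi$ is only strongly convex on $\ker(A^T)^\perp$, so one should make sure the minimizer $y^*$ used is the smallest-norm one (which lies in that subspace) and that $\hat y^*$ is well defined (it is, since $\hat\varphi$ is genuinely strongly convex on the whole space). One should also remark that, as with Algorithm~\ref{alg:nesterov_dual} initialized at $0$, the relevant iterates stay in the range of $A$, so applying the strong-convexity-based rate to $\hat\varphi$ is legitimate; but for the statement of the proposition itself only the above elementary comparison is needed, so I would keep the proof short and defer the iterate-localization discussion to where the proposition is applied.
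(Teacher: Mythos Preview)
Your proof is correct. The paper itself does not include a proof of this proposition; it is stated with a citation to Lemma~3 in~\cite{gas16b} and then used as a black box in the proofs of Theorems~\ref{thm:case2}, \ref{thm:case3}, and~\ref{thm:case4}. Your chain of inequalities is exactly the standard argument for this regularization trick: use $\hat\varphi(y_N)\le\hat\varphi^*+\varepsilon/2$, bound $\hat\varphi^*\le\hat\varphi(y^*)=\varphi^*+\tfrac{\hat\mu}{2}R^2$, drop the nonpositive term $-\tfrac{\hat\mu}{2}\|y_N\|_2^2$, and invoke $\hat\mu\le\varepsilon/R^2$. The observations about strong convexity and the gradient formula are likewise immediate from the definitions and~\eqref{eq:grad}. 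Your closing remark on the iterate-localization issue (that $\varphi$ is only strongly convex on $\ker(A^T)^\perp$) is well placed but, as you note, irrelevant for the proposition itself and only matters when the proposition is combined with the \textsc{FGM} rate in the subsequent theorems.
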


\begin{algorithm}[t]
	\begin{algorithmic}[1]
		\STATE{All agents set $z_0^i = \tilde{z}_0^i =0 \in \mathbb{R}^n$, $q =\frac{{\varepsilon}/{(4R^2)}}{{\lambda_{\max}(W)}/{\mu} {+} {\varepsilon}/{(4R^2)}}$, $\alpha_0$ solves $\frac{\alpha_0^2 {-}q}{1{-}\alpha_0} = 1$ and $N$.}
		\STATE{For each agent $i$}
		\FOR{ $k=0,1,2,\cdots,N{-}1$ }
		\STATE{$x^*_i(\tilde z^i_k)  = \argmax\limits_{x_i} \left\lbrace \left\langle \tilde z^i_k,x_i \right\rangle {-}f_i(x_i) \right\rbrace$}
		\STATE{Share $x^{*}_i(\tilde z_k^i)$ with neighbors, i.e. $\{j \mid (i,j) \in E \}$.}
		\STATE{$z_{k{+}1}^i = \tilde z_k^i {-} \frac{1}{{\lambda_{\max}(W)}/{\mu} {+} {\varepsilon}/{(4R^2)}}\left(  \sum\limits_{j=1}^{m} W_{ij} x^{*}_j(\tilde z_k^j) {+} \frac{\varepsilon}{4R^2} \tilde z_k^i \right)$}
		\STATE{Compute $\alpha_{k{+}1}\in (0,1)$ from $\alpha_{k{+}1}^2 = (1 {-} \alpha_{k{+}1})\alpha_{k}^2 {+}q \alpha_{k{+}1}$ and set $\beta_k = \frac{\alpha_k(1{-}\alpha_k)}{\alpha_{k}^2 {+} \alpha_{k{+}1}}$}
		\STATE{{ $\tilde z_{k{+}1}^i  = z_{k{+}1}^i {+} \beta_k(z_{k{+}1}^i {-} z_k^i)$}}
		\ENDFOR
	\end{algorithmic}
	\caption{Distributed \textsc{FGM} for strongly convex and $M${-}Lipschitz problems}
\label{alg:case2}
\end{algorithm}

The main difference between Algorithm~\ref{alg:case1} and Algorithm~\ref{alg:case2} is that we have an additional term inside the parenthesis in Line~$6$. Moreover, the corresponding strong convexity constant of the dual function is $\varepsilon/R^2$, where $R$ can be an upper bound on the norm of optimal solution of the dual problem $y^*$, i.e., $\|y^*\|_2 \leq R$. Next, we state our main result regarding the number of iterations required by Algorithm~\ref{alg:case2} to reach an approximate solution of problem~\eqref{equiv_main_problem}.

\begin{theorem}\label{thm:case2}              
	Let $F(x)$ be dual friendly and Assumption \ref{assumptions}(b) hold. \ag{Moreover, assume $\|\nabla F(x^*)\|_2 \leq M$.} For any 
	$\varepsilon >0$, the output $x^*(z_N)$ of Algorithm~\ref{alg:case2} is an $(\varepsilon,\varepsilon /R)${-}solution of \eqref{consensus_problem2} for
	\begin{align*}
	N & \geq 2\sqrt{4 \chi(W)\frac{M^2 }{\mu \cdot \varepsilon} {+} 1} \log \left( 4 \chi(W)\frac{M^2 }{\mu \cdot \varepsilon} {+} 1  \right),
	\end{align*}
	where $\chi(W) = \lambda_{\max}{(W)}/ \lambda_{\min}^{{+}}{(W)}$.
\end{theorem}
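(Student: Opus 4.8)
The plan is to mirror the proof of Theorem~\ref{thm:case1}, but now applied to the regularized dual function $\hat\varphi(y) = \varphi(y) + (\hat\mu/2)\|y\|_2^2$ with $\hat\mu = \varepsilon/(4R^2)$, and to combine the convergence estimate for $\hat\varphi$ with the smoothing bound of Proposition~\ref{prop:smooth}. First I would record the parameters of $\hat\varphi$: since $\varphi$ has $L_\varphi = \lambda_{\max}(W)/\mu$-Lipschitz gradient and is convex, $\hat\varphi$ is $\hat\mu$-strongly convex and has Lipschitz gradient with constant $\hat L_\varphi = L_\varphi + \hat\mu = \lambda_{\max}(W)/\mu + \varepsilon/(4R^2)$. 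The condition number that drives the \textsc{FGM} rate is then $\hat L_\varphi/\hat\mu = \big(\lambda_{\max}(W)/\mu + \varepsilon/(4R^2)\big)\big/\big(\varepsilon/(4R^2)\big) = 4R^2\lambda_{\max}(W)/(\mu\varepsilon) + 1$, which matches the constant $q$ chosen in Algorithm~\ref{alg:case2}. Here I would use the hypothesis $\|\nabla F(x^*)\|_2 \le M$ together with $R = \|y^*\|_2$ — but the cleaner route is to observe that the Lipschitz-on-a-ball assumption (Assumption~\ref{assumptions}(b)) gives a bound $R \lesssim M/\sqrt{\lambda_{\min}^+(W)}$ (analogous to the $\Delta(f)/r$ estimate quoted in the remark after Lemma~\ref{lemma:sasha}), so that $R^2\lambda_{\max}(W)/\mu \lesssim \chi(W) M^2/\mu$; substituting this into $4R^2\lambda_{\max}(W)/(\mu\varepsilon)+1$ produces the $4\chi(W)M^2/(\mu\varepsilon)+1$ appearing in the statement.

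Next I would run the three-part estimate exactly as in Theorem~\ref{thm:case1}, now for $\hat\varphi$. Applying Theorem~\ref{thm:nesterov} (via Algorithm~\ref{alg:nesterov_dual}) to $\hat\varphi$ gives
\begin{align*}
\hat\varphi(y_k) - \hat\varphi^* \le \hat L_\varphi R^2 \exp\!\left(-k\sqrt{\hat\mu/\hat L_\varphi}\right),
\end{align*}
and then: (i) the descent-lemma inequality $\|\nabla\hat\varphi(y_k)\|_2^2 \le 2\hat L_\varphi(\hat\varphi(y_k)-\hat\varphi^*)$ controls $\|\nabla\hat\varphi(y_k)\|_2$; (ii) the non-expansiveness of the \textsc{FGM} iterates around the optimum, $\|y_k - \hat y^*\|_2 \le \|y_0 - \hat y^*\|_2$ (from \cite{dev12}), with $y_0 = 0$ gives $\|y_k\|_2 \le 2R$; (iii) Cauchy--Schwarz then bounds $|\langle y_k, \nabla\hat\varphi(y_k)\rangle|$. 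At this point I would invoke Proposition~\ref{prop:smooth}: requiring $\hat\varphi(y_N) - \hat\varphi^* \le \varepsilon/2$ (which, since $\hat\mu \le \varepsilon/R^2$, implies $\varphi(y_N) - \varphi^* \le \varepsilon$) and feeding the resulting gradient/inner-product bounds into Lemma~\ref{lemma:sasha} (applied with $A = \sqrt{W}$, noting $\nabla\varphi(y) = \sqrt{W}x^*(\sqrt{W}y)$) yields that $x^*(z_N)$ is an $(\varepsilon,\varepsilon/R)$-solution. Solving the exponential-decay inequality for $k$ gives $N \ge 2\sqrt{\hat L_\varphi/\hat\mu}\,\log(\text{poly}/\varepsilon)$, and collapsing the logarithmic argument (all factors inside are $O(\text{condition number})$ up to the logarithmic scaling the theorem permits) produces the stated bound.

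The main obstacle, and the place where care is needed, is step (iii) combined with the substitution for $R$: one has to verify that the extra $\hat\mu y$ term in $\nabla\hat\varphi$ does not spoil the equality $f(x^*(z_N)) - f^* = |\langle z_N, x^*(z_N)\rangle|$ used in the smooth case — indeed $\langle y_N, \nabla\hat\varphi(y_N)\rangle = \langle y_N, \sqrt{W}x^*(\sqrt{W}y_N)\rangle + \hat\mu\|y_N\|_2^2$, and both terms must be driven below $\varepsilon$, so the choice $\hat\mu = \varepsilon/(4R^2)$ together with $\|y_N\|_2 \le 2R$ is exactly what makes the regularization term contribute at most $\varepsilon$. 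A second delicate point is justifying the bound $R \lesssim M/\sqrt{\lambda_{\min}^+(W)}$ from the local Lipschitz-on-a-ball hypothesis rather than global Lipschitzness; this is where the weaker directional-derivative condition of \cite{dev12} cited in the remark after Lemma~\ref{lemma:sasha} does the work, and I would state it as an auxiliary lemma before the main argument. Everything else is a routine repetition of the computation in Theorem~\ref{thm:case1} with $\varphi$ replaced by $\hat\varphi$.
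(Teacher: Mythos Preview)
Your proposal follows the paper's overall strategy---regularize the dual with $\hat\mu=\varepsilon/(4R^2)$, run \textsc{FGM} on $\hat\varphi$, and transfer back via Lemma~\ref{lemma:sasha}---so it is essentially correct. Two tactical points differ from the paper and are worth noting.

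First, for the inner-product bound the paper does \emph{not} use Cauchy--Schwarz. Instead it expands
\[
\hat\varphi(y)-\hat\varphi^* \;\ge\; \frac{\|\nabla\hat\varphi(y)\|_2^2}{2L_{\hat\varphi}}
\;=\; \frac{\|\nabla\varphi(y)+\hat\mu y\|_2^2}{2L_{\hat\varphi}}
\;\ge\; \frac{\hat\mu\,\langle y,\nabla\varphi(y)\rangle}{L_{\hat\varphi}},
\]
using only $\|a+b\|_2^2\ge 2\langle a,b\rangle$. This gives $\langle y,\nabla\varphi(y)\rangle$ directly in terms of the $\hat\varphi$-gap, with no need to bound $\|y_k\|$ at that step (the bound $\|y_k\|\le 2\hat R\le 2R$ is used only later, for the $\hat\mu\|y_k\|$ term in $\|\nabla\varphi(y_k)\|\le\|\nabla\hat\varphi(y_k)\|+\hat\mu\|y_k\|$). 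Your Cauchy--Schwarz route also works and yields the same order, but introduces an unnecessary detour through Proposition~\ref{prop:smooth}: Lemma~\ref{lemma:sasha} needs only the inner-product and gradient-norm bounds for the \emph{unregularized} $\varphi$, not $\varphi(y_N)-\varphi^*\le\varepsilon$.

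Second, your source for $R\lesssim M/\sqrt{\lambda_{\min}^+(W)}$ is misdirected. The paper does not use the directional-derivative device from~\cite{dev12}; it invokes Theorem~3 of~\cite{lan17}, which gives the explicit bound $R^2=\|y^*\|_2^2\le \|\nabla F(x^*)\|_2^2/\sigma_{\min}^+(A)$ (equation~\eqref{Soomin_bound}). Combined with the hypothesis $\|\nabla F(x^*)\|_2\le M$ and $A=\sqrt{W}$, this yields $R^2\le M^2/\lambda_{\min}^+(W)$ immediately, without any auxiliary lemma.
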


\begin{proof}
	Initially, consider the regularized dual function $\hat \varphi$ with $\hat{\mu} = {\varepsilon}/{(4R^2)}$, which is $\mu_{\hat \varphi}${-}strongly convex with $\mu_{\hat \varphi} = {\varepsilon}/{(4R^2)}$, and $L_{\hat \varphi}${-}smooth with $L_{\hat \varphi}={\lambda_{\max}(W)}/{\mu}+{\varepsilon}/{(4R^2)}$. Thus, similarly as in \eqref{nest_dual} 
	\begin{align*}
	\hat \varphi(y_k){-} \hat \varphi^* & \leq L_{\hat \varphi} \hat R^2 \exp\left({-}k \sqrt{\frac{\mu_{\hat \varphi}}{L_{\hat \varphi}}}\right)\leq L_{\hat \varphi} R^2 \exp\left({-}k \sqrt{\frac{\mu_{\hat \varphi}}{L_{\hat \varphi}}}\right),
	\end{align*}
	where $\hat R = \| \hat y^* \|_2$, and $\hat y^*$ is the smallest norm solution of the regularized dual problem. Note that by definition $\hat R  = \|\hat y^*\|_2 \leq \|y^*\|_2 = R $. \ag{Note also, that $\|y_k - \hat{y}^*\|_2 \leq \|y_0 - \hat{y}^*\|_2 = \|\hat{y}^*\|_2$ (see formula (2.5) from \cite{dev12}). Therefore, $\|y_k\|_2 \leq  2\|\hat{y}^*\|_2 \leq 2\|y^*\|_2 $.} 
	
	
	Next, we provide a relation between the distance to optimality of the non{-}regularized primal problem and the regularized dual problem. Note that for any $y$ it holds that
	\begin{align*}
	\hat \varphi (y) {-} \hat \varphi^* &\geq \frac{\|\nabla \hat \varphi(y)\|_2^2}{2 L_{\hat \varphi}} = \frac{\|\nabla \varphi (y) {+} \hat \mu y\|_2^2}{2 L_{\hat \varphi}} \geq \frac{\hat \mu \left\langle y,\nabla \varphi(y) \right\rangle}{L_{\hat \varphi}}.
	\end{align*}
	Therefore, $
	\left\langle y,\nabla \varphi(y) \right\rangle  \leq ({L_{\hat \varphi}}/{\mu_{\hat \varphi}} )\left( \hat \varphi (y) {-} \hat \varphi^*\right) \leq ({4}/{\varepsilon})L_{\hat \varphi}^2 R^4 \exp\left({-}k \sqrt{{\mu_{\hat \varphi}}/{L_{\hat \varphi}}}\right)
	$.
	Consequently, if 
	$
	k~\geq~2\sqrt{{L_{\hat \varphi}}/{\mu_{\hat \varphi}}} \log \left( {2L_{\hat \varphi} R^{2}}/{\varepsilon} \right),
	$
	then $\left\langle y,\nabla \varphi(y) \right\rangle \leq \varepsilon$.
	
	Moreover, it follows from the definition of the regularized dual function that
	\begin{align*}
	\|\nabla \varphi(y_k)\|_2 & \leq \|\nabla \hat \varphi (y_k)\|_2 {+} \hat \mu \| y_k\|_2 \leq \sqrt{2 L_{\hat \varphi} (\hat \varphi (y) {-} \hat \varphi^*)} {+} \hat \mu \| y_k \|_2\\
	& \leq  \sqrt{2} L_{\hat \varphi} R \exp\left({-}\frac{k}{2} \sqrt{\frac{\mu_{\hat \varphi}}{L_{\hat \varphi}}}\right) {+}2 \hat \mu \hat R\leq  \sqrt{2} L_{\hat \varphi} R \exp\left({-}\frac{k}{2} \sqrt{\frac{\mu_{\hat \varphi}}{L_{\hat \varphi}}}\right) {+} \frac{\varepsilon}{2R}.
	\end{align*}
	
	Using the definition of the gradient of the dual function then we have that \newline
	\mbox{$
	\|\sqrt{W} x^*(\sqrt{W} \tilde y_k) \|_2  \leq {\varepsilon}/{R},
	$}
	for 
	$
	k  \geq 2\sqrt{{L_{\hat \varphi}}/{\mu_{\hat\varphi}}} \log \left({\sqrt{2} L_{\hat \varphi} R^2 }/{ \varepsilon} \right).
	$
	
	We conclude, from Lemma~\ref{lemma:sasha}, that we will have an $(\varepsilon,\varepsilon/R)$~solution of~\eqref{consensus_problem2} if
	\begin{align*}
	k & \geq 2\sqrt{\frac{L_{\hat \varphi}}{\mu_{\hat\varphi}}} \log \left( \max \left\lbrace \frac{2L_{\hat \varphi} R^{2}}{\varepsilon}  ,\frac{\sqrt{2} L_{\hat \varphi} R^2 }{ \varepsilon}  \right\rbrace  \right) \\
	& \geq  2\sqrt{\frac{{\lambda_{\max}(W)}/{\mu} {+} {\varepsilon}/{(4R^2)}}{ {\varepsilon}/{(4R^2)}}} \log \left(  \frac{2 R^{2} \left(\frac{\lambda_{\max}(W)}{\mu} {+} \frac{\varepsilon}{4R^2} \right) }{\varepsilon}  \right) \\
	& =  2\sqrt{\frac{4 R^2 \lambda_{\max}(W)}{\mu \cdot \varepsilon } {+} 1} \ \log \left(  \frac{4 R^2 \lambda_{\max}(W)}{\mu \cdot \varepsilon } {+} 1 \right).
	\end{align*}

	Now, we focus our attention to find a bound on the value $R$ such that we can provide an explicit dependency on the minimum non zero eigenvalue of the graph Laplacian. This will allow us to provide an explicit iteration complexity in terms of the condition number of the graph Laplacian. 
	
	Theorem $3$ in \cite{lan17} provides a bound that relates $R$ with the magnitude of the gradient of $F(x)$ at the optimal point $x = x^*$. Particularly, it is shown that
	\begin{align}\label{Soomin_bound}
	R^2 & = \|y^*\|_2^2 \leq {\|\nabla F(x^*)\|_2^2}/{\sigma_{\min}^{+}(A)}.
	\end{align} 
Thus, it follows from \ag{\cite{lan17}} that $
	R^2 \leq {M^2}/{\sigma_{\min}^{+}(A)}$, \ag{where $M = \|\nabla F(x^*)\|_2$.}
	Therefore to have an $(\varepsilon, \varepsilon/R)${-}solution it is necessary that
	\begin{align*}
k
& \geq 2\sqrt{4 \chi(W)\frac{M^2 }{\mu \cdot \varepsilon} {+} 1} \ \log \left( 4 \chi(W)\frac{M^2 }{\mu \cdot \varepsilon} {+} 1  \right). 
\end{align*}
		
\end{proof}

Theorem~\ref{thm:case2} states the communication complexity of Algorithm~\ref{alg:case2}. Particularly, the total number of communication rounds required by each agent to find an $(\varepsilon,\varepsilon /R)${-}solution of \eqref{consensus_problem2} can be bounded by $\tilde O\big(\sqrt{\chi(W) {M^2 }/{(\mu \cdot \varepsilon)} }\big)$.

\subsection{Sums of Smooth and Convex Functions}

In this subsection, we propose a distributed optimization algorithm to find a solution of the problem~\ref{equiv_main_problem}, when the function $F(x)$ is smooth, i.e., $F(x)$ has Lipschitz gradients. We follow the idea of regularization to induce strong convexity, this time on the primal problem. Moreover, we show the minimum number of iterations required to compute an approximate solution to the problem.

\begin{algorithm}[t]
	\begin{algorithmic}[1]
		\STATE{All agents set $z_0^i = \tilde{z}_0^i =0 \in \mathbb{R}^n$, $q =\frac{\varepsilon/R_x^2}{L{+}\varepsilon/R_x^2}\frac{\lambda^{{+}}_{\min}(W)}{\lambda_{\max}(W)}$, $\alpha_0$ solves $\frac{\alpha_0^2 {-}q}{1{-}\alpha_0} = 1$ and $N$.}
		\STATE{For each agent $i$}
		\FOR{ $k=0,1,2,\cdots,N{-}1$ }
		\STATE{Set $\hat x^*_i(\tilde z^i_k) = \argmax_{x_i} \{ \left\langle \tilde z^i_k,x_i \right\rangle  {-} f_i(x_i) {-} \frac{\varepsilon}{2 R^2_x}\|x_i {-}x^*_i(0)\|_2^2\}$} 
		\STATE{Share $\hat x^{*}_i(\tilde z_k^i)$ with neighbors, i.e. $\{j \mid (i,j) \in E \}$.}
		\STATE{{ $z_{k{+}1}^i = \tilde z_k^i {-} \frac{\varepsilon/R_x^2}{\lambda_{\max}(W)} \sum_{j=1}^{m} W_{ij} \hat x^{*}_j(\tilde z_k^j)$}}
		\STATE{Compute $\alpha_{k{+}1}\in (0,1)$ from $\alpha_{k{+}1}^2 = (1 {-} \alpha_{k{+}1})\alpha_{k}^2 {+}q \alpha_{k{+}1}$ and set $\beta_k = \frac{\alpha_k(1{-}\alpha_k)}{\alpha_{k}^2 {+} \alpha_{k{+}1}}$}
		\STATE{{ $\tilde z_{k{+}1}^i  = z_{k{+}1}^i {+} \beta_k(z_{k{+}1}^i {-} z_k^i)$}}
		\ENDFOR
	\end{algorithmic}
	\caption{Distributed \textsc{FGM} for smooth convex problems}
	\label{alg:case3}
\end{algorithm}

\begin{theorem}\label{thm:case3}
	Let $F(x)$ be dual friendly and Assumption \ref{assumptions}(c) hold. For any $\varepsilon >0$, the output $x^*(z_N)$ of Algorithm \ref{alg:case3} is an \mbox{$(\varepsilon,\varepsilon/R )${-}solution} of~\eqref{consensus_problem2} for
	\begin{align*}
	N & \geq 2\sqrt{\left( \frac{2LR_x^2}{\varepsilon} {+}1 \right) \chi(W)} \log \left(\frac{8\sqrt{2} \lambda_{\max}(W) R^2 R_x^2}{ \varepsilon^2}  \right). 
	\end{align*}
	where $\chi(W) = \lambda_{\max}{(W)}/ \lambda_{\min}^{{+}}{(W)}$ and $R_x =  \|x^*{-}x^*(0)\|_2$.
\end{theorem}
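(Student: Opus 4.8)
The plan is to mimic the proof of Theorem~\ref{thm:case2}, but apply regularization on the \emph{primal} side instead of the dual side. First I would introduce the regularized primal function $\hat F(x) = F(x) + \frac{\varepsilon}{2R_x^2}\|x - x^*(0)\|_2^2$, which is $\hat\mu$-strongly convex with $\hat\mu = \varepsilon/R_x^2$ and $L + \varepsilon/R_x^2$-smooth. Since $F$ is dual friendly, so is $\hat F$, and the inner maximization in Line~4 of Algorithm~\ref{alg:case3} is exactly $\hat x_i^*(\tilde z_k^i) = \argmax_{x_i}\{\langle \tilde z_k^i, x_i\rangle - f_i(x_i) - \frac{\varepsilon}{2R_x^2}\|x_i - x_i^*(0)\|_2^2\}$. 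By Proposition~\ref{prop:smooth} applied in the primal, minimizing $\hat F$ over $\sqrt{W}x = 0$ yields a point whose objective value under $F$ is within $\varepsilon/2$ of $F^*$, provided the regularization parameter is chosen small enough relative to $R_x = \|x^* - x^*(0)\|_2$; this is where the factor $\big(\frac{2LR_x^2}{\varepsilon}+1\big)$ in $N$ comes from, since it is the condition number of $\hat F$.

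Next I would run Algorithm~\ref{alg:nesterov_dual} (equivalently Algorithm~\ref{alg:case3} after the change of variables $z_k = \sqrt{W}y_k$, $\tilde z_k = \sqrt{W}\tilde y_k$) on the dual of $\min_{\sqrt{W}x=0}\hat F(x)$. The dual function $\hat\varphi$ is $\mu_{\hat\varphi}$-strongly convex on $\ker(\sqrt{W}^T)^\perp$ with $\mu_{\hat\varphi} = \lambda_{\min}^+(W)/(L+\varepsilon/R_x^2)$ and $L_{\hat\varphi}$-smooth with $L_{\hat\varphi} = \lambda_{\max}(W)/(\varepsilon/R_x^2)$, so $L_{\hat\varphi}/\mu_{\hat\varphi} = \chi(W)\big(\frac{LR_x^2}{\varepsilon}+1\big)$. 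Theorem~\ref{thm:nesterov} then gives $\hat\varphi(y_k) - \hat\varphi^* \le L_{\hat\varphi}\hat R^2 \exp(-k\sqrt{\mu_{\hat\varphi}/L_{\hat\varphi}})$ with $\hat R = \|\hat y^*\|_2 \le R$ (the smallest-norm dual solution of the regularized problem has norm no larger than that of the unregularized one). As in the previous proofs, from $\|\nabla\hat\varphi(y_k)\|_2^2 \le 2L_{\hat\varphi}(\hat\varphi(y_k)-\hat\varphi^*)$ and the bound $\|y_k\|_2 \le 2\hat R \le 2R$ (iterates of FGM stay in a ball around the optimum, cf.~\cite{dev12}), I would control both $\|\sqrt{W}\hat x^*(\sqrt{W}y_k)\|_2$ and $|\langle y_k, \sqrt{W}\hat x^*(\sqrt{W}y_k)\rangle|$ by geometrically decaying quantities. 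Invoking Lemma~\ref{lemma:sasha} (with $\hat\varphi$), this certifies that $\hat x^*(z_k)$ is an $(\varepsilon/2, \varepsilon/R)$-solution of $\min_{\sqrt{W}x=0}\hat F(x)$ once $k$ exceeds $2\sqrt{L_{\hat\varphi}/\mu_{\hat\varphi}}\log(\text{const}\cdot L_{\hat\varphi}R^2/\varepsilon)$.

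Finally I would combine the two error contributions: the $\varepsilon/2$ from Proposition~\ref{prop:smooth} (primal regularization bias) and the $\varepsilon/2$ from the FGM accuracy on the regularized problem, giving an $(\varepsilon, \varepsilon/R)$-solution of the original problem~\eqref{consensus_problem2}. Substituting $L_{\hat\varphi}/\mu_{\hat\varphi} = \chi(W)\big(\frac{2LR_x^2}{\varepsilon}+1\big)$ (with the factor $2$ absorbed from tracking the $\varepsilon/2$'s carefully) into the iteration count, and bounding the logarithmic argument by $L_{\hat\varphi}R^2/\varepsilon = \lambda_{\max}(W)R^2R_x^2/\varepsilon^2$ up to the stated numerical constant $8\sqrt2$, yields the claimed lower bound on $N$. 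I expect the main obstacle to be bookkeeping: carefully splitting $\varepsilon$ between the primal regularization and dual optimization so that the condition-number factor comes out as $\big(\frac{2LR_x^2}{\varepsilon}+1\big)\chi(W)$ rather than something looser, and verifying that the constant in the logarithm is exactly $8\sqrt2\,\lambda_{\max}(W)R^2R_x^2/\varepsilon^2$ — in particular keeping track of how $R$ and $R_x$ enter separately (one from the dual-variable norm, one from the primal regularization radius) and confirming the max over the two stopping conditions (optimality versus feasibility) is the one written.
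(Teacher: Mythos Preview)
Your proposal is correct and follows the same approach as the paper: regularize the primal to obtain a strongly convex $\hat F$, then run the dual FGM analysis on the regularized problem and combine the regularization bias with the optimization error. The paper's proof is more economical in that it simply invokes Theorem~\ref{thm:case1} as a black box on $\hat F$ (with $\hat\mu=\varepsilon/(2R_x^2)$, $\hat L=L+\hat\mu$, and target accuracy $\varepsilon/2$) rather than re-deriving the Lemma~\ref{lemma:sasha} / $\|y_k\|_2\le 2R$ argument inline as you do; citing Theorem~\ref{thm:case1} directly would spare you most of the bookkeeping you anticipate.
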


\begin{proof}
	Initially, consider the regularized problem 
	\begin{align}\label{prob:regularized_primal}
	\min_{\sqrt{W} x=0} \hat F(x) & \qquad 
	\text{where } \qquad \hat F(x) \triangleq F(x) {+} \frac{\varepsilon}{2 R^2_x}\|x {-} x^*(0)\|_2^2,
	\end{align}
	where $F(x)$ is defined in~\eqref{consensus_problem2}. The function $\hat F(x)$ is $\hat \mu${-}strongly convex with $\hat \mu = {\varepsilon}/{(2R_x^2)}$ and $\hat L${-}smooth with $\hat L = L {+} \hat \mu$. Given that the regularized primal function is strongly convex and smooth, we can use the results from Theorem~\ref{thm:case1}. Particularly, in order to have an $(\varepsilon/2,\varepsilon/(2R))${-}solution of problem~\eqref{prob:regularized_primal}, one can use Algorithm~\ref{alg:case1} with
	\begin{align*}
	N & \geq 2\sqrt{\frac{\hat L}{\hat \mu}\chi(W)} \log \left(\frac{4\sqrt{2} \lambda_{\max}(W) R^2}{ \hat  \mu \cdot \varepsilon}  \right)\\
	& = 2\sqrt{\left( \frac{2LR_x^2}{\varepsilon} {+}1 \right) \chi(W)} \log \left(\frac{8\sqrt{2} \lambda_{\max}(W) R^2 R_x^2}{ \varepsilon^2}  \right).
	\end{align*}	
	Having an $(\varepsilon/2,\varepsilon/(2R))${-}solution of problem~\eqref{prob:regularized_primal}, guarantees that $\hat x^*_N$ is an approximate \mbox{$(\varepsilon,\varepsilon/R)${-}solution} of problem~\eqref{consensus_problem2}, and the desired result follows.
\end{proof}

Theorem~\ref{thm:case3} states the communication complexity of Algorithm~\ref{alg:case3}. Particularly, the total number of communication rounds required by each agent to find an \newline $(\varepsilon,\varepsilon /R)${-}solution of \eqref{consensus_problem2} can be bounded by $\tilde O\big(\sqrt{\chi(W) {LR_x^2 }/{\varepsilon} }\big)$.

\subsection{Sums of Convex and $M${-}Lipschitz Functions}

In this subsection, we present the distributed algorithm for optimization of convex function when no strong convexity or smoothness is guaranteed. The main idea is to use regularization both in the primal and the dual problem. Therefore, we can build our algorithm and its analysis from the results in Theorem~\ref{thm:case2} and Theorem~\ref{thm:case3}. Next, we present the proposed algorithm and their convergence analysis.

\begin{algorithm}[t]
	\begin{algorithmic}[1]
		\STATE{All agents set $z_0^i = \tilde{z}_0^i =0 \in \mathbb{R}^n$, $q =\frac{{\varepsilon}/{(4R^2)}}{ {\lambda_{\max}(W)}/{(\varepsilon / R_x^2)}{+} {\varepsilon}/{(4R^2)}}$, $\alpha_0$ solves $\frac{\alpha_0^2 {-}q}{1{-}\alpha_0} = 1$ and $N$.}
		\STATE{For each agent $i$}
		\FOR{ $k=0,1,2,\cdots,N{-}1$ }
		\STATE{Set $\hat x^*_i(\tilde z^i_k) = \argmax_{x_i} \{ \left\langle \tilde z^i_k,x_i \right\rangle  {-} f_i(x_i) {-} \frac{\varepsilon}{2 R^2_x}\|x_i {-}x^*_i(0)\|_2^2\}$} 
		\STATE{Share $\hat x^{*}_i(\tilde z_k^i)$ with neighbors, i.e. $\{j \mid (i,j) \in E \}$.}
		\STATE{$z_{k{+}1}^i = \tilde z_k^i {-} \frac{1}{ {\lambda_{\max}(W)}/{(\varepsilon / R_x^2)}{+} {\varepsilon}/{(4R^2)}}\left(  \sum\limits_{j=1}^{m} W_{ij} \hat  x^{*}_j(\tilde z_k^j) {+} \frac{\varepsilon}{4R^2} \tilde z_k^i \right)$}
		\STATE{Compute $\alpha_{k{+}1}\in (0,1)$ from $\alpha_{k{+}1}^2 = (1 {-} \alpha_{k{+}1})\alpha_{k}^2 {+}q \alpha_{k{+}1}$ and set $\beta_k = \frac{\alpha_k(1{-}\alpha_k)}{\alpha_{k}^2 {+} \alpha_{k{+}1}}$}
		\STATE{{ $\tilde z_{k{+}1}^i  = z_{k{+}1}^i {+} \beta_k(z_{k{+}1}^i {-} z_k^i)$}}
		\ENDFOR
	\end{algorithmic}
	\caption{Distributed \textsc{FGM} for $M${-}Lipschitz functions}
	\label{alg:case4}
\end{algorithm}

\begin{theorem}\label{thm:case4}
	Let $F(x)$ be dual friendly and Assumption \ref{assumptions}(d) hold. For any $\varepsilon >0$, the output $x^*(z_N)$ of Algorithm~\ref{alg:case4} is an \mbox{$(\varepsilon,\varepsilon/R )${-}solution} of~\eqref{consensus_problem2} for
	\begin{align*}
	N & \geq 2\sqrt{16 \chi(W)\frac{M^2  R^2_x}{\varepsilon^2} {+} 1} \log \left( 16 \chi(W)\frac{M^2 R^2_x}{  \varepsilon^2} {+} 1  \right),
	\end{align*}
	where $\chi(W) = \lambda_{\max}{(W)}/ \lambda_{\min}^{{+}}{(W)}$, $R = \|y^*\|_2$, and $R_x =  \|x^*{-}x^*(0)\|_2$.
\end{theorem}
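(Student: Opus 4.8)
The plan is to reduce Theorem~\ref{thm:case4} to Theorem~\ref{thm:case2} by a single layer of primal regularization, exactly as Theorem~\ref{thm:case3} reduces the convex-and-smooth case to the strongly-convex-and-smooth case. First I would introduce the regularized primal problem
\begin{align*}
\min_{\sqrt{W}x = 0}\hat F(x), \qquad \hat F(x) \triangleq F(x) + \frac{\varepsilon}{2R_x^2}\|x - x^*(0)\|_2^2 ,
\end{align*}
and record the two properties that make it tractable: $\hat F$ is $\hat\mu$-strongly convex with $\hat\mu = \varepsilon/(2R_x^2)$, and on the ball around $x^*$ of radius $\|x^*-x^0\|_2$ it stays Lipschitz with $\|\nabla\hat F(x)\|_2 \le \hat M$, where $\hat M \le M + O(\varepsilon)$ because the extra term $\|\frac{\varepsilon}{R_x^2}(x-x^*(0))\|_2$ is bounded on that ball, so $\hat M = O(M)$. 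Hence $\hat F$ satisfies Assumption~\ref{assumptions}(b) with parameters $(\hat\mu,\hat M)$ on the relevant bounded set, and it is dual friendly: the inner subproblem in Line~$4$ of Algorithm~\ref{alg:case4} is precisely the sharp operator of $\hat F$.

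Next I would observe that Algorithm~\ref{alg:case4} is exactly Algorithm~\ref{alg:case2} run on $\hat F$ with target accuracy $\varepsilon/2$: Line~$6$ is the \textsc{FGM} step for the dual of $\min_{\sqrt{W}x=0}\hat F(x)$, carrying the dual-smoothing term $\frac{\varepsilon}{4R^2}\tilde z_k^i$ of Proposition~\ref{prop:smooth} and the step length $\bigl(\lambda_{\max}(W)/\hat\mu + \varepsilon/(4R^2)\bigr)^{-1}$, written in the variables $z_k = \sqrt{W}y_k$, $\tilde z_k = \sqrt{W}\tilde y_k$. Therefore Theorem~\ref{thm:case2}, applied to $\hat F$ with accuracy $\varepsilon/2$ and with $M,\mu$ there replaced by $\hat M,\hat\mu$, guarantees that $x^*(z_N)$ is an $(\varepsilon/2,\varepsilon/(2R))$-solution of $\min_{\sqrt{W}x=0}\hat F(x)$ whenever
\begin{align*}
N \ge 2\sqrt{4\chi(W)\frac{\hat M^2}{\hat\mu\,(\varepsilon/2)} + 1}\;\log\!\left(4\chi(W)\frac{\hat M^2}{\hat\mu\,(\varepsilon/2)} + 1\right);
\end{align*}
substituting $\hat\mu=\varepsilon/(2R_x^2)$ and $\hat M = O(M)$ turns $4\chi(W)\hat M^2/(\hat\mu\varepsilon/2)$ into $16\chi(W)M^2R_x^2/\varepsilon^2$, which is precisely the expression in the statement.

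Finally I would transfer accuracy from $\hat F$ back to $F$, reusing the closing argument of Theorem~\ref{thm:case3}: since $\hat F\ge F$ pointwise and $\hat F(x^*) = F^* + \frac{\varepsilon}{2R_x^2}\|x^*-x^*(0)\|_2^2 = F^* + \varepsilon/2$, the constrained minimum of $\hat F$ obeys $\hat F^* \le F^* + \varepsilon/2$, so any $(\varepsilon/2,\varepsilon/(2R))$-solution $x$ of the regularized problem satisfies $F(x)-F^* \le \hat F(x)-F^* \le (\hat F^* + \varepsilon/2) - F^* \le \varepsilon$ and $\|\sqrt{W}x\|_2 \le \varepsilon/(2R) \le \varepsilon/R$, i.e.\ it is an $(\varepsilon,\varepsilon/R)$-solution of~\eqref{consensus_problem2}. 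The main obstacle is the middle step: one must check that the primal regularization does not inflate the effective Lipschitz constant beyond $O(M)$ on the bounded set and, correspondingly, that the bound $R=\|y^*\|_2$ still applies to the regularized dual — that is, that~\eqref{Soomin_bound} used for $\hat F$, namely $\hat R^2 \le \|\nabla\hat F(\hat x^*)\|_2^2/\sigma_{\min}^+(A)$, yields the same $R$ up to the hidden constants — and then track the two accuracy halvings (one from the primal regularization, one inside Theorem~\ref{thm:case2}'s dual regularization) against the constants appearing in the step sizes of Algorithm~\ref{alg:case4}; that bookkeeping is the only delicate part.
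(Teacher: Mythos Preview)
Your proposal is correct and matches the paper's approach exactly: regularize the primal to force strong convexity with $\hat\mu=\varepsilon/(2R_x^2)$, apply Theorem~\ref{thm:case2} to $\hat F$ at accuracy $\varepsilon/2$, then transfer the $(\varepsilon/2,\varepsilon/(2R))$-solution back to~\eqref{consensus_problem2}. The paper's proof is terser---it plugs $M$ and $R$ straight into Theorem~\ref{thm:case2} without your bookkeeping about $\hat M$ versus $M$ or $\hat R$ versus $R$, and it closes by citing Proposition~\ref{prop:smooth} rather than the primal transfer argument you spell out---so the concerns you flag as ``the only delicate part'' are ones the paper simply does not address.
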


\begin{proof}
	Consider again, as in Theorem~\ref{thm:case3}, the regularized problem~\eqref{prob:regularized_primal} where $F(x)$ is defined in~\eqref{consensus_problem2}. The function $\hat F(x)$ is $\hat \mu${-}strongly convex with $\hat \mu = {\varepsilon}/{(2R_x^2)}$. However, we have assumed now that $F(x)$ is not smooth. Nevertheless, from Theorem~\ref{thm:case2}, we have that Algorithm~\ref{alg:case2} will generate an $(\varepsilon/2,\varepsilon /(2R))${-}solution of \eqref{prob:regularized_primal}, namely $x^*_N$, for
	\begin{align*}
	N & \geq 2\sqrt{8 \chi(W)\frac{M^2 }{\hat \mu \cdot \varepsilon} {+} 1} \log \left( 8 \chi(W)\frac{M^2 }{\hat \mu \cdot \varepsilon} {+} 1  \right) \\
		& = 2\sqrt{16 \chi(W)\frac{M^2  R^2_x}{\varepsilon^2} {+} 1} \log \left( 16 \chi(W)\frac{M^2 R^2_x}{  \varepsilon^2} {+} 1  \right).
	\end{align*}
	Therefore, by Proposition~\ref{prop:smooth}, $x^*(z_N)$ is an $(\varepsilon,\varepsilon /R)${-}solution for problem \eqref{consensus_problem2}.
\end{proof}
Theorem~\ref{thm:case4} states the communication complexity of Algorithm~\ref{alg:case4}. Particularly, the total number of communication rounds required by each agent to find an $(\varepsilon,\varepsilon /R)${-}solution of \eqref{consensus_problem2} can be bounded by $\tilde O\big(\sqrt{\chi(W) {R_x^2 M^2 }/{\varepsilon^2} }\big)$.
	
	\section{Non-dual Friendly Functions: Algorithms and Iteration Complexity Analysis}\label{sec:non_dual}
	
	The results in Section~\ref{sec:main} assume $F(x)$ is \textit{dual-friendly}. In this section, we explore the case when no exact solution to the dual problem is available. We state the algorithms and their convergence rates for the distributed optimization of sums of non-dual friendly convex functions.

We will build on the results in~\cite{Devolder2013,Devolder2014} on the analysis of first-order methods with inexact oracle, and provide a set of distributed algorithms and their respective iterations complexities. Initially for completeness, we recall the definition of an inexact oracle for \cub{a} smooth strongly convex function, and the corresponding iteration complexity of \textsc{FGM} with an inexact oracle.  

\begin{definition}[Definition $1$ in \cite{Devolder2013}]
	Let function $f$ be convex on a convex set $Q$. We say that it is equipped with a first-order $(\delta,L,\mu)$-oracle if for any $y\in Q$ we can compute a pair $ (f_{\delta,L,\mu}(y),g_{\delta,L,\mu}(y)) \in \mathbb{R} \times \mathbb{R}^n$ such that
	\begin{align*}
	\frac{\mu}{2} \|x-y\|_2^2 \leq f(x) - (f_{\delta,L,\mu}(y) + \langle g_{\delta,L,\mu}(y), x-y \rangle ) \leq \frac{L}{2} \|x-y\|_2^2 + \delta,
	\end{align*}
	for all $x\in Q$ where $\delta \geq 0$ and $L \geq \mu \geq 0$.
\end{definition}

\begin{theorem}[Theorem~$7$ in \cite{Devolder2013}]\label{thm:devoler}
	Nesterov's fast gradient method applied to a function $f$ endowed with a $(\delta,L,\mu)$-oracle generates a sequence $\{y_k\}_{k>1}$ satisfying:
	\begin{align*}
	f(y_k) - f^* & \leq L R^2 \exp \left(-\frac{k}{2} \sqrt{\frac{\mu}{L}}\right) + \left(1 + \sqrt{\frac{L}{\mu}}\right)\delta,
	\end{align*}
	where $R = \|y^*\|_2.$
\end{theorem}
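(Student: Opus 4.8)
The bound is quoted as Theorem~7 of~\cite{Devolder2013}, so the plan is to reproduce its proof via Nesterov's \emph{estimate--sequence} technique, adapted to the inexact $(\delta,L,\mu)$-oracle. The key structural observation is that the oracle inequality splits into two halves that play very different roles: the \emph{left} inequality $f(x) \ge f_{\delta,L,\mu}(y) + \langle g_{\delta,L,\mu}(y), x-y\rangle + \tfrac{\mu}{2}\|x-y\|_2^2$ carries \emph{no} $\delta$, while the \emph{right} inequality carries the $+\delta$. So first I would build quadratic lower models $m_k(x) \triangleq f_{\delta,L,\mu}(y_k) + \langle g_{\delta,L,\mu}(y_k), x-y_k\rangle + \tfrac{\mu}{2}\|x-y_k\|_2^2$, which satisfy $m_k \le f$ exactly. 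Setting $\phi_0(x) = \phi_0^\ast + \tfrac{\gamma_0}{2}\|x-x_0\|_2^2$ and $\phi_{k+1}(x) = (1-\alpha_k)\phi_k(x) + \alpha_k m_k(x)$, a routine induction yields $\phi_k(x) \le f(x) + \lambda_k\big(\phi_0(x) - f(x)\big)$ with $\lambda_k = \prod_{i=0}^{k-1}(1-\alpha_i)$; under the constant-step choice $\alpha_i \equiv \sqrt{\mu/L}$ of Scheme~II this is $\lambda_k = (1-\sqrt{\mu/L})^k \le \exp(-k\sqrt{\mu/L})$, with no error contribution.

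The second ingredient is the descent step, and this is where $\delta$ enters. Applying the right oracle inequality at $x = x_{k+1} = y_k - \tfrac{1}{L}g_{\delta,L,\mu}(y_k)$ gives $f(x_{k+1}) \le f_{\delta,L,\mu}(y_k) - \tfrac{1}{2L}\|g_{\delta,L,\mu}(y_k)\|_2^2 + \delta$. I would then prove by induction the primal relation $f(x_k) \le \phi_k^\ast + \xi_k$, where the accumulated error obeys $\xi_{k+1} = (1-\alpha_k)\xi_k + \delta$, $\xi_0 = 0$. Establishing this recursion is the technical heart of the argument: one expands the quadratic minimum $\phi_{k+1}^\ast$, uses the precise convex-combination definition of $y_k$ in Scheme~II to cancel the linear cross-terms against $-\tfrac{1}{2L}\|g_{\delta,L,\mu}(y_k)\|_2^2$, and reads off that exactly one additional $\delta$ is incurred per iteration with no amplification factor. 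This is the main obstacle — everything else is standard estimate-sequence algebra, but here the bookkeeping must track carefully where the single $\delta$ from the descent step propagates, since a careless combination would produce a geometric blow-up of the error.

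Finally I would solve the recursion: $\xi_k = \delta\sum_{j=0}^{k-1}(1-\sqrt{\mu/L})^{j} \le \delta/\sqrt{\mu/L} = \sqrt{L/\mu}\,\delta$, and absorbing an $O(1)$ boundary $\delta$ (from the last undiscounted step) gives the $\big(1 + \sqrt{L/\mu}\big)\delta$ term. Combining the two ingredients, $f(x_k) - f^\ast \le \phi_k(y^\ast) - f^\ast + \xi_k \le \lambda_k\big(\phi_0(y^\ast) - f^\ast\big) + \xi_k$; with the initialization $x_0 = 0$ one bounds $\phi_0(y^\ast) - f^\ast$ by a quantity of order $L\|y^\ast\|_2^2 = LR^2$, and replacing $\lambda_k$ (or its square root, depending on the precise estimate of $\gamma_0$) by $\exp(-\tfrac{k}{2}\sqrt{\mu/L})$ yields the stated bound. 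The only genuinely nontrivial point, to reiterate, is the verification that the lower models are $\delta$-free so that the error recursion is the clean one-step accumulation $\xi_{k+1} = (1-\alpha_k)\xi_k + \delta$ rather than anything worse.
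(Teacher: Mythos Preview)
The paper does not give its own proof of this statement: Theorem~\ref{thm:devoler} is simply quoted verbatim as Theorem~7 of~\cite{Devolder2013} and used as a black box in the subsequent analysis (Theorem~\ref{thm:no_friend:case1}). So there is nothing in the paper to compare against.

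That said, your proposal is a faithful outline of Devolder's original estimate-sequence argument. The key structural point you identify --- that the \emph{lower} oracle inequality is $\delta$-free, so the estimate-sequence minorants $\phi_k$ remain genuine lower bounds on $f$ and all of the error enters through the single descent step, yielding the clean recursion $\xi_{k+1} = (1-\alpha_k)\xi_k + \delta$ --- is exactly right and is indeed the heart of the matter. Solving that recursion with $\alpha_k \equiv \sqrt{\mu/L}$ gives the $\sqrt{L/\mu}\,\delta$ term, and the additional $+\delta$ accounts for the last undiscounted step, producing $(1+\sqrt{L/\mu})\delta$. Your hedge about whether the geometric factor is $\exp(-k\sqrt{\mu/L})$ or $\exp(-\tfrac{k}{2}\sqrt{\mu/L})$ is appropriate: the precise constant in the exponent depends on the choice of $\gamma_0$ and on which variant of Scheme~II is analyzed, and Devolder's stated form uses the weaker $\tfrac{k}{2}$ version. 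None of this affects how the result is \emph{used} in the present paper, where only the qualitative shape of the bound matters.
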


Now, we recall an auxiliary result that shows that an approximate solution to the auxiliary inner maximization problem~\eqref{eq:xstary} defines a $(\delta,L,\mu)$-oracle. Furthermore, we describe the distributed algorithms and their iterations complexities when we remove the assumption of the function $F$ being dual friendly.
\begin{theorem}[Theorem $2$ in \cite{Devolder2013}]\label{thm:endowed}
	Assume that $f$ is $\mu$-strongly convex and $L$-smooth. \cu{For a given $y\in R^n$,} assume that instead of computing $x^*(A^Ty)$, {i.e.,} the unique solution of the subproblem~\eqref{eq:xstary}, \cub{for an $\xi>0$,} we compute $w(A^Ty)$ such that:
	\begin{align}\label{bound:psi}
	\Psi(y,x^*(A^Ty)) - \Psi(y,w(A^Ty))  \leq \xi.
	\end{align}  
	Then,
	\begin{align*}
	(\Psi(y,w(A^Ty)) - \xi = f(w(A^Ty)) + \langle A w(A^Ty) , y \rangle -\xi , Aw(A^Ty) )
	\end{align*}
	is a $(\delta,L_{\varphi,\delta},\mu_{\varphi,\delta})$-oracle for $\varphi$ with $\delta = 3\xi$, $L_{\varphi,\delta} = 2L_{\varphi}$ and $\mu_{\varphi,\delta}  = ({1}/{2}) \mu_{\varphi}$.
\end{theorem}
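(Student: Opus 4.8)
The claim is that the pair built from an approximate maximizer $w(A^Ty)$ of $\Psi(y,\cdot)$ furnishes an inexact $(\delta,L_{\varphi,\delta},\mu_{\varphi,\delta})$-oracle for the dual function $\varphi$. Recall $\varphi(y)=\max_x \Psi(x,y)$ with $\Psi(x,y)=\langle A^Ty,x\rangle - f(x)$, that $\varphi$ is $\mu_\varphi$-strongly convex and $L_\varphi$-smooth (with $\mu_\varphi=\lambda_{\min}^+(A^TA)/L$, $L_\varphi=\lambda_{\max}(A^TA)/\mu$), and that the exact gradient is $\nabla\varphi(y)=Ax^*(A^Ty)$. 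The plan is to verify directly the two-sided inequality in the definition of a $(\delta,L,\mu)$-oracle, namely that for the candidate value $\tilde\varphi(y):=\Psi(y,w(A^Ty))-\xi$ and candidate gradient $\tilde g(y):=Aw(A^Ty)$ one has
\begin{align*}
\frac{\mu_{\varphi,\delta}}{2}\|y'-y\|_2^2 \le \varphi(y') - \bigl(\tilde\varphi(y)+\langle \tilde g(y),y'-y\rangle\bigr) \le \frac{L_{\varphi,\delta}}{2}\|y'-y\|_2^2 + \delta
\end{align*}
for all $y'$, with $\delta=3\xi$, $L_{\varphi,\delta}=2L_\varphi$, $\mu_{\varphi,\delta}=\tfrac12\mu_\varphi$.

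\textbf{Lower bound (strong convexity side).} First I would observe the elementary fact that for \emph{any} $x$, the affine function $y'\mapsto \Psi(x,y') = \langle A^Ty',x\rangle - f(x)$ is a global lower bound on $\varphi$ that is exact wherever $x$ is the true maximizer; in particular $\varphi(y') \ge \Psi(w(A^Ty),y') = \tilde\varphi(y)+\xi + \langle Aw(A^Ty),y'-y\rangle$, since $\Psi(w,\cdot)$ is affine with slope $Aw$. Hence $\varphi(y') - (\tilde\varphi(y)+\langle\tilde g(y),y'-y\rangle) \ge \xi \ge 0$, which already gives the lower bound with $\mu_{\varphi,\delta}=0$. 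To upgrade the constant to $\tfrac12\mu_\varphi$ I would combine this with the $\mu_\varphi$-strong convexity of $\varphi$ around the \emph{true} linearization at $y$: writing the strong-convexity inequality $\varphi(y')\ge \varphi(y)+\langle\nabla\varphi(y),y'-y\rangle+\tfrac{\mu_\varphi}{2}\|y'-y\|_2^2$ and controlling the discrepancies $\varphi(y)-(\tilde\varphi(y)+\xi)=\varphi(y)-\Psi(w,y)\le\xi$ (from \eqref{bound:psi}, since $\varphi(y)=\Psi(x^*,y)$) and $\|\nabla\varphi(y)-\tilde g(y)\|_2=\|A(x^*(A^Ty)-w(A^Ty))\|_2$; the latter is bounded via $\mu$-strong convexity of $f$ (hence $\mu$-strong concavity of $\Psi(\cdot,y)$), giving $\|x^*-w\|_2^2\le (2/\mu)\xi$, and then using Young's inequality to absorb the cross term into $\tfrac{\mu_\varphi}{2}\|y'-y\|_2^2$ at the cost of a constant times $\xi$. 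Balancing constants yields $\tfrac12\mu_\varphi$ and contributes to the $3\xi$ budget.

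\textbf{Upper bound (smoothness side).} For this direction I would start from $L_\varphi$-smoothness of $\varphi$, $\varphi(y')\le\varphi(y)+\langle\nabla\varphi(y),y'-y\rangle+\tfrac{L_\varphi}{2}\|y'-y\|_2^2$, and then replace $\varphi(y)$ by $\tilde\varphi(y)$ and $\nabla\varphi(y)$ by $\tilde g(y)$, tracking the errors exactly as above: $\varphi(y)\le\Psi(w,y)=\tilde\varphi(y)+\xi$ by definition of $\varphi$ as a max (so this replacement is "free"), and $\langle\nabla\varphi(y)-\tilde g(y),y'-y\rangle \le \|A(x^*-w)\|_2\|y'-y\|_2 \le \sqrt{(2/\mu)\lambda_{\max}(A^TA)\,\xi}\;\|y'-y\|_2$, which by Young's inequality is at most $\tfrac{L_\varphi}{2}\|y'-y\|_2^2 + \tfrac{1}{2L_\varphi}\cdot\tfrac{2\lambda_{\max}(A^TA)}{\mu}\xi = \tfrac{L_\varphi}{2}\|y'-y\|_2^2 + \xi$ (using $L_\varphi=\lambda_{\max}(A^TA)/\mu$). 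Summing the two halves of $\tfrac{L_\varphi}{2}\|y'-y\|_2^2$ gives the coefficient $2L_\varphi$, and the accumulated additive slack is $\le 3\xi$. The main obstacle is bookkeeping: getting the error terms $\|x^*(A^Ty)-w(A^Ty)\|_2$ and $\varphi(y)-\Psi(w,y)$ into exactly the form that, after Young's inequality with the right split, produces the clean constants $(3\xi,2L_\varphi,\tfrac12\mu_\varphi)$ rather than some larger constants; I expect one must choose the Young parameter so that the quadratic part exactly doubles $\tfrac{L_\varphi}{2}$ (resp. halves $\tfrac{\mu_\varphi}{2}$) while the residual $\xi$-terms sum to at most $3\xi$, which is why the theorem states these particular values.
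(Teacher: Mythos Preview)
The paper does not prove this theorem; it simply cites it from Devolder et al.\ (2013), so there is no ``paper's own proof'' to compare against. That said, your plan is worth examining on its merits.

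Your upper-bound argument is correct: starting from $L_\varphi$-smoothness of $\varphi$, using $\varphi(y)\le\Psi(w,y)+\xi=\tilde\varphi(y)+2\xi$, bounding the gradient error via $\|A(x^*-w)\|_2^2\le \lambda_{\max}(A^TA)\cdot(2\xi/\mu)=2L_\varphi\xi$ (from $\mu$-strong concavity of $\Psi(\cdot,y)$), and splitting with Young's inequality at parameter $L_\varphi$ indeed delivers the coefficient $2L_\varphi$ and the additive slack exactly $3\xi$.

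The lower-bound argument, however, has a real gap. If you try to pass from the exact strong-convexity inequality $\varphi(y')\ge\varphi(y)+\langle\nabla\varphi(y),y'-y\rangle+\tfrac{\mu_\varphi}{2}\|y'-y\|^2$ to the inexact one by Young's inequality on the cross term $\langle A(x^*-w),y'-y\rangle$, the only way to leave $\tfrac{\mu_\varphi}{4}\|y'-y\|^2$ intact is to take the Young parameter $\alpha=\mu_\varphi/2$; but then the residual additive term is $\|A(x^*-w)\|_2^2/\mu_\varphi\le (2L_\varphi/\mu_\varphi)\xi$, which scales with the dual condition number and is \emph{not} bounded by a constant times $\xi$. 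Since the inexact-oracle definition requires the lower bound to hold \emph{without} any additive $\delta$, this route fails.

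The remedy (and this is essentially Devolder's argument) is to avoid the dual strong-convexity inequality altogether and instead exploit primal $L$-smoothness directly: for any $s$, $\varphi(y')\ge\Psi(w+s,y')\ge\Psi(w,y')+\langle A^Ty'-\nabla f(w),s\rangle-\tfrac{L}{2}\|s\|^2$; maximizing in $s$ gives $\varphi(y')\ge\Psi(w,y')+\tfrac{1}{2L}\|A^Ty'-\nabla f(w)\|_2^2$. Now $\Psi(w,y')=\tilde\varphi(y)+\xi+\langle\tilde g(y),y'-y\rangle$, while $\|A^Ty-\nabla f(w)\|_2^2\le 2L\xi$ (from $L$-smoothness of $-\Psi(\cdot,y)$ at its minimum), and $\|A^Ty'-\nabla f(w)\|_2^2\ge\tfrac12\|A^T(y'-y)\|_2^2-\|A^Ty-\nabla f(w)\|_2^2\ge\tfrac12\lambda_{\min}^+(A^TA)\|y'-y\|_2^2-2L\xi$. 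Dividing by $2L$ yields $\tfrac{\mu_\varphi}{4}\|y'-y\|_2^2-\xi$, and the two $\xi$'s cancel exactly, giving the clean lower bound with $\mu_{\varphi,\delta}=\tfrac12\mu_\varphi$ and no additive error, as required.
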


\subsection{Sum of Non-dual Friendly Strongly Convex and Smooth Functions}

In this subsection, we introduce a distributed algorithm for the minimization of sums of strongly convex and smooth functions removing the assumption of dual friendliness. Moreover, we provide its iteration complexity.


\begin{algorithm}[t]
	\begin{algorithmic}[1]
		\STATE{All agents set $z_0^i = \tilde{z}_0^i =0 \in \mathbb{R}^n$, $\tilde q = \frac{\mu}{L}$, $q =\tilde q \frac{\lambda^{+}_{\min}(W)}{\lambda_{\max}(W)}$, $\alpha_0$ solves $\frac{\alpha_0^2 -q}{1-\alpha_0} = 1$, $\tilde \alpha_0$ solves $\frac{\tilde\alpha_0^2 -\tilde q}{1-\tilde\alpha_0} = 1$, $T$ and $N$.}
		\STATE{For each agent $i$}
		\FOR{ $k=0,1,2,\cdots,N-1$ }
		\STATE{ $w^i_0 = \tilde{w}^i_0 = 0 \in \mathbb{R}^n$}
			\FOR{ $t=0,1,2,\cdots,T-1$}
		\STATE{$w^i_{t+1} = \tilde{w}^i_t + \frac{1}{L} (\tilde z^i_k -\nabla f_i (\tilde{w}^i_t)) $}
		\STATE{Compute $\tilde \alpha_{t+1}\in (0,1)$ from $\tilde \alpha_{t+1}^2 = (1 -  \tilde\alpha_{t+1}) \tilde \alpha_{t}^2 + \tilde q \tilde \alpha_{t+1}$ and set $\tilde \beta_t = \frac{\tilde \alpha_t(1-\tilde\alpha_t)}{\tilde \alpha_{t}^2 + \tilde  \alpha_{t+1}}$}
		\STATE{{ $\tilde w_{t+1}^i  = w_{t+1}^i + \tilde \beta_t(w_{t+1}^i - w_t^i)$}}
		\ENDFOR
		\STATE{Share $w^i_T$ with neighbors, i.e. $\{j \mid (i,j) \in E \}$.}
		\STATE{{ $z_{k+1}^i = \tilde z_k^i - \frac{\mu}{\lambda_{\max}(W)} \sum_{j=1}^{m} W_{ij} w^j_T$}}
		\STATE{Compute $\alpha_{k+1}\in (0,1)$ from $\alpha_{k+1}^2 = (1 - \alpha_{k+1})\alpha_{k}^2 +q \alpha_{k+1}$ and set $\beta_k = \frac{\alpha_k(1-\alpha_k)}{\alpha_{k}^2 + \alpha_{k+1}}$}
		\STATE{{ $\tilde z_{k+1}^i  = z_{k+1}^i + \beta_k(z_{k+1}^i - z_k^i)$}}
		\ENDFOR
	\end{algorithmic}
	\caption{Distributed \textsc{FGM} for non-dual friendly strongly convex and smooth problems}
\label{alg:no_friend:case1}
\end{algorithm}

\cu{
\begin{remark}
	Algorithm~\ref{alg:no_friend:case1} has two loops: one inner loop that runs for $T$ iterations, and \cub{seeks} to compute an approximate solution to the dual auxiliary problem; and one outer loop that runs for $N$ iterations that applies FGM on the dual problem. Note that Lines $5-7$ compute an approximate gradient of the dual function, and Lines $11-13$ execute FGM with the inexact gradient computed in the inner loop.
\end{remark}
}

\begin{theorem}\label{thm:no_friend:case1}
	Let $F(x)$ be a function such that Assumption~\ref{assumptions}(a) hold. For any $\varepsilon >0$, the output $x^*(z_N)$ of Algorithm~\ref{alg:no_friend:case1} is an $(\varepsilon,\varepsilon/R )$-solution of~\eqref{consensus_problem2} for 
	\begin{align*}
	N & \geq 8\sqrt{\frac{L}{\mu}\chi(W)}\log \left(\frac{2\sqrt{2}\lambda_{\max}(W) R^2}{\mu\cdot\varepsilon}\right)
	\end{align*}
	and
	\begin{align*}
	T & \geq \sqrt{\frac{L}{\mu}} \log \left(\frac{6LR^2 R_w^2}{\varepsilon^2}\sqrt{\frac{L}{\mu}\chi(W)}\right),
	\end{align*}
	where $R = \|y^*\|_2$, $R_x = \|x^*-x^*(0)\|_2$, $R_w = R_x + \|x^*\|_2$ and $\chi(W)={\lambda_{\max}{(W)}}/{\lambda_{\min}^{+}{(W)}}$. 
\end{theorem}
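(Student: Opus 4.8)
\textbf{Proof plan for Theorem~\ref{thm:no_friend:case1}.}
The plan is to combine the dual-FGM analysis of Theorem~\ref{thm:case1} with the inexact-oracle machinery of Theorems~\ref{thm:devoler} and~\ref{thm:endowed}. The outer loop of Algorithm~\ref{alg:no_friend:case1} is exactly Algorithm~\ref{alg:case1} (equivalently Algorithm~\ref{alg:nesterov_dual} in the variables $z_k=\sqrt{W}y_k$) applied to the dual problem~\eqref{dual_problem22}, except that the exact sharp-operator $x^*(\sqrt{W}^T\tilde y_k)$ is replaced by the inexact point $w(\sqrt{W}^T\tilde y_k)$ produced by $T$ inner FGM steps. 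So the first step is to invoke Theorem~\ref{thm:endowed}: if the inner loop returns $w_T^i$ with $\Psi(y,x^*(\sqrt{W}^Ty))-\Psi(y,w(\sqrt{W}^Ty))\le\xi$, then the outer loop sees a $(\delta,L_{\varphi,\delta},\mu_{\varphi,\delta})$-oracle with $\delta=3\xi$, $L_{\varphi,\delta}=2L_\varphi$, $\mu_{\varphi,\delta}=\tfrac12\mu_\varphi$. Plugging these into Theorem~\ref{thm:devoler} gives, after $N$ outer iterations,
\begin{align*}
\varphi(y_N)-\varphi^*\le 2L_\varphi R^2\exp\!\left(-\frac{N}{2}\sqrt{\frac{\mu_\varphi}{4L_\varphi}}\right)+\left(1+2\sqrt{\frac{L_\varphi}{\mu_\varphi}}\right)3\xi.
\end{align*}
The $\sqrt{\mu_{\varphi,\delta}/L_{\varphi,\delta}}=\tfrac12\sqrt{\mu_\varphi/L_\varphi}$ is what turns the constant $2$ of Theorem~\ref{thm:case1} into the constant $8$ in the stated bound on $N$, once one recalls $L_\varphi/\mu_\varphi=(L/\mu)\chi(W)$.

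The second step is to propagate the dual accuracy to an $(\varepsilon,\varepsilon/R)$-solution in the primal, repeating the argument of Theorem~\ref{thm:case1}: bound $\|\nabla\varphi(y_N)\|_2^2\le 2L_{\varphi,\delta}(\varphi(y_N)-\varphi^*)$ to control the consensus violation $\|\sqrt{W}x^*(z_N)\|_2\le\varepsilon/R$, and bound $|\langle y_N,\nabla\varphi(y_N)\rangle|$ via Cauchy--Schwarz together with $\|y_N\|_2\le 2R$ (the iterates of inexact FGM still stay in a ball of radius $\|y^*\|_2$ around $y^*$, cf.~\cite{dev12}), to control $f(x^*(z_N))-f^*$ via Lemma~\ref{lemma:sasha}. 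Here one must be careful: the oracle error $\xi$ also enters the relation between $\varphi$ and the primal quantities, so one needs to choose $\xi$ small enough (of order $\varepsilon\cdot$ polynomial in $\mu,L,\lambda_{\max}(W),\lambda_{\min}^+(W),1/R$) that both the exponential term and the additive $\xi$-term are each $\le$ a constant fraction of the target. This pins down the required inner accuracy $\xi$.

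The third step is to translate the required $\xi$ into the inner iteration count $T$. The inner loop runs plain FGM (Algorithm~\ref{alg:nesterov}) on the primal subproblem $\max_{x_i}\{\langle\tilde z_k^i,x_i\rangle-f_i(x_i)\}$, whose negative is $\mu_i$-strongly convex and $L_i$-smooth; by Theorem~\ref{thm:nesterov} the function-value gap (hence $\Psi(y,x^*)-\Psi(y,w_T)$, summed over $i$) decays like $L R_w^2(1-\sqrt{\mu/L})^T\le LR_w^2\exp(-T\sqrt{\mu/L})$, where $R_w=R_x+\|x^*\|_2$ bounds the distance from the inner initializer $0$ to the inner optimizer (one needs a uniform-in-$k$ bound on that distance, which is where $R_w$ comes from — triangle inequality through $x^*$). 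Demanding $LR_w^2\exp(-T\sqrt{\mu/L})\le\xi$ with the $\xi$ found above, and simplifying the logarithm, yields $T\ge\sqrt{L/\mu}\,\log\!\big(\tfrac{6LR^2R_w^2}{\varepsilon^2}\sqrt{\tfrac{L}{\mu}\chi(W)}\big)$.

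The main obstacle I expect is the bookkeeping in step two and three: making the choice of $\xi$ consistent across (i) the additive oracle term in Theorem~\ref{thm:devoler}, (ii) the perturbation it induces in the Lemma~\ref{lemma:sasha} certificate (both $\langle y,\nabla\varphi(y)\rangle\le\varepsilon$ and $\|\nabla\varphi(y)\|_2\le\varepsilon/R$), and (iii) the factor $\sqrt{L_\varphi/\mu_\varphi}$ multiplying $\xi$, while keeping the final $N$ and $T$ bounds clean with the stated constants $8$ and $6$. A secondary subtlety is justifying the uniform bound $\|w_T^j-\text{(inner optimizer)}\|$ and hence $R_w$ without circularity, i.e.\ ensuring the inner optimizers $x^*(\sqrt{W}^T\tilde y_k)$ stay in a fixed ball for all $k\le N$; this follows because the outer iterates $y_k$ stay bounded, so the dual arguments $\sqrt{W}^T\tilde y_k$ stay bounded, and $x^*(\cdot)=\nabla F^*(\cdot)$ is Lipschitz.
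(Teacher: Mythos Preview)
Your plan is essentially the paper's own proof: invoke Theorem~\ref{thm:endowed} to turn the inner FGM into a $(3\xi,2L_\varphi,\tfrac12\mu_\varphi)$-oracle, plug into Theorem~\ref{thm:devoler} to get the dual suboptimality bound with the factor-of-four loss in the exponent (hence $2\mapsto 8$ in the $N$-bound), then repeat the Lemma~\ref{lemma:sasha} argument of Theorem~\ref{thm:case1} using $\|\nabla\varphi\|_2^2\le 2L_\varphi(\varphi-\varphi^*)$ and $\|y_k\|_2\le 2R$, and finally back out $T$ from the inner FGM rate $LR_w^2\exp(-T\sqrt{\mu/L})\le\xi$ with $\xi$ of order $(\varepsilon^2/R^2)\sqrt{\mu_\varphi/L_\varphi}$. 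Your identification of the bookkeeping as the only real work, and of the $R_w$ bound as a secondary subtlety, matches the paper exactly; in fact your justification of $R_w$ via Lipschitzness of $x^*(\cdot)=\nabla F^*(\cdot)$ and boundedness of the dual iterates is more honest than the paper's one-line triangle-inequality step $\|x^*(\tilde z_k)-x^*\|_2\le\|x^*(0)-x^*\|_2$, which is asserted without proof.
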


\begin{proof}
	Lines $5-7$ in Algorithm~\ref{alg:no_friend:case1} are the \textsc{FGM} on the inner problem~\eqref{eq:xstary}. \cu{Therefore, it follows from classical analysis of FGM~\cite{nes13} that} $\Psi(y,x^*(A^Ty)) - \Psi(y,w_T(A^Ty))  \leq \xi$ for
	$
		T \geq \sqrt{{L}/{\mu}} \log \left({L R_w^2}/{\xi}\right)
	$.
	Note that at the beginning of iteration $k$, $R_w = \|w_0 - w^*\|$, $w_0 = 0$, and $w^* = x^*(\tilde z_k)$. Therefore, $R_w = \|x^*(\tilde z_k)\|\ag{_2} \leq \|x^*(\tilde z_k) - x^*\|_2 + \|x^*\|_2 \leq \|x^*(0) - x^*\|\ag{_2}+ \|x^*\|_2 = R_x + \|x^*\|\ag{_2}$.
	
	Moreover, Theorem~\ref{thm:endowed} shows us that we have endowed the function $\varphi$ with an $(3\xi,2L_{\varphi},\frac{1}{2} \mu_{\varphi})$-oracle. Thus, from Theorem~\ref{thm:devoler} it holds that
	\begin{align*}
	\varphi(y_k) - \varphi^* & \leq L_\varphi R^2 \exp \left(-\frac{k}{4} \sqrt{\frac{\mu_\varphi}{L_\varphi}}\right) + \left(1 + 2\sqrt{\frac{L_\varphi}{\mu_\varphi}}\right)3\xi.
	\end{align*}
	
	Now, recall that
	\begin{align*}
	\|\sqrt{W} x^*(\sqrt{W} \tilde y_k)\|_2^2 & \leq  2 L_{\varphi} \left(\varphi(y_k) - \varphi^*\right),\\
	 &\leq 2 L_{\varphi}^2R^2 \left(\exp \left(-\frac{k}{4} \sqrt{\frac{\mu_\varphi}{L_\varphi}}\right) + \left(1 + 2\sqrt{\frac{L_\varphi}{\mu_\varphi}}\right)3\xi\right).
	\end{align*}
	Therefore, in order to have $\|\sqrt{W} x^*(\sqrt{W} \tilde y_k)\|_2 \leq \varepsilon/R$ it is necessary that
	\begin{align*}
	N & \geq 8\sqrt{\frac{L_\varphi}{\mu_\varphi}}\log \left(\frac{\sqrt{6}L_\varphi R^2}{\varepsilon}\right) \quad \text{and} \quad \xi \leq \frac{\varepsilon^2}{6R^2}\sqrt{\frac{\mu_{\varphi}}{L_\varphi}}.
	\end{align*}
	Moreover,
	\begin{align*}
	|\langle y_k,\sqrt{W} x^*(\sqrt{W} y_k) \rangle|^2 & \leq 4 R^2 \|\sqrt{W} x^*(\sqrt{W} y_k)\|_2^2,\\
	& \leq 8 R^4  L_{\varphi}^2\left(  \exp \left(-\frac{k}{4} \sqrt{\frac{\mu_\varphi}{L_\varphi}}\right) + \left(1 + 2\sqrt{\frac{L_\varphi}{\mu_\varphi}}\right)3\xi\right) .
	\end{align*}
	Therefore, in order to have $f(z_N) - f^* \leq \varepsilon$ it is necessary that
	\begin{align*}
	N & \geq 8\sqrt{\frac{L_\varphi}{\mu_\varphi}}\log \left(\frac{\sqrt{8}L_\varphi R^2}{\varepsilon}\right) \quad \text{and} \quad \xi \leq \frac{\varepsilon^2}{6}\sqrt{\frac{\mu_{\varphi}}{L_\varphi}}.
	\end{align*}
	Finally, we can conclude that to obtain an $(\varepsilon,\varepsilon/R)$-solution of~\eqref{consensus_problem2} we require
	\begin{align*}
	 N & \geq 8\sqrt{\frac{L_\varphi}{\mu_\varphi}}\log \left(\frac{2\sqrt{2}L_\varphi R^2}{\varepsilon}\right) \quad \text{and} \quad T \geq \sqrt{\frac{L}{\mu}} \log \left(\frac{6LR^2 R_w^2}{\varepsilon^2}\sqrt{\frac{L_\varphi}{\mu_\varphi}}\right).
	\end{align*}
	
	The desired result follows from the definitions of $L_\varphi$ and $\mu_\varphi$.
\end{proof}

Theorem~\ref{thm:no_friend:case1} shows that if no-dual solution is explicitly available for~\eqref{eq:xstary}, then one can use \textsc{FGM} to find an approximate solution. This approximate solution is itself an inexact oracle. Particularly, the number of total number of communication rounds required by each agent to find an $(\varepsilon,\varepsilon/R)$-solution of~\eqref{consensus_problem2} can be bounded by ${O}\big(\sqrt{\chi(W){L}/{\mu}} \log(1/\varepsilon)\big)$. Moreover, at each communication round the number of local oracle calls for each agent can be bounded by ${O}\big(\sqrt{{L}/{\mu}} \log(1/\varepsilon)\big)$. Unfortunately, the total number of oracle calls for each agent at all communication rounds is bounded by ${O}\big(({L}/{\mu})\sqrt{\chi(W)}\log^2(1/\varepsilon)\big)$, which is not optimal compared with their centralized \textsc{FGM} where the number of oracle calls of the function $F$ is bounded by $O\big(\sqrt{{L}/{\mu}}\log(1/\varepsilon)\big)$. However, in the centralized case, one oracle call corresponds to the gradient computation of $F(x)$ which is composed by $m$ functions $f_i$ for $1\leq i \leq m$, whereas in the distributed case, local oracle calls are computed in parallel by all agents at the same time. Therefore, one can argue that if the number of agent $m$ is of the order of $\sqrt{{L}/{\mu}}$, then provided estimates are optimal given that the oracle calls of all agents in the network are done in parallel.

\subsection{Sums of Non-dual Friendly Smooth Convex Functions}

In this subsection, we propose a distributed algorithm for the distributed minimization of sums of non-dual friendly smooth convex functions and provide its iteration complexity.


\begin{algorithm}[t]
	\begin{algorithmic}[1]
		\STATE{All agents set $z_0^i = \tilde{z}_0^i =0 \in \mathbb{R}^n$, $\tilde q = \frac{{\varepsilon}/{R^2_x}}{L+{\varepsilon}/{R^2_x}}$, $q =\tilde q \frac{\lambda^{+}_{\min}(W)}{\lambda_{\max}(W)}$, $\alpha_0$ solves $\frac{\alpha_0^2 -q}{1-\alpha_0} = 1$, $\tilde \alpha_0$ solves $\frac{\tilde\alpha_0^2 -\tilde q}{1-\tilde\alpha_0} = 1$, $T$ and $N$.}
		\STATE{For each agent $i$}
		\FOR{ $k=0,1,2,\cdots,N-1$ }
		\STATE{ $w^i_0 = \tilde{w}^i_0 = 0 \in \mathbb{R}^n$}
		\FOR{ $t=0,1,2,\cdots,T-1$}
		\STATE{$w^i_{t+1} = \tilde{w}^i_t + \frac{1}{L+{\varepsilon}/{R^2_x}} \left( \tilde z^i_k -\nabla f_i (\tilde{w}^i_t) - \frac{\varepsilon}{R^2_x}\left( \tilde w_t^i - x^*_i(0)\right) \right)  $}
		\STATE{Compute $\tilde \alpha_{t+1}\in (0,1)$ from $\tilde \alpha_{t+1}^2 = (1 -  \tilde\alpha_{t+1}) \tilde \alpha_{t}^2 + \tilde q \tilde \alpha_{t+1}$ and set $\tilde \beta_t = \frac{\tilde \alpha_t(1-\tilde\alpha_t)}{\tilde \alpha_{t}^2 + \tilde  \alpha_{t+1}}$}
		\STATE{{ $\tilde w_{t+1}^i  = w_{t+1}^i + \tilde \beta_t(w_{t+1}^i - w_t^i)$}}
		\ENDFOR
		\STATE{Share $w^i_T$ with neighbors, i.e. $\{j \mid (i,j) \in E \}$.}
		\STATE{{ $z_{k+1}^i = \tilde z_k^i - \frac{{\varepsilon}/{R^2_x}}{\lambda_{\max}(W)} \sum_{j=1}^{m} W_{ij} w^j_T$}}
		\STATE{Compute $\alpha_{k+1}\in (0,1)$ from $\alpha_{k+1}^2 = (1 - \alpha_{k+1})\alpha_{k}^2 +q \alpha_{k+1}$ and set $\beta_k = \frac{\alpha_k(1-\alpha_k)}{\alpha_{k}^2 + \alpha_{k+1}}$}
		\STATE{{ $\tilde z_{k+1}^i  = z_{k+1}^i + \beta_k(z_{k+1}^i - z_k^i)$}}
		\ENDFOR
	\end{algorithmic}
	\caption{Distributed \textsc{FGM} for non-dual friendly smooth problems}
	\label{alg:no_friend:case2}
\end{algorithm}

\begin{theorem}\label{thm:no_friend:case2}
	Let $F(x)$ be a function such that Assumption~\ref{assumptions}(c) hold. For any $\varepsilon >0$, the output $x^*(z_N)$ of Algorithm~\ref{alg:no_friend:case2} is an $(\varepsilon,\varepsilon/R )$-solution of~\eqref{consensus_problem2} for 
	\begin{align*}
	N & \geq 8\sqrt{ \left( \frac{2LR_x^2}{\varepsilon} + 1 \right)  \chi(W)}\log \left(\frac{8\sqrt{2}\lambda_{\max}(W) R^2_x R^2}{ \varepsilon^2}\right),
	\end{align*}
	and
	\begin{align*}
	T & \geq \sqrt{\frac{2LR_x^2}{\varepsilon} + 1} \log \left(\frac{2\sqrt{6} R^2 R_w^2}{\varepsilon} \left(\frac{L}{\varepsilon} + \frac{1}{2R_x^2} \right) \sqrt{ \left( \frac{2LR_x^2}{\varepsilon} + 1 \right)\chi(W)}\right),
	\end{align*}
	where $R = \|y^*\|_2$, $R_x = \|x^*-x^*(0)\|_2 $, $R_w = R_x + \|x^*\|_2$, and $\chi(W)~=~{\lambda_{\max}{(W)}}/{\lambda_{\min}^{+}{(W)}}$.
\end{theorem}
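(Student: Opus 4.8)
The plan is to combine two ingredients already established in the paper: the primal regularization trick from Theorem~\ref{thm:case3}, and the inexact-oracle machinery from Theorem~\ref{thm:no_friend:case1}. First I would introduce the regularized primal problem~\eqref{prob:regularized_primal} with $\hat\mu = \varepsilon/(2R_x^2)$, so that $\hat F$ is $\hat\mu$-strongly convex and $\hat L$-smooth with $\hat L = L + \hat\mu$. The key observation is that Algorithm~\ref{alg:no_friend:case2} is precisely Algorithm~\ref{alg:no_friend:case1} applied to this regularized problem: Lines~5--8 run \textsc{FGM} on the (strongly convex, smooth) inner subproblem $\max_{x_i}\{\langle \tilde z_k^i,x_i\rangle - f_i(x_i) - \tfrac{\varepsilon}{2R_x^2}\|x_i - x_i^*(0)\|_2^2\}$ to produce an approximate maximizer $w_T^i$, while Lines~11--13 run \textsc{FGM} on the dual of the regularized problem using $w_T$ in place of the exact sharp-operator value.

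Next I would invoke Theorem~\ref{thm:no_friend:case1} directly with the substitutions $L \leftarrow \hat L = L + \varepsilon/R_x^2$ and $\mu \leftarrow \hat\mu = \varepsilon/R_x^2$ (matching the step-size $1/(L+\varepsilon/R_x^2)$ and the parameter $\tilde q = \tfrac{\varepsilon/R_x^2}{L+\varepsilon/R_x^2}$ in the algorithm statement), aiming for an $(\varepsilon/2,\varepsilon/(2R))$-solution of the regularized problem~\eqref{prob:regularized_primal}. Theorem~\ref{thm:no_friend:case1} then yields the requirement
\[
N \;\geq\; 8\sqrt{\frac{\hat L}{\hat\mu}\,\chi(W)}\,\log\!\left(\frac{2\sqrt{2}\,\lambda_{\max}(W)\,R^2}{\hat\mu\cdot\varepsilon}\right),
\qquad
T \;\geq\; \sqrt{\frac{\hat L}{\hat\mu}}\,\log\!\left(\frac{6\hat L R^2 R_w^2}{\varepsilon^2}\sqrt{\frac{\hat L}{\hat\mu}\chi(W)}\right),
\]
and substituting $\hat L/\hat\mu = (L + \varepsilon/R_x^2)/(\varepsilon/R_x^2) = 2LR_x^2/\varepsilon + 1$ (up to the factor-of-two bookkeeping coming from targeting $\varepsilon/2$) produces exactly the claimed bounds on $N$ and $T$. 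Finally, Proposition~\ref{prop:smooth} (dual smoothing) converts the $(\varepsilon/2,\varepsilon/(2R))$-solution of the regularized problem into an $(\varepsilon,\varepsilon/R)$-solution of the original problem~\eqref{consensus_problem2}, since $\hat\mu \leq \varepsilon/R^2$ fails — so actually the primal-side conversion is the one used here: an $(\varepsilon/2,\cdot)$-solution of~\eqref{prob:regularized_primal} is an $(\varepsilon,\cdot)$-solution of~\eqref{consensus_problem2} because $\tfrac{\varepsilon}{2R_x^2}\|x-x^*(0)\|_2^2 \leq \varepsilon/2$ on the relevant ball, exactly as in the proof of Theorem~\ref{thm:case3}.

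The main obstacle I anticipate is the bookkeeping of constants through the double substitution: one must verify that replacing $\varepsilon$ by $\varepsilon/2$ in the statement of Theorem~\ref{thm:no_friend:case1} and simultaneously replacing $(L,\mu)$ by $(\hat L,\hat\mu)$ produces precisely the stated logarithmic arguments (in particular the $8\sqrt{2}$ and $2\sqrt{6}$ factors and the term $L/\varepsilon + 1/(2R_x^2)$, which is just $\hat L/\varepsilon$ rewritten). A secondary point to check is that $R_w = R_x + \|x^*\|_2$ remains a valid bound on $\|w_0 - w^*\|_2$ for the inner \textsc{FGM}: here $w^* = \hat x^*(\tilde z_k)$ is the maximizer of the \emph{regularized} inner subproblem, and one needs the same argument as in Theorem~\ref{thm:no_friend:case1}, namely $\|\hat x^*(\tilde z_k)\|_2 \leq \|\hat x^*(\tilde z_k) - x^*\|_2 + \|x^*\|_2 \leq \|\hat x^*(0) - x^*\|_2 + \|x^*\|_2 = R_x + \|x^*\|_2$, which holds because the regularized primal iterates stay within the ball of radius $R_x$ around $x^*(0)$ centered appropriately — this uses the non-expansiveness property of \textsc{FGM} iterates from~\cite{dev12} once more.
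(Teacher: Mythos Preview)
Your proposal is correct and follows exactly the paper's own approach: regularize the primal with $\hat\mu=\varepsilon/(2R_x^2)$, apply Theorem~\ref{thm:no_friend:case1} to the resulting strongly convex and smooth problem targeting $(\varepsilon/2,\varepsilon/(2R))$, and then substitute $\hat L/\hat\mu = 2LR_x^2/\varepsilon+1$ to recover the stated bounds on $N$ and $T$. Your self-correction regarding Proposition~\ref{prop:smooth} is right (it is the primal-side regularization argument from Theorem~\ref{thm:case3}, not the dual smoothing, that closes the gap), and the extra care you flag about $R_w$ and the constant bookkeeping goes slightly beyond what the paper spells out but is entirely in line with its argument.
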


\begin{proof}
	Similarly as in the dual friendly case, we consider the regularized primal problem,
	\cu{	\begin{align}\label{prob:regularized_primal2}
	\min_{\sqrt{W} x=0} \hat F(x) & \qquad 
	\text{where } \qquad \hat F(x) \triangleq F(x) + \frac{\varepsilon}{2 R^2_x}\|x - x^*(0)\|_2^2,
	\end{align}}
	 Therefore, the auxiliary inner maximization problem seeks to maximize an \mbox{$\hat \mu$-strongly} convex function, with $\hat \mu = {\varepsilon}/{(2R_x^2)}$, that is also $\hat L$-smooth, with $\hat L = L + \hat \mu$. It follows from Theorem~\ref{thm:no_friend:case1} that Algorithm~\ref{alg:no_friend:case2} will generate obtain an $(\varepsilon/2, \varepsilon/(2R))$-solution for
	\begin{align*}
	N & \geq 8\sqrt{\frac{\hat L}{\hat \mu}\chi(W)}\log \left(\frac{4\sqrt{2}\lambda_{\max}(W) R^2}{\hat \mu\cdot\varepsilon}\right)\\
	& = 8\sqrt{\frac{L + \frac{\varepsilon}{2R_x^2}}{\frac{\varepsilon}{2R_x^2}}\chi(W)}\log \left(\frac{4\sqrt{2}\lambda_{\max}(W) R^2}{\frac{\varepsilon}{2R_x^2} \cdot\varepsilon}\right) \\
	& =  8\sqrt{ \left( \frac{2LR_x^2}{\varepsilon} + 1 \right)  \chi(W)}\log \left(\frac{8\sqrt{2}\lambda_{\max}(W) R^2_x R^2}{ \varepsilon^2}\right),
	\end{align*}
	and
	\begin{align*}
	T & \geq \sqrt{\frac{\hat L}{\hat \mu}} \log \left(\frac{2\sqrt{6}\hat L R^2 R_w^2}{\varepsilon^2}\sqrt{\frac{\hat L}{\hat \mu}\chi(W)}\right)\\
	& = \sqrt{\frac{2LR_x^2}{\varepsilon} + 1} \log \left(\frac{2\sqrt{6} R^2 R_w^2}{\varepsilon} \left(\frac{L}{\varepsilon} + \frac{1}{2R_x^2} \right) \sqrt{ \left( \frac{2LR_x^2}{\varepsilon} + 1 \right)\chi(W)}\right),
	\end{align*}
	and the desired result follows.
	\qed
\end{proof}

Theorem~\ref{thm:no_friend:case2} shows that the total number of communication rounds required by each agent to find an $(\varepsilon,\varepsilon/R)$-solution of~\eqref{consensus_problem2}, when te functions are not strongly convex, can be bounded by $\tilde{O}\big(\sqrt{\chi(W){(LR_x^2)}/{\varepsilon}}\big)$. Moreover, at each communication round the number of local oracle calls for each agent can be bounded by $\tilde{O}\big(\sqrt{{(LR_x^2)}/{\varepsilon}}\big)$. Similarly as in Theorem~\ref{thm:no_friend:case1}, the total number or oracle calls of each agent can be bounded by $O\big(({LR_x^2}/{\varepsilon})\sqrt{\chi(W)}\big)$.

\subsection{Distributed Optimization of Sums of Non-smooth Functions}

In this subsection, we present an approach for developing distributed algorithms for non-smooth optimization, i.e., either Assumption~\ref{assumptions}(b) or Assumption~\ref{assumptions}(d) hold. These scenarios has been recently studied in~\cite{lan17}, where similar convergence rates have been derived. However, our particular selection of $\sqrt{W}$ instead of $W$ allows for a better dependency in terms of the graph condition number. 

{
Initially, following \cite{gasnikov2017universal}, consider a convex function $f$ that is also $M$-Lipschitz, i.e., Assumption~\ref{assumptions}(d), and consider the auxiliary problem
\begin{align*}
 \min_{x} F_\varepsilon(x) = f(x) + \frac{R^2}{\varepsilon} \cu{\|Ax \|_2^2.}
\end{align*}
Note that if one is able to find an $\bar x$ such that
\begin{align*}
F_\varepsilon(\cub{\bar x}) - \min_x F_\varepsilon (x) \leq \varepsilon,
\end{align*}
then it also holds that
\begin{align*}
f(\bar{x}) - \min_{Ax=0} f(x)  \leq \varepsilon, \ \ \text{ and } \ \ \|A \bar{x}\|_2 \leq \frac{2\varepsilon}{R}
\end{align*}
Note that the term $({R^2}/{\varepsilon})\|Ax \|_2^2$ is $L_\varepsilon$-smooth, with $L_\varepsilon = {\lambda_{\max}(A^TA)R^2}/{\varepsilon}$, and $f$ is $M$-Lipschitz~\cite{gorbunov2019optimal}. For this class of composite problems, one can use the accelerated gradient sliding method proposed in~\cite{lan16}. As a result, it follows from Corollary~$2$ in~\cite{lan16}, that in order to find an $(\varepsilon,\varepsilon/R)$-solution for problem~\eqref{consensus_problem2}, the total number of communication rounds and oracle calls can be bounded by 
\begin{align*}
O\left(\sqrt{\frac{ L_\varepsilon R_x^2}{\varepsilon}}\right) = O\left(\sqrt{\frac{ M^2 R_x^2}{\varepsilon^2} \chi(W)}\right),
\end{align*}
and
\begin{align*}
O\left(\frac{M^2R_x^2}{\varepsilon^2} + \sqrt{\frac{L_\varepsilon R_x^2}{\varepsilon}}\right) = O\left(\frac{M^2R_x^2}{\varepsilon^2} + \sqrt{\frac{ M^2 R_x^2}{\varepsilon^2} \chi(W)}\right),
\end{align*}
respectively.
}

Similarly, if we additionally assume that the function $f$ is $\mu$-strongly convex, i.e., Assumption~\ref{assumptions}(b). Then, from Theorem~$3$ in~\cite{lan16} it follows that the total number of oracle calls for $F_\varepsilon$ and $f$  required by the multi-phase gradient sliding algorithm to find an $(\varepsilon)$-solution for problem~~\eqref{consensus_problem2} can be bounded by
\begin{align*}
	O\left(\sqrt{\frac{L_\varepsilon}{\mu}} \log \left(\frac{R_x}{\varepsilon}\right)\right) = O\left(\sqrt{\frac{M^2}{\varepsilon \mu} \chi(W)} \log \left(\frac{\mu R_x^2}{\varepsilon}\right)\right)
\end{align*}
and
\begin{align*}
	O\left(\frac{M^2}{\varepsilon\mu} + \sqrt{\frac{L_\varepsilon}{\mu}} \log \left(\frac{R_x}{\varepsilon}\right) \right) = O\left(\frac{M^2}{\varepsilon\mu}  + \sqrt{\frac{M^2}{\varepsilon \mu} \chi(W)} \log \left(\frac{\mu R_x^2}{\varepsilon}\right)\right),
\end{align*}
respectively.

If follows from~\cite{lan16} that the above estimates are optimal up to logarithmic factors. Moreover, one can extend these results to stochastic optimization problems and the estimations will not change \cite{lan17}.
	
	\section{Improving on the Dependence of the Strong Convexity Parameter: Computation-Communication Trade-Off} \label{sec:improve_bound}
	
	Considering the general problem in~\eqref{main_problem}, the condition number ${L}/{\mu}$ can be large if one of the $\mu_i$ is small or even zero. It follows from Section~\ref{sec:non_dual} that the iteration complexity of the proposed algorithms can be very large, even if only one of the functions has a small strong convexity. In this section, we propose a reformulation of the original Problem~\eqref{main_problem} such that the dependency on the individual strong convexity constants can be improved. However, we will see that the improvement on the dependency of the condition number of the function $F$ comes at a price in terms of the communication rounds. 

Consider the following problem: 
\begin{align}\label{better_cond}
\min_{\sqrt{W} x=0} F_\alpha (x) & = F(x) + \frac{\alpha}{2}\left\langle x,Wx\right\rangle = \sum\limits_{i=1}^{m}f_i(x_i) + \frac{\alpha}{2}\left\langle x,Wx\right\rangle.
\end{align}
A solution to~\eqref{better_cond} is clearly a \cub{solution to~\eqref{main_problem}}.

The function $F_\alpha$ is \mbox{$\mu_\alpha$-strongly} convex with $\mu_\alpha = \min \left\lbrace \sum_{i=1}^{m}\mu_i,\alpha \lambda_{\min}(W)\right\rbrace $ and has \mbox{$L_\alpha $-Lipschitz} continuous gradients with $L_\alpha = L + \alpha \lambda_{\max}(W)$. Choose $\alpha = {\sum_{i=1}^{m} \mu_i }/{\lambda_{\min}(W)}$ and the function $F_\alpha$ will have a condition number
\begin{align*}
\frac{L_\alpha}{\mu_\alpha} = \frac{\max_i L_i}{\sum_{i=1}^{m} \mu_i} + \frac{\lambda_{\max}(W)}{\lambda_{\min}(W)} = \frac{\max_i L_i}{\sum_{i=1}^{m} \mu_i} + \chi(W).
\end{align*}

Unfortunately, the structure of the function $F_\alpha$ does not allow a decentralized computation of a solution for the inner problem~\eqref{eq:xstary}, i.e., each agent can compute the solution $x^*_i$ using local information only. Nevertheless, the additional term in~\eqref{better_cond} has a gradient with a network structure and can be computed in a distributed manner using information shared from the neighbors of each agent. Particularly, \cub{consider} the auxiliary dual problem
\begin{align}\label{eq:auxiliary_augmented}
\varphi_{\alpha}(y) & = \max_x \Psi_\alpha(x,y) \quad \text{where} \quad \Psi_\alpha(x,y) =\langle x, \sqrt{W}^Ty \rangle - F(x) - \frac{\alpha}{2}\left\langle x,Wx\right\rangle.
\end{align}
Then, we have that $
\nabla_x \Psi_\alpha(x,y) = \sqrt{W}^Ty - \nabla F(x) - \alpha Wx$. \ga{$\sqrt{W}^T = \sqrt{W}$...}

In this case, we can use the \textsc{FGM} to obtain an approximate solution to the inner problem using Nesterov fast gradient method. In Algorithm~\ref{alg:improve1}, we propose a modification of  Algorithm~\ref{alg:no_friend:case1} to take into account the new structure for the solution of the inner auxiliary problem. 
%

\begin{algorithm}[t]
	\caption{Augmented Distributed \textsc{FGM} for strongly convex and smooth problems}
	\begin{algorithmic}[1]
		\STATE{All agents set $z_0^i = \tilde{z}_0^i =0 \in \mathbb{R}^n$, $\mu_\alpha = \sum_{i=1}^{m}\mu_i$, $\alpha = \mu_\alpha / \lambda_{\min}(W)$, $L_\alpha = L + \alpha \lambda_{\max}(W)$,  $\tilde q = \frac{\mu_\alpha}{L_\alpha}$, $q =\tilde q \frac{\lambda^{+}_{\min}(W)}{\lambda_{\max}(W)}$, $\alpha_0$ solves $\frac{\alpha_0^2 -q}{1-\alpha_0} = 1$, $\tilde \alpha_0$ solves $\frac{\tilde\alpha_0^2 -\tilde q}{1-\tilde\alpha_0} = 1$, $T$ and $N$.}
		\STATE{For each agent $i$}
		\FOR{ $k=0,1,2,\cdots,N-1$ }
		\STATE{ $w^i_0 = \tilde{w}^i_0 = 0 \in \mathbb{R}^n$}
		\FOR{ $t=0,1,2,\cdots,T-1$}
				\STATE{Share $\tilde w^i_t$ with neighbors, i.e. $\{j \mid (i,j) \in E \}$.}
		\STATE{$w^i_{t+1} = \tilde{w}^i_t + \frac{1}{L_\alpha} (\tilde z^i_k -\nabla f_i (\tilde{w}^i_t) - \alpha \sum_{j=1}^{m} W_{ij} \tilde{w}^j_t ) $}
		\STATE{Compute $\tilde \alpha_{t+1}\in (0,1)$ from $\tilde \alpha_{t+1}^2 = (1 -  \tilde\alpha_{t+1}) \tilde \alpha_{t}^2 + \tilde q \tilde \alpha_{t+1}$ and set $\tilde \beta_t = \frac{\tilde \alpha_t(1-\tilde\alpha_t)}{\tilde \alpha_{t}^2 + \tilde  \alpha_{t+1}}$}
		\STATE{{ $\tilde w_{t+1}^i  = w_{t+1}^i + \tilde \beta_t(w_{t+1}^i - w_t^i)$}}
		\ENDFOR
		\STATE{Share $w^i_T$ with neighbors, i.e. $\{j \mid (i,j) \in E \}$.}
		\STATE{{ $z_{k+1}^i = \tilde z_k^i - \frac{\mu_\alpha}{\lambda_{\max}(W)} \sum_{j=1}^{m} W_{ij} w^j_T$}}
		\STATE{Compute $\alpha_{k+1}\in (0,1)$ from $\alpha_{k+1}^2 = (1 - \alpha_{k+1})\alpha_{k}^2 +q \alpha_{k+1}$ and set $\beta_k = \frac{\alpha_k(1-\alpha_k)}{\alpha_{k}^2 + \alpha_{k+1}}$}
		\STATE{{ $\tilde z_{k+1}^i  = z_{k+1}^i + \beta_k(z_{k+1}^i - z_k^i)$}}
		\ENDFOR
	\end{algorithmic}
\label{alg:improve1}
\end{algorithm}

\begin{corollary}\label{cor:improve_bound}
	Let $F(x)$ be defined in~\eqref{equiv_main_problem}, and assume $f_i$ is $L_i$-smooth for $1\leq i \leq m$ and $\bar \mu = \sum_{i=1}^{m}\mu_i >0$. For any $\varepsilon >0$, the output $x^*(z_N)$ of Algorithm~\ref{alg:improve1} is an $(\varepsilon,\varepsilon/R )$-solution of~\eqref{consensus_problem2} for 
	\begin{align*}
	N & \geq 8\sqrt{\left( \frac{L}{\bar \mu} + \chi(W)\right)\chi(W)}\log \left(\frac{2\sqrt{2}\lambda_{\max}(W) R^2}{\bar \mu\cdot\varepsilon}\right),
	\end{align*}
	and
	\begin{align*}
	T & \geq \sqrt{\frac{L}{\bar \mu} + \chi(W)} \log \left(\frac{6L_\alpha R^2 R_w^2}{\varepsilon^2}\sqrt{\left( \frac{L}{\bar \mu} + \chi(W)\right) \chi(W)}\right),
	\end{align*}
	where $R=\|y^*\|_2$, $R_x=\|x^*-x^*(0)\|_2 $, $R_w=R_x+\|x^*\|_2$, and $\chi(W)=\lambda_{\max}{(W)}/ \lambda_{\min}^{+}{(W)}$. 
\end{corollary}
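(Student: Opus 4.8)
The key observation is that Algorithm~\ref{alg:improve1} is nothing but Algorithm~\ref{alg:no_friend:case1} applied to the reformulated problem $\min_{\sqrt{W}x=0}F_\alpha(x)$ of~\eqref{better_cond}, so the whole argument reduces to invoking Theorem~\ref{thm:no_friend:case1} with $(\mu,L)$ replaced by $(\mu_\alpha,L_\alpha)$ and then substituting the chosen value of $\alpha$. First I would record the elementary facts already noted after~\eqref{better_cond}: on the feasible set $\{x:\sqrt{W}x=0\}$ one has $Wx=0$, hence $\langle x,Wx\rangle=0$ and $F_\alpha(x)=F(x)$, so the constrained minimizers coincide and any $(\varepsilon,\varepsilon/R)$-solution of $\min_{\sqrt{W}x=0}F_\alpha(x)$ is an $(\varepsilon,\varepsilon/R)$-solution of~\eqref{consensus_problem2}. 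Moreover $F_\alpha$ is $\mu_\alpha$-strongly convex and $L_\alpha$-smooth with $\mu_\alpha=\min\{\bar\mu,\alpha\lambda_{\min}^+(W)\}$ and $L_\alpha=L+\alpha\lambda_{\max}(W)$; the choice $\alpha=\bar\mu/\lambda_{\min}^+(W)$ gives $\mu_\alpha=\bar\mu$ and
\[
\frac{L_\alpha}{\mu_\alpha}=\frac{L}{\bar\mu}+\frac{\alpha\lambda_{\max}(W)}{\bar\mu}=\frac{L}{\bar\mu}+\chi(W).
\]

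Second, I would justify that the inner $t$-loop of Algorithm~\ref{alg:improve1} is a legitimate distributed implementation of \textsc{FGM} on $-\Psi_\alpha(\cdot,\tilde z_k)$. The subproblem is no longer separable because of the term $\frac{\alpha}{2}\langle x,Wx\rangle$, but $\nabla_x\Psi_\alpha(x,y)=\sqrt{W}^Ty-\nabla F(x)-\alpha Wx$ has a graph-compliant sparsity pattern, so one gradient step can be carried out with a single round of neighbor communication (the extra ``Share $\tilde w^i_t$'' step) — this is exactly the computation-communication trade-off announced in the section title. Since $-\Psi_\alpha(\cdot,y)$ is $\mu_\alpha$-strongly convex and $L_\alpha$-smooth in $x$, Theorem~\ref{thm:nesterov} yields, after $T\geq\sqrt{L_\alpha/\mu_\alpha}\,\log(L_\alpha R_w^2/\xi)$ inner iterations, a point $w_T$ with $\Psi_\alpha(y,x^*_\alpha(A^Ty))-\Psi_\alpha(y,w_T(A^Ty))\leq\xi$, where at the start of outer iteration $k$ we have $w_0=0$ and $w^*=x^*_\alpha(\tilde z_k)$, so $R_w=\|x^*_\alpha(\tilde z_k)\|_2\leq R_x+\|x^*\|_2$ exactly as in the proof of Theorem~\ref{thm:no_friend:case1}.

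Third, I would run the outer argument verbatim as in Theorem~\ref{thm:no_friend:case1}. By Theorem~\ref{thm:endowed} the approximate inner solution endows $\varphi_\alpha$ with a $(3\xi,2L_{\varphi_\alpha},\frac{1}{2}\mu_{\varphi_\alpha})$-oracle, where $\mu_{\varphi_\alpha}=\lambda_{\min}^+(W)/L_\alpha$ and $L_{\varphi_\alpha}=\lambda_{\max}(W)/\mu_\alpha$; plugging this into Theorem~\ref{thm:devoler}, bounding $\|\sqrt{W}x^*_\alpha(\cdot)\|_2$ and $|\langle y_k,\sqrt{W}x^*_\alpha(\cdot)\rangle|$ via $\|y_k\|_2\leq 2R$ and invoking Lemma~\ref{lemma:sasha}, an $(\varepsilon,\varepsilon/R)$-solution is guaranteed once
\[
N\geq 8\sqrt{\frac{L_{\varphi_\alpha}}{\mu_{\varphi_\alpha}}}\log\!\left(\frac{2\sqrt{2}\,L_{\varphi_\alpha}R^2}{\varepsilon}\right)\quad\text{and}\quad \xi\leq\frac{\varepsilon^2}{6R^2}\sqrt{\frac{\mu_{\varphi_\alpha}}{L_{\varphi_\alpha}}}.
\]
Using $L_{\varphi_\alpha}/\mu_{\varphi_\alpha}=(L_\alpha/\mu_\alpha)\chi(W)=(L/\bar\mu+\chi(W))\chi(W)$, $L_{\varphi_\alpha}=\lambda_{\max}(W)/\bar\mu$, $\sqrt{L_\alpha/\mu_\alpha}=\sqrt{L/\bar\mu+\chi(W)}$, and substituting the resulting bound on $\xi$ back into the inner requirement on $T$ produces precisely the stated expressions for $N$ and $T$.

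The part I expect to require the most care is the bookkeeping in the second and third steps: one must check that the inexact-oracle machinery of Theorems~\ref{thm:devoler}--\ref{thm:endowed} is unaffected by the non-separable quadratic penalty (it is, since those results only use strong convexity and smoothness of the inner objective, separability having been relevant only for the distributed implementation), that the ``strong convexity'' of $F_\alpha$ with constant $\bar\mu$ is the one actually seen by the dual analysis along the subspace relevant to $\sqrt{W}x=0$, and that $R$, $R_x$, $R_w$ are interpreted consistently for the augmented problem so that the substitutions line up cleanly. Everything else is a direct specialization of the already-established Theorem~\ref{thm:no_friend:case1}.
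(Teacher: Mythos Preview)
Your proposal is correct and follows exactly the route the paper intends: the corollary is stated without an explicit proof, but the surrounding text (the definition of $F_\alpha$, the computation of $\mu_\alpha$, $L_\alpha$ and $L_\alpha/\mu_\alpha$ with $\alpha=\bar\mu/\lambda_{\min}^+(W)$, the observation that $\nabla_x\Psi_\alpha$ is graph-compliant, and the sentence ``In Algorithm~\ref{alg:improve1}, we propose a modification of Algorithm~\ref{alg:no_friend:case1}'') makes clear that the result is meant to be obtained by invoking Theorem~\ref{thm:no_friend:case1} with $(\mu,L)$ replaced by $(\mu_\alpha,L_\alpha)=(\bar\mu,L+\bar\mu\,\chi(W))$, which is precisely what you do. Your flagged caveat about the strong-convexity constant $\bar\mu$ being the one ``seen'' by the dual analysis is well placed and is indeed the only nontrivial bookkeeping step.
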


Corollary~\ref{cor:improve_bound} implies that at each of the outer iterations, required to obtain an approximate solution to the inner maximization problem, the number of oracle calls for $f$ and communication rounds between agents can be bounded by 
\begin{align*}
	\tilde O \left(  \sqrt{\frac{L_\alpha}{\mu_\alpha}} \right) = \tilde O \left(  \sqrt{\frac{L}{\bar \mu} + \chi(W)} \right).
\end{align*} 
Moreover, the number of outer communication rounds can be bounded by
\begin{align*}
\tilde{O}\left( \sqrt{\frac{L_\alpha}{\mu_\alpha} \chi(W)}\right) = \tilde O \left(  \sqrt{\left( \frac{L}{\bar \mu} + \chi(W)\right) \chi(W)} \right).
\end{align*}

The total number of communications rounds and local oracle calls, taking into account the inner and outer loops is $\tilde{O}\big(\left(\cub{{L}/{\bar \mu}} + \chi(W)\right) \sqrt{\chi(W)} \big)$.

This estimate shows that we can replace the smallest strong convexity constant for the sum among all of them, but we have to pay an additive price proportional to the condition number of the graph and additional communication rounds in the inner maximization problem proportional to the number of oracle calls for $f$. 

This result can be extended to the case when $F(x)$ is just smooth by using the regularization technique with $\mu_i = \varepsilon / (R^2_x)$. Particularly, consider the regularized function
\begin{align}\label{better_cond2}
\hat F_\alpha (x) & = F(x) +  \frac{\varepsilon}{R_x^2}\|x - x^*(0)\|_2^2 + \frac{\alpha}{2}\left\langle x,Wx\right\rangle \nonumber \\ 
&= \sum\limits_{i=1}^{m}f_i(x_i) +  \frac{\varepsilon}{R_x^2}\|x - x^*(0)\|_2^2 + \frac{\alpha}{2}\left\langle x,Wx\right\rangle.
\end{align}

The function $\hat F_\alpha$ is \mbox{$\hat \mu_\alpha$-strongly} convex with $\hat \mu_\alpha = \min \left\lbrace m\frac{\varepsilon}{R^2_x},\alpha \lambda_{\min}(W)\right\rbrace $ and has \mbox{$\hat L_\alpha $-Lipschitz} continuous gradients with $\hat L_\alpha = L + \alpha \lambda_{\max}(W) +  m{\varepsilon}/{R^2_x}$. Choose $\alpha = {m({\varepsilon}/{R^2_x} )}/{\lambda_{\min}(W)}$. Under this specific choice of $\alpha$, the function $F_\alpha$ will have a condition number
\begin{align*}
\frac{\hat L_\alpha}{\hat \mu_\alpha} = \frac{L}{ m\frac{\varepsilon}{R^2_x}} + \frac{\lambda_{\max}(W)}{\lambda_{\min}(W)} +1 = \frac{{R^2_x} L}{ m{\varepsilon}} + \chi(W) +1.
\end{align*}

The next Corollary shows the complexity of the proposed distributed augmented algorithm for the solution of sums of smooth convex functions.


\begin{algorithm}[t]
	\begin{algorithmic}[1]
		\STATE{All agents set $z_0^i = \tilde{z}_0^i =0 \in \mathbb{R}^n$, $\hat \mu_\alpha = m\frac{\varepsilon}{R^2_x}$, $\alpha = \hat \mu_\alpha / \lambda_{\min}(W)$, $\hat L_\alpha = L + \alpha \lambda_{\max}(W) +  m\frac{\varepsilon}{R^2_x}$,  $\tilde q = \frac{\hat \mu_\alpha}{\hat L_\alpha}$, $q =\tilde q \frac{\lambda^{+}_{\min}(W)}{\lambda_{\max}(W)}$, $\alpha_0$ solves $\frac{\alpha_0^2 -q}{1-\alpha_0} = 1$, $\tilde \alpha_0$ solves $\frac{\tilde\alpha_0^2 -\tilde q}{1-\tilde\alpha_0} = 1$, $T$ and $N$.}
		\STATE{For each agent $i$}
		\FOR{ $k=0,1,2,\cdots,N-1$ }
		\STATE{ $w^i_0 = \tilde{w}^i_0 = 0 \in \mathbb{R}^n$}
		\FOR{ $t=0,1,2,\cdots,T-1$}
		\STATE{Share $\tilde w^i_t$ with neighbors, i.e. $\{j \mid (i,j) \in E \}$.}
		\STATE{$w^i_{t+1} = \tilde{w}^i_t + \frac{1}{L_\alpha} (\tilde z^i_k -\nabla f_i (\tilde{w}^i_t) - \alpha \sum_{j=1}^{m} W_{ij} \tilde{w}^j_t - \frac{\varepsilon}{R^2_x}\left( w_i - x^*_i(0)\right) ) $}
		\STATE{Compute $\tilde \alpha_{t+1}\in (0,1)$ from $\tilde \alpha_{t+1}^2 = (1 -  \tilde\alpha_{t+1}) \tilde \alpha_{t}^2 + \tilde q \tilde \alpha_{t+1}$ and set $\tilde \beta_t = \frac{\tilde \alpha_t(1-\tilde\alpha_t)}{\tilde \alpha_{t}^2 + \tilde  \alpha_{t+1}}$}
		\STATE{{ $\tilde w_{t+1}^i  = w_{t+1}^i + \tilde \beta_t(w_{t+1}^i - w_t^i)$}}
		\ENDFOR
		\STATE{Share $w^i_T$ with neighbors, i.e. $\{j \mid (i,j) \in E \}$.}
		\STATE{{ $z_{k+1}^i = \tilde z_k^i - \frac{\hat \mu_\alpha}{\lambda_{\max}(W)} \sum_{j=1}^{m} W_{ij} w^j_T$}}
		\STATE{Compute $\alpha_{k+1}\in (0,1)$ from $\alpha_{k+1}^2 = (1 - \alpha_{k+1})\alpha_{k}^2 +q \alpha_{k+1}$ and set $\beta_k = \frac{\alpha_k(1-\alpha_k)}{\alpha_{k}^2 + \alpha_{k+1}}$}
		\STATE{{ $\tilde z_{k+1}^i  = z_{k+1}^i + \beta_k(z_{k+1}^i - z_k^i)$}}
		\ENDFOR
	\end{algorithmic}
	\caption{Augmented Distributed \textsc{FGM} for smooth problems}
	\label{alg:improve2}
\end{algorithm}

\begin{corollary}\label{cor:improve_bound2}
	Let $F(x)$ be a function such that \cub{Assumption~\ref{assumptions}(c)} hold. Then, for any $\varepsilon >0$, the output $x^*(z_N)$ of Algorithm~\ref{alg:improve1} is an $(\varepsilon,\varepsilon/R )$-solution of~\eqref{consensus_problem2} for 
	\begin{align*}
	N & \geq 8\sqrt{\left( \frac{2R^2_x L}{ m{\varepsilon}} + \chi(W) +1\right) \chi(W)}\log C_1,
	\end{align*}
	and
	\begin{align*}
	T & \geq \sqrt{\frac{2R^2_x L}{ m{\varepsilon}} + \chi(W) +1} \log C_2,
	\end{align*}
	where
	\begin{align*}
	C_1 & = \frac{8\sqrt{2}\lambda_{\max}(W) R^2_x R^2}{m\cdot\varepsilon^2}\\
	C_2 & = \frac{24 (L+ \alpha \lambda_{\max}(W) +  m\frac{\varepsilon}{R^2_x}) R^2 R_w^2}{\varepsilon^2}\sqrt{\left( \frac{2R^2_x L}{ m{\varepsilon}} + \chi(W) +1\right) \chi(W)},
	\end{align*}
	$R = \|y^*\|_2$, $R_x = \|x^*-x^*(0)\|_2 $, $R_w = R_x + \|x^*\|_2$, and $\chi(W)=~ {\lambda_{\max}{(W)}}/{\lambda_{\min}^{+}{(W)}}$. 
\end{corollary}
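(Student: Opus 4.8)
The plan is to deduce Corollary~\ref{cor:improve_bound2} from Corollary~\ref{cor:improve_bound} via the same primal regularization that takes Theorem~\ref{thm:no_friend:case1} to Theorem~\ref{thm:no_friend:case2}. First I would introduce the regularized separable objective
\begin{align*}
\hat F(x) = F(x) + \frac{\varepsilon}{2R_x^2}\|x - x^*(0)\|_2^2 = \sum_{i=1}^{m}\left( f_i(x_i) + \frac{\varepsilon}{2R_x^2}\|x_i - x^*_i(0)\|_2^2 \right),
\end{align*}
so that, under Assumption~\ref{assumptions}(c), each summand is strongly convex and $\hat F$ is $\bar\mu$-strongly convex with $\bar\mu = m\varepsilon/R_x^2 > 0$ and smooth with the constant recorded in Algorithm~\ref{alg:improve2}; in particular the hypothesis $\bar\mu>0$ of Corollary~\ref{cor:improve_bound} is now automatic even though $F$ itself is only convex. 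The key observation is that Algorithm~\ref{alg:improve2} is precisely Algorithm~\ref{alg:improve1} run on $\hat F$ in place of $F$: the extra term $-\frac{\varepsilon}{R_x^2}(w^i_t - x^*_i(0))$ in the inner update (line~7) is exactly the gradient of the added regularizer, and $\hat\mu_\alpha$, $\hat L_\alpha$, $\alpha = \hat\mu_\alpha/\lambda_{\min}(W)$ are the augmented-FGM parameters $\mu_\alpha$, $L_\alpha$, $\alpha$ of Corollary~\ref{cor:improve_bound} instantiated at $\hat F$.

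Second, I would carry out the accuracy bookkeeping exactly as in Theorem~\ref{thm:case3} and Theorem~\ref{thm:no_friend:case2}. Since $\|x^* - x^*(0)\|_2 = R_x$, evaluating the regularizer at the unregularized constrained optimum gives $\hat F(x^*) = F^* + \varepsilon/2$, hence $\hat F^* \le F^* + \varepsilon/2$; then any $\hat x$ with $\hat F(\hat x) - \hat F^* \le \varepsilon/2$ satisfies $F(\hat x) \le \hat F(\hat x) \le F^* + \varepsilon$, while the feasibility constraint $\sqrt{W}x = 0$ is common to both problems, so $\|\sqrt{W}\hat x\|_2 \le \varepsilon/(2R) \le \varepsilon/R$. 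It therefore suffices to produce an $(\varepsilon/2,\varepsilon/(2R))$-solution of the regularized problem~\eqref{better_cond2}, which is what Corollary~\ref{cor:improve_bound} delivers once applied with $F\mapsto\hat F$, $\bar\mu\mapsto m\varepsilon/R_x^2$, and target accuracy $\varepsilon/2$. A direct substitution of these parameters into the $N$- and $T$-estimates of Corollary~\ref{cor:improve_bound}, together with $\hat L_\alpha = L + \alpha\lambda_{\max}(W) + m\varepsilon/R_x^2$, $R = \|y^*\|_2$ and $R_w = R_x + \|x^*\|_2$, yields the claimed bounds with the displayed constants $C_1$ and $C_2$ after collecting terms.

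The step that requires the most care — and the main obstacle — is the meaning of $R$ in the resulting estimates. Corollary~\ref{cor:improve_bound} naturally expresses its bounds through the smallest-norm dual solution of the \emph{regularized-and-augmented} problem, which a priori differs from $y^*$ because the primal regularizer moves the constrained minimizer off $x^*$. This is handled as in Theorem~\ref{thm:no_friend:case2}: the augmentation term $\frac{\alpha}{2}\langle x, Wx\rangle$ vanishes on the consensus subspace and so does not enter the first-order conditions that determine the dual variable, while the primal regularizer is controlled by $R_x$, so the relevant dual radius can be kept at (or upper-bounded by) $R$. Apart from this and the routine check that the inner FGM loop of Algorithm~\ref{alg:improve2} still furnishes a $(\delta,\hat L_\alpha,\hat\mu_\alpha)$-oracle for the dual in the sense of Theorem~\ref{thm:endowed} — immediate, since the inner maximand $\langle\sqrt{W}^T y,x\rangle - \hat F(x) - \frac{\alpha}{2}\langle x,Wx\rangle$ is strongly concave and smooth with precisely those parameters — the proof is a mechanical substitution and introduces no new analysis.
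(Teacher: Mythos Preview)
Your proposal is correct and matches the paper's approach: the discussion preceding Corollary~\ref{cor:improve_bound2} introduces exactly this primal regularization (the function $\hat F_\alpha$ in~\eqref{better_cond2}), and the corollary is obtained by substituting $\bar\mu = m\varepsilon/R_x^2$ into Corollary~\ref{cor:improve_bound}, just as you do. One small slip worth fixing: $\hat F$ itself is only $\varepsilon/R_x^2$-strongly convex (the minimum over the separable blocks), not $m\varepsilon/R_x^2$-strongly convex; the quantity $m\varepsilon/R_x^2$ is the \emph{sum} $\bar\mu=\sum_i\mu_i$ that Corollary~\ref{cor:improve_bound} uses, which is indeed what your substitution requires.
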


The number of inner communication rounds and local oracle calls required by Algorithm~\ref{alg:improve2} to obtain an $(\varepsilon,\varepsilon/R)$-solution of~\eqref{consensus_problem2} can be bounded by $\tilde{O}\big(\sqrt{{LR_x^2}/{(m\varepsilon)} + \chi(W)}\big)$. On the other hand the number of outer communication rounds can be bounded by \cub{$\tilde{O}\big(\sqrt{\left({LR_x^2}/{(m\varepsilon)}  + \chi(W)\right)\chi(W)}\big)$}. Therefore, this approach is useful for large but well-connected networks, where $m \gg 1$ and $\chi(W) = O(1)$ or $\chi(W) = O(\log(m))$.

A similar method to improve the definition of the global strong convexity parameter was proposed in~\cite{sca17}. In~\cite{sca17}, the authors propose to introduce the proxy function $f_i(x)- (\mu_i - ({1}/{m})\sum_{i=1}^{m}\mu_i)\|x\|_2^2$. With this new function, the condition number of $F$ improves to
\begin{align*}
\frac{\max_i L_i -\mu_i}{\frac{1}{m}\sum_{i=1}^{m}\mu_i} -1.
\end{align*}
	
	\section{Discussion}\label{sec:discussion}
	
	Table \ref{tab:summary} presents a summary of the results presented in Section~\ref{sec:main}. In particular, it shows the number of communication rounds required to obtain an $(\varepsilon,\varepsilon/R)$-solution for each function class in Assumption~\ref{assumptions}. 

\begin{table}
	\centering
	\tbl{A summary of algorithmic performance.}
		{\begin{tabular}{cc} \toprule
			\bf Property of $F(x)$ & \bf Iterations Required  \\ \toprule
			$\mu$-strongly convex and $L$-smooth & ${O}\left( \sqrt{\frac{L}{\mu} \chi(W)}  \cu{\log\left(\frac{\lambda_{\max}(W)R^2}{\mu \varepsilon}\right)}\right)$          \\
			$\mu$-strongly convex and $M$-Lipschitz&${O}\left( \sqrt{\frac{M^2}{\mu \varepsilon} \chi(W)}\cu{\log\left(\frac{\chi(W)M^2}{\mu \varepsilon}\right)}\right) $ \\
			$L$-smooth &   ${O}\left( \sqrt{\frac{L R^2_x}{\varepsilon} \chi(W)}\cu{\log \left(\frac{\sqrt{\lambda_{\max}(W)}RR_x}{\varepsilon}\right)}  \right) $ \\
			$M$-Lipschitz& ${O}\left(  \sqrt{\frac{M^2R_x^2}{\varepsilon^2} \chi(W)} \cu{\log\left(\frac{\sqrt{\chi(W)}MR_x}{\varepsilon}\right)}\right) $\\ \botrule
		\end{tabular}}
	\label{tab:summary}    
\end{table}

The estimates in Table \ref{tab:summary} are optimal up to logarithmic factors. In the smooth cases, where $L < \infty$, these estimates follow from classical centralized complexity estimation of the \textsc{FGM} algorithm. In the distributed setting, one has to perform $O(\sqrt{\chi(W)} \log (1/\varepsilon))$ additional consensus steps at each iteration. This corresponds to the number of iterations needed to solve the consensus problem 
\begin{align}\label{prob:consensus}
\min_{{x}} \frac{1}{2} \left\langle {x,Wx} \right\rangle,
\end{align}
where $W$ is a communication matrix as defined in Section~\ref{sec:problem}. \textsc{FGM} provides a direct estimate on the number of iterations required to reach consensus, given that \eqref{prob:consensus} is $\sigma_{\min}(\sqrt{W})$-strongly convex in $x_0 +\ker(W)$ and has $\sigma_{\max}(\sqrt{W})$-Lipschitz continuous gradients, and this estimate cannot be improved up to constant factors. 

The specific value of $\chi(W)$, and its dependency on the number of nodes $m$ has been extensively studied in the literature of distributed optimization \cite{ned15}. In \cite{Nedic2017}, Proposition $5$ provides an extensive list of \textit{worst-case} dependencies of the spectral gap for large classes of graphs. Particularly, for fixed undirected graphs, in the worst case we have $\chi(W) = O(m^2)$ \cite{ols14}. This matches the best upper bound found in the literature of consensus and distributed optimization \cite{ore10,liu13}. Thus, the consensus set described by the constraint ~\mbox{$\sqrt{W}x=0$} should be preferred over the description as $Wx=0$, even though both representations correctly describe the consensus subspace $x_1=\hdots=x_m$. Particularly, when we pick $A = \sqrt{W}$, we have \mbox{$\chi(A^TA) = \chi(W)$} instead of \mbox{$\chi(W^TW) = \chi(W^2) \gg \chi(W)$}. 

The cases when $F(x)$ is convex or strongly convex can be generalized to $p$-norms, with $p\geq 1$, see \cite{ani17}. The definitions of the condition number $\chi$ needs to be defined accordingly. Let's introduce a norm \mbox{$\| x\|^2_p = \|x_1\|_p^2 + ... + \|x_m\|_p^2$} for \mbox{$p\geq 1$} and assume that $F(x)$ is $\mu$-strongly convex and $L$-Lipschitz continuous gradient in this (new) norm $\|\cdot \|_p$ (in $\mathbb{R}^{mn}$), see \cite{nes15}~(Lemma 1), \cite{dvu17b}~(Lemma 1) and \cite{nes05}~(Theorem 1). Thus, 
\begin{align*}
\chi(W) = {\max_{\|h\|=1} \frac{\langle h,Wh\rangle}{\mu}} \bigg /{\min_{\|h\|=1, h \perp \text{ker}(W)} \frac{\langle h,Wh \rangle}{L}}.
\end{align*} 

Note that we typically do not know $R$ or $R_x$. Thus, we require a method to estimate the strong convexity parameter, which is challenging \cite{nes07,odo15}. Some recent work have explored restarting techniques to reach optimal convergence rates when the strong convexity parameters are unknown \cite{odo15,iou14}. Similarly, a generalization of the \textsc{FGM} algorithm can be proposed when the smoothness parameter is unknown \cite{gas16b}. However, the effect of restarting in the distributed setup requires further study and is out of the scope of this paper. 

\cu{Additionally, parameters such as  $\lambda_{\min}^{+}(W)$, $\lambda_{\max}(W)$ can be efficiently computed in a distributed manner~\cite{TRAN20145526}. Moreover, the values of~$\mu$ and $L$ can be shared with simple max-consensus which is guaranteed to converge in finite time~\cite{maxcon}.}

	\section{Experimental results}\label{sec:experiments}
	
	In this section, we will provide experimental results that show the performance of the optimal distributed algorithm presented in Sections~\ref{sec:main}. We will consider two different graph topologies; the cycle graph and Erd\H{o}s-R\'enyi random graph, of various sizes. We choose the cycle graph ($\chi(W) = O(m^2)$) and the Erd\H{o}s-R\'enyi random graph ($\chi(W) = O(\log (m))$) to show the scalability properties of the algorithms. \cub{Generally, we do not have access to the spectral properties of the graphs. For the cases where the network has a simple structure, like a path graph, star graph, cycle graph, we use its explicit values since we have access to them. For the case of random graphs, we use its lower bounds which are generally found in the literature~\cite[Table 1]{Nedic2019}.}

Initially, consider the \textit{ridge regression }(strongly convex and smooth) problem
\begin{align}\label{eq:regression}
\min_{z \in \mathbb{R}^n} \frac{1}{2ml}\|b - Hz\|_2^2 + \frac{1}{2}c \|z\|_2^2,
\end{align}
to be solved distributedly over a network. Each entry of the data matrix $H \in \mathbb{R}^{ml\times n}$ is generated as an independent identically distributed random variable $H_{ij} \sim \mathcal{N}(0,1)$, the vector of associated values $b \in \mathbb{R}^{ml}$ is generated as a vector of random variables where $b = Hx^* + \epsilon$ for some predefined $x^* \in \mathbb{R}^{n}$ and $\epsilon \sim \mathcal{N}(0,0.1)$. The columns of the data matrix $H$ and the output vector $b$ are evenly distributed among the agents with a total of $l$ data points per agent. The regularization constant is set to $c = 0.1$. 

Figure \ref{fig:ex1_optimality} shows experimental results for the ridge regression problem for a cycle graph and an Erd\H{o}s-R\'enyi random graph. For each type of graph we show the distance to optimality as well as the distance to consensus for a fixed graph with $m=100$, $n=10$ and $l=100$. Additionally, the scalability of the algorithm is shown by plotting the required number of steps to reach an accuracy of $\epsilon = 1\cdot 10^{-10}$ versus the number of nodes in the graph. We compare the performance of the proposed algorithm with some of the state of the art methods for distributed optimization. \textsc{{Dist-Opt}} refers to  Algorithm~\ref{alg:case1}. \textsc{{NonAcc-Dist}} refers to the non-accelerated version of Algorithm~\ref{alg:case1}. \textsc{{FGM}} is the centralized \textsc{FGM}. \textsc{Acc-DNGD} refers to the algorithm proposed in \cite{Qu2017} with parameter \mbox{$\eta = 0.1$} and $\alpha = \sqrt{\mu \eta}$. \textsc{{EXTRA}} refers to the algorithm proposed in \cite{shi15} with parameter $\alpha = 1$. \textsc{{DIGing}} refers to the algorithm proposed in \cite{ned16w} with parameter $\alpha = 0.1$. Figure \ref{fig:ex1_optimality} shows linear convergence rate with faster performance than other algorithms and linear scalability with respect to the size of the cycle graphs. \cu{When available, we have used the suggested parameter selection for each of the algorithms compared.}

\begin{figure}[t]
	\centering
	\resizebox{\textwidth}{!}{
\includegraphics[width=0.21\textwidth] {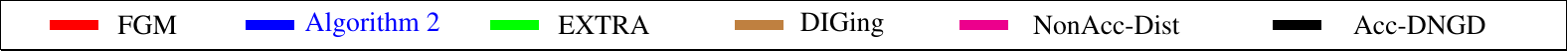}	
} \\
	\subfigure[Cycle Graph]{\resizebox{0.26\textwidth}{!}{
		\includegraphics[width=0.21\textwidth] {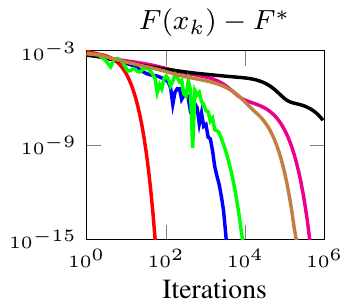}
	}
		\resizebox{0.24\textwidth}{!}{
	\includegraphics[width=0.21\textwidth] {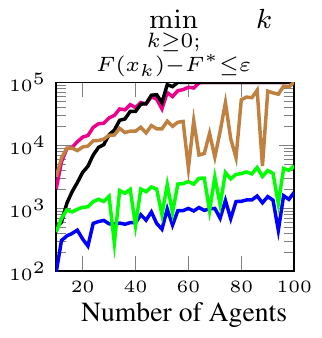}	
	}
	\resizebox{0.26\textwidth}{!}{
\includegraphics[width=0.21\textwidth] {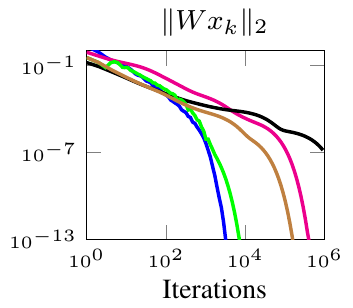}	
}
\resizebox{0.24\textwidth}{!}{
	\includegraphics[width=0.21\textwidth] {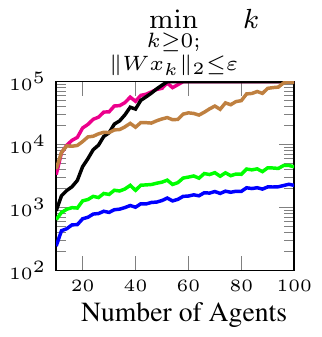}
}
}\\
	\subfigure[Erd\H{o}s-R\'enyi random graphs.]{
			\resizebox{0.26\textwidth}{!}{
				\includegraphics[width=0.21\textwidth] {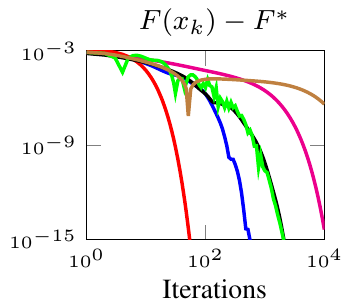}
			}
			\resizebox{0.24\textwidth}{!}{
				\includegraphics[width=0.21\textwidth] {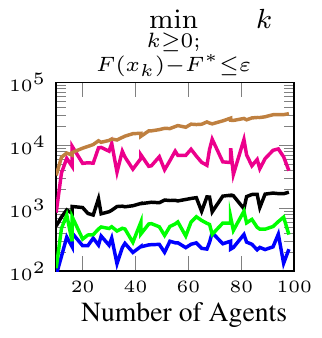}	
			}
			\resizebox{0.26\textwidth}{!}{
				\includegraphics[width=0.21\textwidth] {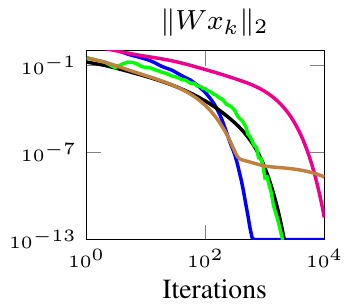}	
			}
			\resizebox{0.24\textwidth}{!}{
				\includegraphics[width=0.21\textwidth] {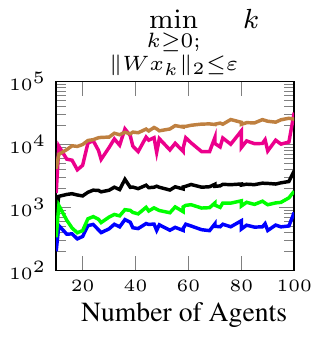}
			}
}
	\\
	\caption{Distance to optimality and consensus, and network scalability for a strongly convex and smooth problem. (a) Results for a cycle graph.  (b) Results for an Erd\H{o}s-R\'enyi random graph.}
	\label{fig:ex1_optimality}
\end{figure} 

As a second example, consider the Kullback-Leibler~(KL) barycenter computation problem (strongly convex and $M$-Lipschitz)
	\begin{align*}
	\min_{z \in S_n(1)} \sum\limits_{i=1}^{m}D_{KL}(z\|q_i) \triangleq \sum\limits_{i=1}^{m}\sum\limits_{j=1}^{n}z_i \log\left({z_i}/{[q_i]_j} \right),  
	\end{align*}
where $S_n(1) = \{ z \in \mathbb{R}^n : z_j \geq 0 ; j=1,2,\hdots,n; \sum_{j=1}^{n}z_j = 1 \}$ is a unit simplex in $\mathbb{R}^n$ and $q_i \in S_n(1)$ for all $i$. Each agent has a private probability distribution $q^i$ and seek to compute the a probability distribution that minimizes the average KL distance to the distributions $\{q_i\}_{i=1,\hdots,m}$. Figure \ref{fig:ex2_optimality} shows the results for the KL barycenter problem for a cycle graph with $m=100$, $n=10$ and various values of the regularization parameter when Algorithm~\ref{alg:case2} is used. We show the distance to optimality as well as the distance to consensus and the scalability of the algorithm.

\begin{figure*}[tbp!]
	\centering
			\resizebox{0.35\textwidth}{!}{
		\includegraphics[width=0.21\textwidth] {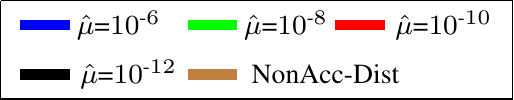}	
		} \hspace{2cm}
		\resizebox{0.175\textwidth}{!}{
	\includegraphics[width=0.21\textwidth] {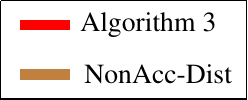}	
	} \\
\resizebox{0.51\textwidth}{!}{	\subfigure{
		\includegraphics[width=0.21\textwidth] {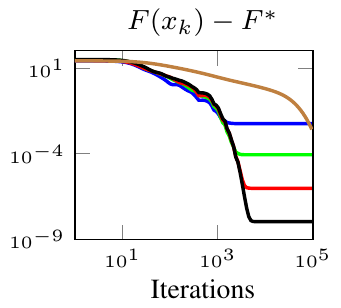}
		\includegraphics[width=0.21\textwidth] {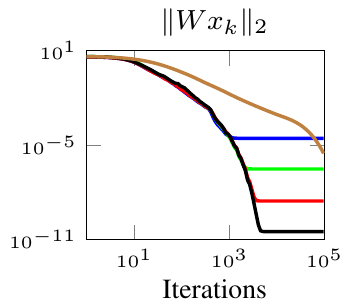}
	} }
	\resizebox{0.48\textwidth}{!}{\subfigure{
				\includegraphics[width=0.21\textwidth] {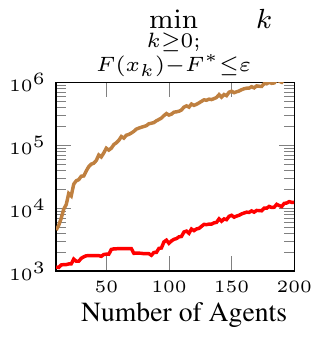}
		\includegraphics[width=0.21\textwidth] {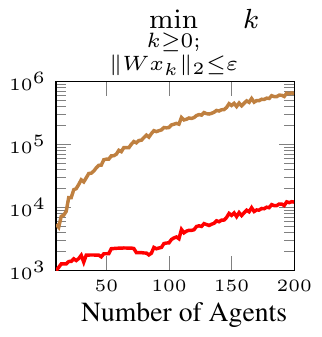}	
	} }
	\caption{Distance to optimality and consensus, and network scalability for a strongly convex and $M$-Lipschitz problem over a cycle graph with $m=100$, $n=10$ and various values of the regularization parameter $\hat \mu$ for Algorithm~\ref{alg:case2}. The brown line shows the performance for the non-accelerated distributed gradient descent of the dual problem.}
	\label{fig:ex2_optimality}
\end{figure*} 

In~\eqref{eq:regression}, if we assume $c=0$ and $H_i$ is a \textit{wide} matrix where $n \gg l$ (i.e., the dimension of the data points is much larger than the number of data points per agent), then the resulting problem is smooth but no longer strongly convex. Figure \ref{fig:ex3_optimality} shows the performance of Algorithm~\ref{alg:case3} over a cycle graph and an Erd\H{o}s-R\'enyi random graph, where $m=50$, $n=20$ and $l=10$, for different values of the regularization parameter. As expected, smaller values of the regularization parameter increase the precision of the algorithm but hinder its convergence rate. We compare the performance of Algorithm~\ref{alg:case3} with the distributed accelerated method proposed in~\cite{Qu2017} for non-strongly convex functions (Acc-DNGD-NSC) for a fixed regularization value $\hat \mu = 1\cdot 10^{-6}$. As presented in Table \ref{tab:summary_other}, the algorithms have similar convergence rates, as shown by the intersection of the curves around the accuracy point corresponding to the regularization parameter. Nevertheless, as seen in Figure \ref{fig:ex1_optimality}, Acc-DNDG-NSC has a worst scalability with respect to the number of nodes, which is particularly evident for the cycle graph.

\begin{figure}[tbp!]
	\centering
	\resizebox{\textwidth}{!}{
			\includegraphics[width=0.21\textwidth] {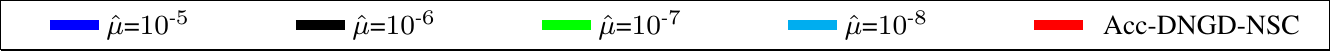}
	} \\
	\subfigure[Cycle graph]{  \resizebox{0.6\textwidth}{!}{
			\includegraphics[width=0.21\textwidth] {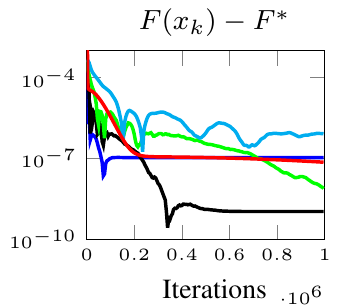}
\includegraphics[width=0.21\textwidth] {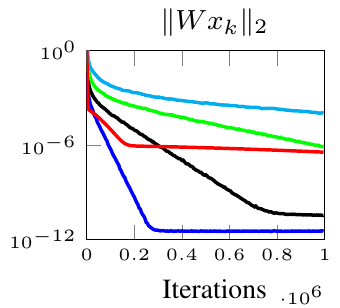}
}
}
	\subfigure[Erd\H{o}s-R\'enyi random graph]{  \resizebox{0.6\textwidth}{!}{ 
	\includegraphics[width=0.21\textwidth] {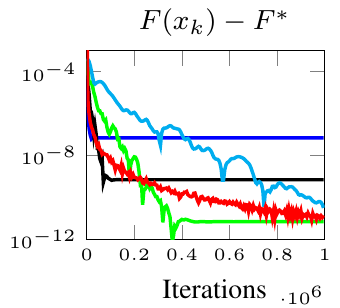}	
\includegraphics[width=0.21\textwidth] {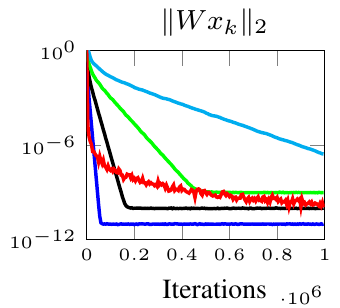}
}
}
	\caption{Distance to optimality and consensus for a smooth problem over a Erd\H{o}s-R\'enyi random graph with $m=100$, $n=50$, $l=10$ and various values of the regularization parameter $\varepsilon$ for Algorithm~\ref{alg:case3}.}
	\label{fig:ex3_optimality}
\end{figure}

	Next, we present a number of numerical experiments and compare the performance of the proposed methods. Consider the logistic regression problem for training linear classifiers. We seek to solve the following optimization problem:
\begin{align}\label{fun:logistic}
\min_{x\in \mathbb{R}^n} \frac{1}{2ml} \sum_{i=1}^{ml} \log \left(1 + \exp\left(-y_i \cdot A_i^Tx\right)\right) + \frac{1}{2}c \|x\|_2^2,
\end{align}
where $A_i \in \mathbb{R}^d$ is a data point with $y_i \in \{-1,1\}$ as its corresponding class assignment. We assume there is a total of $ml$ data points distributed evenly among $m$ agents, where each agent holds $l$ data points. For our experiments, initially we generate a random vector $x_{\text{true}} \in \mathbb{R}^n$ where each entry is chosen uniformly at random on~$[-1,1]$, we fixed $c=0.1$, the data points $A_i$ are generated uniformly at random on~$[-1,1]^n$, and \cub{each} label is computed as $y_i = \text{sign}( A_i^T x_{\text{true}})$. Note that each of the agents in the network will have a local function
\begin{align}\label{logic_local}
f_i(x) & = \frac{1}{2ml} \sum_{j=1}^{l} \log \left(1 + \exp\left(-[y^i]_j \cdot [A^i]_j^Tx\right)\right) + \frac{1}{2m}c \|x\|_2^2,
\end{align} 
where $A^j \in \mathbb{R}^{l\times n }$ and $y^j \in \{-1,1\}^l$ are the data points held by agent $j$ and their corresponding class assignments. Moreover, \eqref{logic_local} is not dual friendly. Therefore, we will use Algorithm~\ref{alg:no_friend:case1} for our next set of experimental results.

Figure~\ref{fig:logistic} shows the distance to optimality and the distance to consensus of the output of Algorithm~\ref{alg:no_friend:case1} for the problem of logistic regression. We use cycle graphs and Erd\H{o}s-R\'enyi random graph for a problem with $10000$ data points of dimension $10$. For each class of graphs, we explore three different scenarios for the distribution of the data among agents. We present the results for networks of $10$, $100$, and $1000$ agents; where each agent holds $1000$, $100$ and $10$ data points respectively. We compare the results of the \textsc{Acc-DNGD} algorithm in~\cite{Qu2017} with parameter \mbox{$\eta = 0.1$} and $\alpha = \sqrt{\mu \eta}$, the \textsc{{EXTRA}} algorithm in~\cite{shi15} with parameter $\alpha = 1$, and the \textsc{{DIGing}} algorithm in~\cite{ned16w} with parameter $\alpha = 0.1$. Figure~\ref{fig:logistic} shows a faster geometric convergence rate of Algorithm~\ref{alg:no_friend:case1} with respect to \textsc{Acc-DNGD}, \textsc{{EXTRA}} and \textsc{{DIGing}}. Nonetheless, we point out that those algorithms could be subject to improved convergence rates if the particular parameters of each algorithm are carefully selected. In the presented results, we do not claim to have selected the optimal step sizes for the algorithms we are comparing our proposed method. For the cycle graph in Figure~\ref{fig:logistic}(a), as the size of the network increases and the number of points per agent decreases the convergence rates slows down. The  \textsc{{EXTRA}} algorithms seem to have a near-optimal scaling on its convergence rate with respect to the size of the network. The  \textsc{Acc-DNGD} and \textsc{{DIGing}} algorithms rapidly decrease their convergence rate with the size of the network. Due to the better condition number of the Erd\H{o}s-R\'enyi random graphs, the Figure~\ref{fig:logistic}(b) shows a better scaling with the size of the network for all the analyzes algorithms. For this class of networks, the \textsc{Acc-DNGD} algorithms outperforms \textsc{{EXTRA}} and \textsc{{DIGing}}.

\begin{figure}[tbp!]
	\centering
	\resizebox{0.7\textwidth}{!}{
		\includegraphics[width=\textwidth] {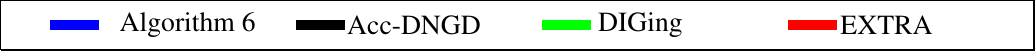}
	} \\
	\subfigure{
					\includegraphics[width=0.34\textwidth] {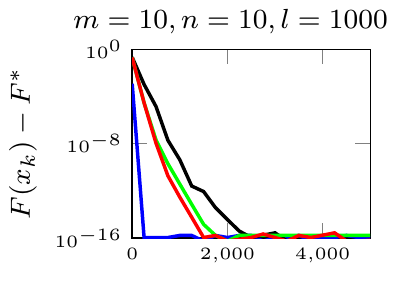}
					\includegraphics[width=0.30\textwidth] {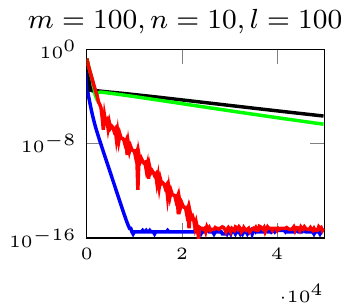}
					\includegraphics[width=0.30\textwidth] {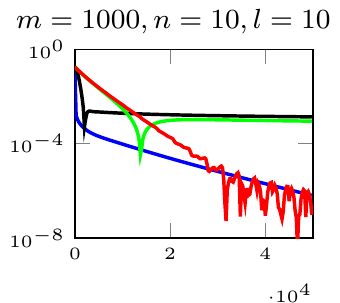}
	}\\
	\subfigure[Cycle Graph]{\addtocounter{subfigure}{-1}
		\includegraphics[width=0.34\textwidth] {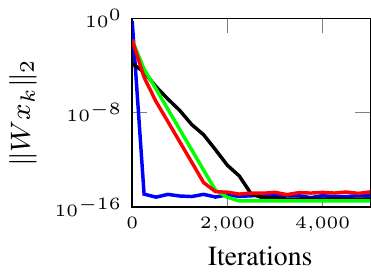}
		\includegraphics[width=0.30\textwidth] {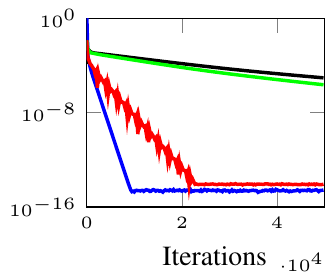}
		\includegraphics[width=0.30\textwidth] {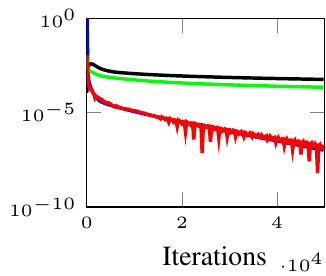}
}
\\ \vspace{-0.4cm}
			\subfigure{
	\includegraphics[width=0.34\textwidth] {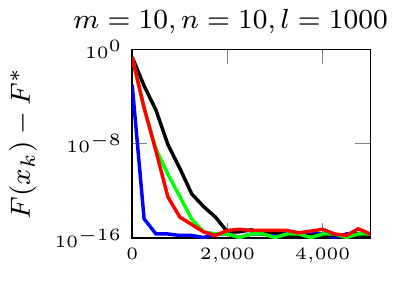}
	\includegraphics[width=0.30\textwidth] {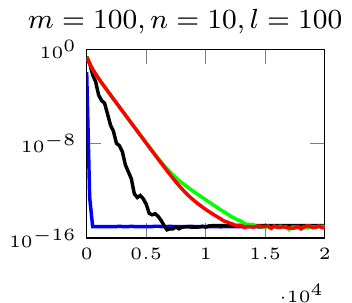}
	\includegraphics[width=0.30\textwidth] {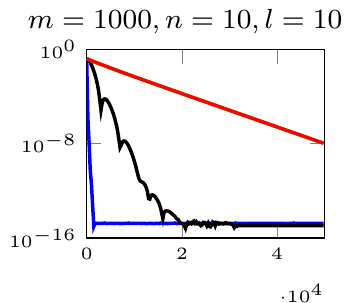}
}\\
\subfigure[Erd\H{o}s-R\'enyi Random Graph]{\addtocounter{subfigure}{-1}
	\includegraphics[width=0.34\textwidth] {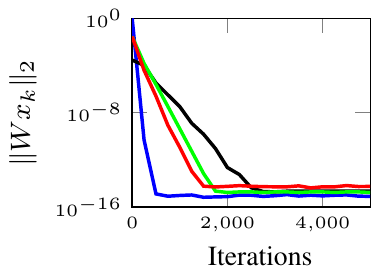}
	\includegraphics[width=0.30\textwidth] {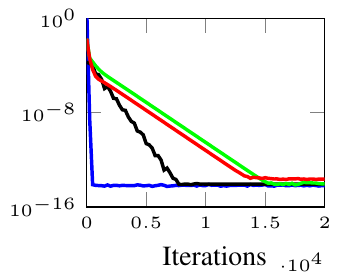}
	\includegraphics[width=0.30\textwidth] {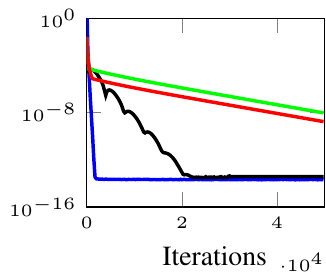}
}
	\caption{Logistic regression on synthetic data over a cycle graph and an Erd\H{o}s-R\'enyi random graph for a total of $10000$ data points and various graph sizes evenly distributing the data points among the agents.}
	\label{fig:logistic}
\end{figure} 

In Figure~\ref{fig:logistic_real}, we use datasets from the library \textsc{LibSVM}~\cite{Chang2011} to compare the performance of Algorithm~\ref{alg:no_friend:case1} as in Figure~\ref{fig:logistic}. We seek to distributedly solve the logistic regression problem over the following datasets: \textsc{a9a}, \textsc{mushrooms}, \textsc{ijcnn1} and \textsc{phishing}. Table~\ref{tab:real_Data} shoes a brief description of the four datasets used. For each problem, we created an Erd\H{o}s-R\'enyi random graph with $100$ agents and evenly distributed the data points among all agents. Algorithm~\ref{alg:no_friend:case1} outperforms the other compared algorithms where the \textsc{Acc-DNGD} having the second best performance following the same scaling patterns as in Figure~\ref{fig:logistic} for Erd\H{o}s-R\'enyi random graphs. The \textsc{{EXTRA}} and \textsc{{DIGing}} algorithms have a worst scaling of their convergence rate as the size of the network increases. 

\begin{table}
		\centering
	\tbl{\textbf{Real Datasets from the \textsc{LibSVM} Library}.}
	{
		\begin{tabular}{cccc} \toprule
			\bf Name & \bf Classes & \bf Data points & \bf Features    \\ \toprule
					\textsc{a9a}  & 2 & 32561 & 123 \\
				 	\textsc{mushrooms} & 2 & 8214 & 112\\
				 	\textsc{ijcnn1}  &	2  & 49990 & 22\\
				 	\textsc{phishing}  & 2& 11055 & 68\\ \botrule
		\end{tabular}
	}
	\label{tab:real_Data}    
\end{table}

\begin{figure}[tbp!]
	\centering
	\includegraphics[width=0.7\textwidth] {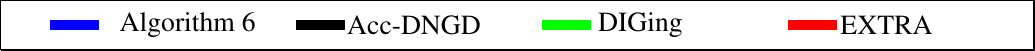}	
 \\
	\subfigure[Distance to Optimality]{\resizebox{\textwidth}{!}{
			\includegraphics[width=0.30\textwidth] {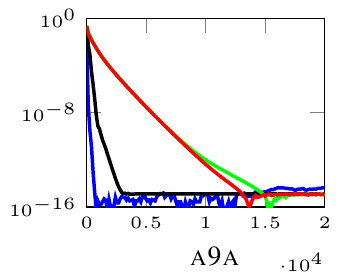}
			\includegraphics[width=0.30\textwidth] {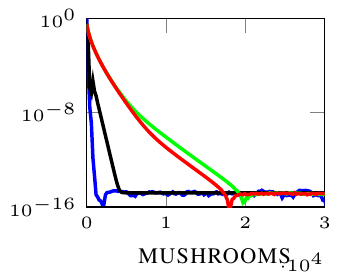}
			\includegraphics[width=0.30\textwidth] {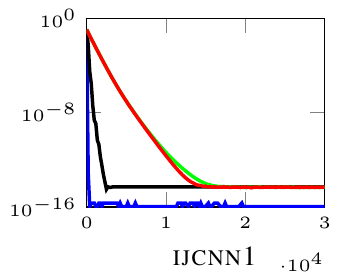}
			\includegraphics[width=0.30\textwidth] {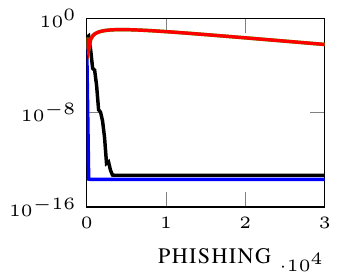}
			}
	}\\
	\subfigure[Distance to Consensus]{\resizebox{\textwidth}{!}{
			\includegraphics[width=0.30\textwidth] {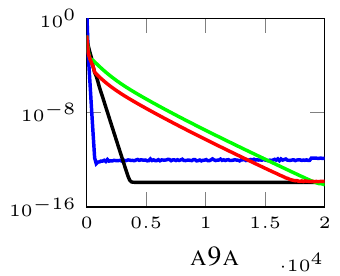}
			\includegraphics[width=0.30\textwidth] {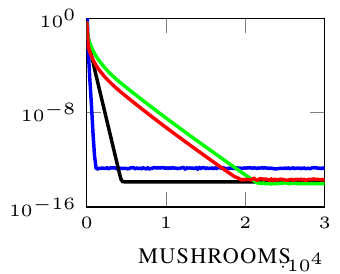}
			\includegraphics[width=0.30\textwidth] {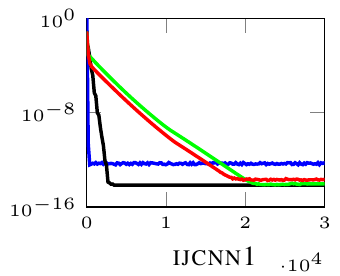}
			\includegraphics[width=0.30\textwidth] {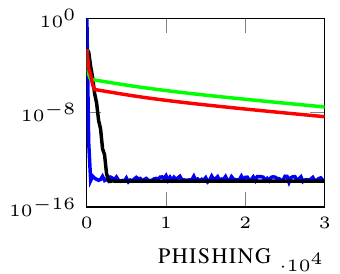}
	}
	}
	\caption{Logistic regression results with data from the \textsc{LibSVM} Library on an Erd\H{o}s-R\'enyi random graph with $100$ agents. }
	\label{fig:logistic_real}
\end{figure}

	\section{Conclusions}\label{sec:conclusions}

We have provided convergence rate estimates for the solution of convex optimization problems in a distributed manner for dual-friendly functions. The provided complexity bounds depend explicitly on the properties of the function to be optimized. If $F(x)$ is smooth, then our estimates are optimal up to logarithmic factors otherwise our estimates are optimal up to constant factors (in terms of t the number of communication rounds). The inclusion of the graph properties in terms of $\sqrt{\chi(W)}$ shows the additional price to be paid in contrast with classical (centralized/non-distributed) optimal estimates. The authors recognize that the proposed algorithms required, to some extent, global knowledge about the graph properties and the condition number of the network function. Nevertheless, our aim was to provide a theoretical foundation for the performance limits of the distributed algorithms. The cases where global information is not available require additional study. 

The proposed algorithms and their corresponding analysis use Nesterov's smoothing to induce strong-convexity either in the primal or dual problems. However, as noted in~\cite{tra15}, Nesterov's smoothing has some practical limitations, e.g., it requires explicit knowledge of the norms $R$ or $R_x$, and the strong convexity term depends on the desired accuracy $\varepsilon$, which in turn may induce slow convergence rates. The study of distributed adaptive methods~\cite{tra15} requires further work.

One can further extend our results and obtain the same rates of converge when the graphs change with time by using restarting techniques \cite{fer16,ban17}. Nevertheless, we require additional assumptions. Particularly, the network changes should not happen often and nodes must be able to detect when these changes occur. The condition number of the sequence of graphs $\chi(W_k)$ then is the worst one among all the graphs in the execution of the algorithm~\cite{rogozin2018optimal}. Additionally, it is still an open research question whether these optimal convergence rates can be achieved over \textit{directed} networks~\cite{maros2018panda}.
	
	\section{Acknowledgments}
	
	The authors would like to thank the Lund Center for Control of Complex Engineering Systems (LCCC) at Lund University, particularly Anders Rantzer and Pontus Giselsson for organizing the 2017 LCCC Focus Period on Large-Scale and Distributed Optimization from which most of the ideas contained in this paper were initially discussed. \ag{We'd also like to thank Thinh T. Doan, who made a lot very useful comments on the initial version of this text. This comments allows to repairs significant misprints.} 
	
	\section{Funding}
	
	The work of A. Nedi\'c and C.A. Uribe is supported by the National Science Foundation under grant no. CPS 15-44953. The work of A. Gasnikov was supported by RFBR 18-29-03071 mk. \ag{The work of C.A.Uribe, S. Lee, and A. Gasnikov was also partially supported by the Yahoo! Research Faculty Engagement Program.}

	\bibliographystyle{gOMS}
	\bibliography{all_refs3}
	
\end{document}